\definecolor{refkey}{rgb}{0.9451,0.2706,0.4941}
\definecolor{labelkey}{rgb}{0.9451,0.2706,0.4941}
\definecolor{mygreen}{rgb}{0,0.7,0.3}
\definecolor{myblue}{rgb}{0,0.50,1.20}
\definecolor{myorange}{rgb}{1,0.5,0.1}
\numberwithin{equation}{section}
\crefname{thm}{Theorem}{Theorems}
\crefname{cor}{Corollary}{Corollaries}
\crefname{lem}{Lemma}{Lemmas}
\crefname{sublem}{Sublemma}{Sublemmas}
\crefname{prop}{Proposition}{Propositions}
\crefname{dfn}{Definition}{Definitions}
\crefname{defi}{Definition}{Definitions}
\crefname{ex}{Example}{Examples}
\crefname{claim}{Claim}{Claims}
\crefname{conj}{Conjecture}{Conjectures}
\crefname{conv}{Notation}{Notations}
\crefname{rem}{Remark}{Remarks}
\crefname{rmk}{Remark}{Remarks}
\crefname{prob}{Problem}{Problems}
\crefname{figure}{Figure}{Figures}
\crefname{table}{Table}{Tables}
\crefname{section}{Section}{Sections}
\crefname{subsection}{Section}{Sections}
\crefname{appendix}{Appendix}{Appendices}
\crefname{introthm}{Theorem}{Theorems}
\crefname{introcor}{Corollary}{Corollaries}
\crefname{introconj}{Conjecture}{Conjectures}
\newtheorem{thm}{Theorem}[section]
\newtheorem{prop}[thm]{Proposition}
\newtheorem{cor}[thm]{Corollary}
\newtheorem{lem}[thm]{Lemma}
\newtheorem{introthm}{Theorem}
\theoremstyle{definition}
\newtheorem{dfn}[thm]{Definition}
\newtheorem{ex}[thm]{Example}
\newtheorem{conv}[thm]{Notation}
\theoremstyle{remark}
\newtheorem{rem}[thm]{Remark}
\newcommand{\corrected}
{\textcolor{red}{(corrected)}}
\newcommand{\ot}{\leftarrow}
\newcommand*{\chom}{\mathcal{H}\kern -.5pt om}
\newcommand{\bZ}{\mathbb{Z}}
\newcommand{\bQ}{\mathbb{Q}}
\newcommand{\bR}{\mathbb{R}}
\newcommand{\bC}{\mathbb{C}}
\newcommand{\bT}{\mathbb{T}}
\newcommand{\bM}{\mathbb{M}}
\newcommand{\bP}{\mathbb{M}_\circ}
\newcommand{\bG}{\mathbb{G}}
\newcommand{\bA}{\mathbb{A}}
\newcommand{\bB}{\mathbb{B}}
\newcommand{\bE}{\mathbb{E}}
\newcommand{\bs}{{\boldsymbol{s}}}
\newcommand{\A}{\mathcal{A}}
\newcommand{\cF}{\mathcal{F}}
\newcommand{\cL}{\mathcal{L}}
\newcommand{\cO}{\mathcal{O}}
\def\P{{\mathcal{P}}}
\newcommand{\cU}{\mathcal{U}}
\newcommand{\cW}{\mathcal{W}}
\newcommand{\X}{\mathcal{X}}
\newcommand{\cX}{\mathcal{X}}
\newcommand{\cZ}{\mathcal{Z}}
\newcommand{\sfa}{\mathsf{a}}
\newcommand{\sfx}{\mathsf{x}}
\newcommand{\Hom}{\mathrm{Hom}}
\newcommand{\tri}{\triangle}
\newcommand{\sgn}{\mathrm{sgn}}
\newcommand{\tr}{\mathsf{T}}
\newcommand{\bExch}{\bE \mathrm{xch}}
\newcommand{\uf}{\mathrm{uf}}
\newcommand{\f}{\mathrm{f}}
\newcommand{\fsl}{\mathfrak{sl}}
\renewcommand{\cL}{\mathcal{L}_{\fsl_3}}
\newcommand{\ve}{\varepsilon}
\newcommand{\sfs}{{\mathsf{s}}}
\newcommand{\hL}{\widehat{L}}
\DeclareMathOperator{\coker}{\mathrm{coker}}
\DeclareMathOperator{\interior}{\mathrm{int}}
\newcommand{\oset}[3][0ex]{%
  \mathrel{\mathop{#3}\limits^{
    \vbox to#1{\kern-2\ex@
    \hbox{$\scriptstyle#2$}\vss}}}}
\newcommand{\osetnear}[3][0ex]{%
  \mathrel{\mathop{#3}\limits^{
    \vbox to#1{\kern-.3\ex@
    \hbox{$\scriptstyle#2$}\vss}}}}
\newcommand\qarrow[2]{\draw[-latex,shorten >=2pt,shorten <=2pt] (#1) -- (#2) [thick];} 
\newcommand\qdlarrow[2]{\draw[-latex,dashed,shorten >=2pt,shorten <=2pt,bend left=0.5cm] (#1) to (#2) [thick];} 
\newcommand{\uniarrow}[3]{\draw[-latex,thick,#3] (#1) to (#2);}
\def\centerarc(#1)(#2:#3:#4)
\tikzset{
  mid arrow/.style={postaction={decorate,decoration={
        markings,
        mark=at position .5 with {\arrow[#1]{stealth}}
      }}},
}
\tikzset{
    partial ellipse/.style args={#1:#2:#3}{
        insert path={+ (#1:#3) arc (#1:#2:#3)}
    }
}
\newcommand{\quiverplus}[3]{
\begin{scope}[>=latex]
{\color{mygreen}
    \path(#1) coordinate(x1);
    \path(#2) coordinate(x2);
    \path(#3) coordinate(x3);
    \foreach \l in {1,2}
    {
        \draw($(x1)!0.333*\l!(x2)$) circle(2pt) coordinate(x12\l);
        \draw($(x2)!0.333*\l!(x3)$) circle(2pt) coordinate(x23\l);
        \draw($(x3)!0.333*\l!(x1)$) circle(2pt) coordinate(x31\l);
    }
    \path($(x1)!0.5!(x2)$) coordinate(H);
    \draw($(x3)!0.667!(H)$) circle(2pt) coordinate(G);
    \qarrow{x121}{G}
    \qarrow{x231}{G}
    \qarrow{x311}{G}
    \qarrow{G}{x122}
    \qarrow{G}{x232}
    \qarrow{G}{x312}
    \qarrow{x312}{x121}
    \qarrow{x122}{x231}
    \qarrow{x232}{x311}
}
\end{scope}
}
\newcommand{\quiversquare}[4]{
{\color{mygreen}
    \path(#1) coordinate(x1);
    \path(#2) coordinate(x2);
    \path(#3) coordinate(x3);
	\path(#4) coordinate(x4);
    \foreach \l in {1,2}
    {
        \draw($(x1)!0.333*\l!(x2)$) circle(2pt) coordinate(x12\l);
        \draw($(x2)!0.333*\l!(x3)$) circle(2pt) coordinate(x23\l);
        \draw($(x3)!0.333*\l!(x4)$) circle(2pt) coordinate(x34\l);
		\draw($(x4)!0.333*\l!(x1)$) circle(2pt) coordinate(x41\l);
		\draw($(x1)!0.333*\l!(x3)$) circle(2pt) coordinate(x13\l);	
	    \draw($(x2)!0.333*\l!(x4)$) circle(2pt) coordinate(x24\l);
	}
}
}
\newcommand{\CoG}[3]{
    \path(#1) coordinate(x1);
    \path(#2) coordinate(x2);
    \path(#3) coordinate(x3);
    \path($(x1)!0.5!(x2)$) coordinate(H);
    \path($(x3)!0.667!(H)$) circle(2pt) coordinate(G);}
\newcommand{\sink}[3]{
    \CoG{#1}{#2}{#3}
    \draw[red,very thick,->-] (#1) -- (G);
    \draw[red,very thick,->-] (#2) -- (G);
    \draw[red,very thick,->-] (#3) -- (G);
}   
\newcommand{\source}[3]{
    \CoG{#1}{#2}{#3}
    \draw[red,very thick,-<-] (#1) -- (G);
    \draw[red,very thick,-<-] (#2) -- (G);
    \draw[red,very thick,-<-] (#3) -- (G);
}   
\newcommand{\bline}[3]{
    \path (#1)++(0,-#3) coordinate(m1);
    \path (#2)++(0,-#3) coordinate(m2);
    \filldraw[gray!30] (m1) -- (#1) -- (#2) -- (m2) --cycle;
    \draw[thick] (#1) -- (#2);
}
\newcommand{\tline}[3]{
    \path (#1)++(0,#3) coordinate(m1);
    \path (#2)++(0,#3) coordinate(m2);
    \filldraw[gray!30] (m1) -- (#1) -- (#2) -- (m2) --cycle;
    \draw[thick] (#1) -- (#2);
}
\tikzset{->-/.style 2 args={
	postaction={decorate},
	decoration={markings, mark=at position #1 with {\arrow[thick, #2]{>}}} 
    },
    ->-/.default={0.5}{}
}
\tikzset{-<-/.style 2 args={
	postaction={decorate},
	decoration={markings, mark=at position #1 with {\arrow[thick, #2]{<}}} 
    },
    -<-/.default={0.5}{}
}
\tikzset{
	overarc/.style={
		white, double=red, double distance=1.2pt, line width=2.4pt
	}
}
\tikzset{
    squigarrow/.style={-{Classical TikZ Rightarrow[length=4pt]}, decorate, decoration={snake, amplitude=1.8pt, pre length=2pt, post length=3pt}}
}
\title[Unbounded $\mathfrak{sl}_3$-laminations around punctures]{Unbounded $\mathfrak{sl}_3$-laminations around punctures}
\author[Tsukasa Ishibashi]{Tsukasa Ishibashi}
\address{Tsukasa Ishibashi, Mathematical Institute, Tohoku University, 
6-3 Aoba, Aramaki, Aoba-ku, Sendai, Miyagi 980-8578, Japan.}
\email{tsukasa.ishibashi.a6@tohoku.ac.jp}
\author[Shunsuke Kano]{Shunsuke Kano}
\address{Shunsuke Kano, Mathematical Science Center for Co-Creative Society, Tohoku University,
6-3 Aoba, Aramaki, Aoba-ku, Sendai, Miyagi 980-8578, Japan.}
\email{s.kano@tohoku.ac.jp}
\date{\today}
\begin{document}

\begin{abstract}
We continue to study the unbounded $\mathfrak{sl}_3$-laminations \cite{IK22}, with a focus on their structures at punctures. A key ingredient is their relation to the root data of $\mathfrak{sl}_3$. 
After giving a classification of signed $\mathfrak{sl}_3$-webs around a puncture, we describe the tropicalization of the Goncharov--Shen's Weyl group action in detail. We also clarify the relationship with several other approaches by Shen--Sun--Weng \cite{SSW} and Fraser--Pylyavskyy \cite{FP21}. Finally, we discuss a formulation of unbounded $\mathfrak{g}$-laminations for a general semisimple Lie algebra $\mathfrak{g}$ in brief. 
\end{abstract}

\maketitle

\setcounter{tocdepth}{1}
\tableofcontents

\section{Introduction}\label{sec:intro}
We continue to study the rational unbounded $\fsl_3$-laminations introduced in \cite{IK22}, especially focusing on their structure around punctures. Recall from the previous work that we have introduced the space $\cL^x(\Sigma,\bQ)$ of \emph{rational unbounded $\fsl_3$-laminations}, which is equipped with the \emph{shear coordinates} $\sfx_\tri: \cL^x(\Sigma,\bQ) \xrightarrow{\sim} \bQ^{I_\uf(\tri)}$ associated with ideal triangulations $\tri$ of $\Sigma$. This space is related to the Douglas--Sun--Kim's space $\cL^a(\Sigma,\bQ)$ of rational bounded $\fsl_3$-laminations \cite{DS20I,Kim21} via the \emph{ensemble map} $p: \cL^a(\Sigma,\bQ) \to \cL^x(\Sigma,\bQ)$. The integral points $\cL^x(\Sigma,\bZ)$ are represented by \emph{signed $\fsl_3$-webs}, which are $\fsl_3$-webs of Kuperberg \cite{Kuperberg} equipped with a sign on each end incident to a puncture. 

In this paper, we study the following structures:
\begin{itemize}
    \item The \emph{lamination exact sequence}
    \begin{align}\label{introeq:lam_exact}
    0 \to (\mathsf{Q}^\vee_\bQ)^{\bM} \to \cL^a(\Sigma,\bQ) \xrightarrow{p} \cL^x(\Sigma,\bQ) \xrightarrow{\theta} (\mathsf{P}^\vee_\bQ)^{\bP} \to 0,
    \end{align}
    which describes the kernel and the cokernel of the ensemble map. Here $\mathsf{Q}^\vee$ and $\mathsf{P}^\vee$ denote the coroot and coweight lattices of $\fsl_3$, respectively, and the subscript $\bQ$ stands for tensoring $\bQ$. The sequence \eqref{introeq:lam_exact} leads to a description of the $\fsl_3$-laminations around punctures by the root data of $\fsl_3$. 
    \item The action of the Weyl group $W(\fsl_3)$ on $\cL^x(\Sigma,\bQ)$ associated with each puncture. This is defined so that it gives the tropicalization of the Weyl group action on the moduli space $\X_{PGL_3,\Sigma}$ of framed $PGL_3$-local systems studied by Goncharov--Shen \cite{GS18}.
\end{itemize}

\subsection{Lamination exact sequence}
The key construction in the sequence \eqref{introeq:lam_exact} is the definition of the \emph{tropicalized Casimir map} $\theta=(\theta_p)_{p \in \bP}: \cL^x(\Sigma,\bQ) \to (\mathsf{P}^\vee_\bQ)^{\bP}$, which is introduced as the tropical analogue of the positive map $\Theta_p: \X_{PGL_3,\Sigma} \to H^{\bP}$ taking the semisimple part of the monodromy of $PGL_3$-local systems around each puncture $p$. The tropicalized Casimir map assigns a coweight $\theta_p(W)$ to any signed web $W$, and it turns out that it can be defined as the sum of the local contributions from each end of $W$ incident to $p$, as illustrated below:
\begin{align*}
\begin{tikzpicture}[scale=0.8]
    \draw[dashed, fill=white] (0,0) circle [radius=1];
    \draw[red,very thick,->-] (0,1) -- (0,0);
    \filldraw[fill=white](0,0) circle(2.5pt) node[above left,red] {$+$};
    \node at (0,-0.5) {$\varpi^\vee_2$};
    \begin{scope}[xshift=3cm]
    \draw[dashed, fill=white] (0,0) circle [radius=1];
    \draw[red,very thick,->-] (0,1) -- (0,0);
    \filldraw[fill=white](0,0) circle(2.5pt) node[above left,red] {$-$};
    \node at (0,-0.5) {$-\varpi^\vee_1$};
    \end{scope}
    \begin{scope}[xshift=6cm]
    \draw[dashed, fill=white] (0,0) circle [radius=1];
    \draw[red,very thick,-<-] (0,1) -- (0,0);
    \filldraw[fill=white](0,0) circle(2.5pt) node[above left,red] {$+$};
    \node at (0,-0.5) {$\varpi^\vee_1$};
    \end{scope}
    \begin{scope}[xshift=9cm]
    \draw[dashed, fill=white] (0,0) circle [radius=1];
    \draw[red,very thick,-<-] (0,1) -- (0,0);
    \filldraw[fill=white](0,0) circle(2.5pt) node[above left,red] {$-$};
    \node at (0,-0.5) {$-\varpi^\vee_2$};
    \end{scope}
\end{tikzpicture}
\end{align*}
On the `dual' side, the map $(\mathsf{Q}^\vee_\bQ)^{\bM} \to \cL^a(\Sigma,\bQ)$ is given by the action at each puncture adding/removing peripheral components. 

Here the reader is reminded that in the $\fsl_2$-case, the structure of unbounded laminations around a puncture is controlled by a single sign $\{+,-\}$ (or the `tag' in the sense of \cite{FST}) at each puncture, which corresponds to the root system $\{\alpha,-\alpha\}$ of $\fsl_2$. 

Recall that we have the \emph{cluster exact sequence} for a general cluster ensemble $(\A,\X)$ \cite{FG09} (see \cref{sec:appendix} for a brief review). 
Here is our first statement, which is obtained by describing \eqref{introeq:lam_exact} in terms of the cluster coordinates:

\begin{introthm}[\cref{thm:cluster_exact_sequence}]
The lamination exact sequence \eqref{introeq:lam_exact} coincides with the cluster exact sequence \eqref{eq:cluster_exact_seq_sl3}. In particular, it is equivariant under the action of the cluster modular group $\Gamma_{\fsl_3,\Sigma}$.
\end{introthm}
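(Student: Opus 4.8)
The plan is to prove the statement by writing both exact sequences in the cluster coordinates attached to a fixed ideal triangulation $\tri$, and then matching them term by term. Recall that the shear coordinates give an isomorphism $\sfx_\tri\colon \cL^x(\Sigma,\bQ) \xrightarrow{\sim} \bQ^{I_\uf(\tri)}$, while the analogous coordinates on the $\cA$-side yield $\cL^a(\Sigma,\bQ) \cong \bQ^{I(\tri)}$. Under these identifications the ensemble map $p$ becomes the linear map $\bQ^{I(\tri)} \to \bQ^{I_\uf(\tri)}$ determined by the exchange matrix $\varepsilon$ of the $\fsl_3$-quiver associated with $\tri$ (tropically, $x_i = \sum_j \varepsilon_{ij}\, a_j$), and the cluster exact sequence \eqref{eq:cluster_exact_seq_sl3} reads
\[ 0 \to \ker p \to \bQ^{I(\tri)} \xrightarrow{p} \bQ^{I_\uf(\tri)} \to \coker p \to 0. \]
Thus the two middle terms and the middle map already coincide with those of \eqref{introeq:lam_exact}, and it remains only to identify the two outer terms together with the outer maps.

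For the kernel, I would compute $\ker p$ directly from the combinatorics of the $\fsl_3$-quiver, that is, the hexagonal pattern inside each triangle glued along the edges. The claim is that the peripheral classes furnish a basis: a loop encircling a puncture (or an arc parallel to the boundary), weighted by a simple coroot, carries no shear and hence lies in $\ker p$, and the two simple coroots of $\fsl_3$ produce two independent such classes at each marked point. I would show that these classes span $\ker p$ and are linearly independent, thereby realizing the map $(\mathsf{Q}^\vee_\bQ)^{\bM} \to \cL^a(\Sigma,\bQ)$ of \eqref{introeq:lam_exact} (adding and removing peripheral components) as an isomorphism onto $\ker p$. The rank identity
\[ \dim \ker p - \dim \coker p = |I(\tri)| - |I_\uf(\tri)| = |I_\f(\tri)| \]
provides a consistency check against $2|\bM| - 2|\bP|$, matching the peripheral contributions around punctures against those along the boundary.

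For the cokernel, I would use the local-contribution description of the tropicalized Casimir map $\theta$, which reads off a coweight at each puncture from the signs and orientations of the incident web-ends, as in the four local models $\pm\varpi^\vee_1, \pm\varpi^\vee_2$ displayed above. The points to verify are that $\theta$ annihilates $\im p$, so that it descends to a map $\coker p \to (\mathsf{P}^\vee_\bQ)^{\bP}$, and that this induced map is an isomorphism; the latter reduces to a local computation at each puncture, where the rank-two coweight lattice $\mathsf{P}^\vee$ appears as the cokernel of the local block of $\varepsilon$ attached to the vertices surrounding that puncture. Exactness in the middle, namely $\im p = \ker\theta$, is then guaranteed once these two identifications are in place, since \eqref{introeq:lam_exact} is exact.

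Finally, equivariance under $\Gamma_{\fsl_3,\Sigma}$ follows formally once the sequences coincide: the cluster exact sequence is natural with respect to seed mutations and seed isomorphisms, hence is preserved by the cluster modular group, while $\Gamma_{\fsl_3,\Sigma}$ acts on the root-data terms $(\mathsf{Q}^\vee_\bQ)^{\bM}$ and $(\mathsf{P}^\vee_\bQ)^{\bP}$ through its induced permutation of the marked points and punctures. I expect the main obstacle to be the explicit determination of $\ker p$ and $\coker p$ and their matching with the coroot and coweight lattices: one must track the signs and orientations in the $\fsl_3$-quiver carefully, and reconcile the topological rank counts (Euler-characteristic bookkeeping, the number of frozen indices, boundary versus interior marked points) with the rank-two root data of $\fsl_3$ attached to each puncture.
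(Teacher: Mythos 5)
Your proposal follows essentially the same route as the paper: fix a triangulation, identify the middle terms and the ensemble map via the tropical cluster coordinates, identify the kernel with the peripheral/coroot classes and the cokernel with the coweight lattices via the tropicalized Casimir map, and deduce equivariance from the naturality of the cluster exact sequence. The two verifications you defer are exactly where the paper's work lies — namely that the peripheral $\mathsf{Q}^\vee_\bQ$-action matches the explicit $H_\A$-action \eqref{eq:action_flow_form} (via the coordinates of peripheral components in \cref{fig:peripheral_coordinates}) and that $\theta_p$ coincides with the cluster Casimir map (checked on the elementary braids of \cref{prop:classification} in \cref{prop:Casimir}).
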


\subsection{Weyl group action}
The upshot of this paper is the correct definition of the Weyl group action $W(\fsl_3)$ on $\cL^x(\Sigma,\bQ)$. It should be defined so that 
\begin{enumerate}
    \item its coordinate expression is the tropical analogue of that of the Goncharov--Shen's action of $W(\fsl_3)$ on $\X_{PGL_3,\Sigma}$;
    \item it replaces each end of a given signed web $W$ according to some local rule;
    \item it makes the tropicalized Casimir map $\theta_p: \cL^x(\Sigma,\bQ) \to \mathsf{P}^\vee_\bQ$ equivariant, where the action on the target is the usual one on the coweight lattice.
\end{enumerate}
The requirements (2) and (3) uniquely determine the Weyl group action, as follows: 
\begin{align}
    \begin{aligned}\label{eq:local_rule1}
    r_{p,1}:\quad 
    &\tikz[baseline=-2pt, scale=.8]
    {\draw[dashed, fill=white] (0,0) circle [radius=1];
    \draw[red,thick,->-] (0,1) -- (0,0);
    \filldraw[fill=white](0,0) circle(2.5pt) node[above left,red,scale=0.8] {$+$};
    \draw[thick,|->] (1.5,0) -- (2.5,0);
    \begin{scope}[xshift=4cm]
    \draw[dashed, fill=white] (0,0) circle [radius=1];\draw[red,thick,->-] (0,1) -- (0,0);
    \filldraw[fill=white](0,0) circle(2.5pt) node[above left,red,scale=0.8] {$+$};
    \end{scope}
    }\ , \quad
    \tikz[baseline=-2pt, scale=.8]{
    \draw[dashed, fill=white] (0,0) circle [radius=1];
    \draw[red,thick,->-] (0,1) -- (0,0);
    \filldraw[fill=white](0,0) circle(2.5pt) node[above left,red,scale=0.8] {$-$};
    \draw[thick,|->] (1.5,0) -- (2.5,0);
    \begin{scope}[xshift=4cm]
    \draw[dashed, fill=white] (0,0) circle [radius=1];
    \draw[red,thick,->-={0.7}{}] (0,1) -- (0,0.6);
    \draw[red,thick,-<-] (0,0.6) arc(90:270:0.3);\draw[red,thick,-<-] (0,0.6) arc(90:-90:0.3);
    \filldraw[fill=white](0,0) circle(2.5pt) node[left=0.3em,red,scale=0.8] {$-$};\filldraw[fill=white](0,0) circle(2.5pt) node[right=0.3em,red,scale=0.8] {$+$};
    \end{scope}
    }\ ,\\[2mm]
    &\tikz[baseline=-2pt, scale=.8]{
    \draw[dashed, fill=white] (0,0) circle [radius=1];\draw[red,thick,-<-] (0,1) -- (0,0);
    \filldraw[fill=white](0,0) circle(2.5pt) node[above left,red,scale=0.8] {$+$};
    \draw[thick,|->] (1.5,0) -- (2.5,0);
    \begin{scope}[xshift=4cm]
    \draw[dashed, fill=white] (0,0) circle [radius=1];
    \draw[red,thick,-<-={0.7}{}] (0,1) -- (0,0.6);
    \draw[red,thick,->-] (0,0.6) arc(90:270:0.3);
    \draw[red,thick,->-] (0,0.6) arc(90:-90:0.3);
    \filldraw[fill=white](0,0) circle(2.5pt) node[left=0.3em,red,scale=0.8] {$-$};
    \filldraw[fill=white](0,0) circle(2.5pt) node[right=0.3em,red,scale=0.8] {$+$};
    \end{scope}
    }\ , \quad 
    \tikz[baseline=-2pt, scale=.8]{\draw[dashed, fill=white] (0,0) circle [radius=1];\draw[red,thick,-<-] (0,1) -- (0,0);
    \filldraw[fill=white](0,0) circle(2.5pt) node[above left,red,scale=0.8] {$-$};
    \draw[thick,|->] (1.5,0) -- (2.5,0);
    \begin{scope}[xshift=4cm]
    \draw[dashed, fill=white] (0,0) circle [radius=1];
    \draw[red,thick,-<-] (0,1) -- (0,0);
    \filldraw[fill=white](0,0) circle(2.5pt) node[above left,red,scale=0.8] {$-$};
    \end{scope}
    }\ .
    \end{aligned}
\end{align}

\begin{align}
    \begin{aligned}\label{eq:local_rule2}
    r_{p,2}:\quad
    &\tikz[baseline=-2pt, scale=.8]{
    \draw[dashed, fill=white] (0,0) circle [radius=1];\draw[red,thick,->-] (0,1) -- (0,0);
    \filldraw[fill=white](0,0) circle(2.5pt) node[above left,red,scale=0.8] {$+$};
    \draw[thick,|->] (1.5,0) -- (2.5,0);
    \begin{scope}[xshift=4cm]
    \draw[dashed, fill=white] (0,0) circle [radius=1];\draw[red,thick,->-={0.7}{}] (0,1) -- (0,0.6);\draw[red,thick,-<-] (0,0.6) arc(90:270:0.3);
    \draw[red,thick,-<-] (0,0.6) arc(90:-90:0.3);
    \filldraw[fill=white](0,0) circle(2.5pt) node[left=0.3em,red,scale=0.8] {$-$};\filldraw[fill=white](0,0) circle(2.5pt) node[right=0.3em,red,scale=0.8] {$+$};    
    \end{scope}
    }\ , \quad
    \tikz[baseline=-2pt, scale=.8]{
    \draw[dashed, fill=white] (0,0) circle [radius=1];
    \draw[red,thick,->-] (0,1) -- (0,0);
    \filldraw[fill=white](0,0) circle(2.5pt) node[above left,red,scale=0.8] {$-$};
    \draw[thick,|->] (1.5,0) -- (2.5,0);
    \begin{scope}[xshift=4cm]
    \draw[dashed, fill=white] (0,0) circle [radius=1];\draw[red,thick,->-] (0,1) -- (0,0);
    \filldraw[fill=white](0,0) circle(2.5pt) node[above left,red,scale=0.8] {$-$};
    \end{scope}
    }\ ,\\[2mm]
    &\tikz[baseline=-2pt, scale=.8]{
    \draw[dashed, fill=white] (0,0) circle [radius=1];
    \draw[red,thick,-<-] (0,1) -- (0,0);
    \filldraw[fill=white](0,0) circle(2.5pt) node[above left,red,scale=0.8] {$+$};
    \draw[thick,|->] (1.5,0) -- (2.5,0);
    \begin{scope}[xshift=4cm]    
    \draw[dashed, fill=white] (0,0) circle [radius=1];\draw[red,thick,-<-] (0,1) -- (0,0);
    \filldraw[fill=white](0,0) circle(2.5pt) node[above left,red,scale=0.8] {$+$};
    \end{scope}
    }\ , \quad 
    \tikz[baseline=-2pt, scale=.8]{
    \draw[dashed, fill=white] (0,0) circle [radius=1];
    \draw[red,thick,-<-] (0,1) -- (0,0);
    \filldraw[fill=white](0,0) circle(2.5pt) node[above left,red,scale=0.8] {$-$};
    \draw[thick,|->] (1.5,0) -- (2.5,0);
    \begin{scope}[xshift=4cm]
    \draw[dashed, fill=white] (0,0) circle [radius=1];\draw[red,thick,-<-={0.7}{}] (0,1) -- (0,0.6);\draw[red,thick,->-] (0,0.6) arc(90:270:0.3);\draw[red,thick,->-] (0,0.6) arc(90:-90:0.3);
    \filldraw[fill=white](0,0) circle(2.5pt) node[left=0.3em,red,scale=0.8] {$-$};\filldraw[fill=white](0,0) circle(2.5pt) node[right=0.3em,red,scale=0.8] {$+$};
    \end{scope}
    }\ .
    \end{aligned}
\end{align}

However, it turns out that such an action produces \emph{resolvable pairs} of ends at a puncture (the right-most one in \eqref{eq:puncture-admissible}) that we have prohibited in the previous paper \cite{IK22}. 
The actual problem is the following: as its name suggests, such a resolvable pair can be ``resolved'' into an arc away from puncture without changing its shear coordinates (\cref{def:resolution_move} Step 0). However, such an operation in turn produces possible intersections with other parts of the web, which are also prohibited. 
In view of the relation to the skein algebra \cite[Section 5]{IK22}, it would be natural to further ``resolve'' these additional intersections using a kind of skein relation. 

These observations lead us to a detailed study of resolution operations of the resolvable pairs and the additional intersections, without affecting the shear coordinates (\cref{subsec:Weyl_action}). Such an operation would be of independent interest, in view of the expectation that the signed webs should parametrize a basis of a certain skein algebra and that the equivalence relations among them pick up the ``highest terms" in some sense from appropriate skein relations. 

Eventually, we can resolve all such prohibited things to get a usual signed web. Then we obtain:
\begin{introthm}[{\cref{lem:Weyl_welldefined,thm:Weyl_action}}]
We have a well-defined action of $W(\fsl_3)^{\bM_\circ}$ on $\cL^x(\Sigma,\bQ)$ specified by the local rules in \eqref{eq:local_rule1} and \eqref{eq:local_rule2}. Moreover, it coincides with the tropicalization of the Goncharov--Shen's action \cite{GS18}.     
\end{introthm}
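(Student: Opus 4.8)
The plan is to prove the two referenced results in turn: first that ``apply the local rules \eqref{eq:local_rule1}, \eqref{eq:local_rule2} and then resolve'' yields a well-defined $W(\fsl_3)^{\bM_\circ}$-action on $\cL^x(\Sigma,\bQ)$ (\cref{lem:Weyl_welldefined}), and then that this action is the tropicalization of the Goncharov--Shen action (\cref{thm:Weyl_action}). Since $\sfx_\tri$ identifies $\cL^x(\Sigma,\bQ)$ with $\bQ^{I_\uf(\tri)}$ and the integral points $\cL^x(\Sigma,\bZ)$ are exactly the signed webs, I would argue throughout on signed webs, checking at the end that every operation commutes with the $\bQ_{>0}$-scaling so that the piecewise-linear (PL) extension to rational points is forced. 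I would fix a puncture $p$ and choose $\tri$ adapted to a small disk around $p$, so that every surgery takes place inside that disk and only finitely many edge-types of $\tri$ are affected.

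For \cref{lem:Weyl_welldefined} I would proceed in three steps. \emph{(i) Resolution preserves $\sfx_\tri$.} Each local rule sends an end either to itself or to a resolvable pair; resolving the pair (\cref{def:resolution_move} Step~0) pushes it off $p$ into an arc, which can cross existing strands, and those crossings are removed by the skein-type moves of \cref{subsec:Weyl_action}. I would verify move-by-move that each elementary surgery leaves $\sfx_\tri$ unchanged by computing the shear coordinate along every edge meeting the affected region before and after; locality confines this to finitely many patterns. \emph{(ii) Termination and confluence.} I would attach to a prohibited configuration a complexity (number of resolvable pairs, then a count of crossings) that strictly drops under each resolution step, so the process terminates; checking the finitely many overlaps of two resolutions gives local confluence, and Newman's lemma then yields a unique reduced signed web, so the action on $\cL^x(\Sigma,\bZ)$ is well-defined. \emph{(iii) Weyl relations.} Running $r_{p,i}^2$ on each of the four end-types and resolving recovers the original end, giving $r_{p,i}^2=\mathrm{id}$; the braid relation $r_{p,1}r_{p,2}r_{p,1}=r_{p,2}r_{p,1}r_{p,2}$ I would check by evaluating both words on each end-type and comparing reduced outputs. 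Commutativity of the actions at distinct punctures is immediate, as their rules and resolutions have disjoint support, giving the full $W(\fsl_3)^{\bM_\circ}$-action.

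For \cref{thm:Weyl_action} I would exploit the uniqueness noted after requirements~(1)--(3). Tropicalizing the explicit positive cluster transformations defining the Goncharov--Shen action (replacing $+,\times,\div$ by $\max,+,-$) gives a PL self-map $\mathrm{trop}(w_p)$ of $\cL^x(\Sigma,\bQ)=\X_{PGL_3,\Sigma}^{\trop}$, and since the positive map $\Theta_p$ intertwines that action with the linear Weyl action on $H$, its tropicalization makes $\theta_p$ equivariant; thus $\mathrm{trop}(w_p)$ satisfies requirement~(3). Because the Goncharov--Shen action at $p$ is supported near $p$ and acts on the framing there, I would argue that $\mathrm{trop}(w_p)$ fixes the body of a web away from $p$ and alters only its ends, i.e.\ it satisfies requirement~(2). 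As my local-rule action satisfies~(2) by construction and~(3) by the weight bookkeeping used to derive \eqref{eq:local_rule1}, \eqref{eq:local_rule2}, the uniqueness of the action determined by~(2) and~(3) forces the two to coincide on $\cL^x(\Sigma,\bZ)$, and positive homogeneity propagates this to all of $\cL^x(\Sigma,\bQ)$. As a concrete confirmation I would also read the tropical formula for $\mathrm{trop}(r_{p,i})$ off the Goncharov--Shen cluster expression and check, edge by edge, that it reproduces the post-resolution values of $\sfx_\tri$ from my local rule on each end-type.

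The hard part will be steps \emph{(i)}--\emph{(ii)} of well-definedness: controlling the cascade of crossings created when a resolvable pair is pushed off the puncture, and showing the skein-type resolutions both preserve $\sfx_\tri$ and are confluent. The delicate point is that resolving one pair can create new intersections with unrelated strands of $W$, so the invariance computation must be carried out uniformly over all local crossing patterns and confluence checked for every way two resolutions overlap; verifying the braid relation inherits this difficulty, since there the resolutions produced by $r_{p,1}$ and $r_{p,2}$ interact near the same puncture.
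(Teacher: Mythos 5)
Your plan for well-definedness tracks the paper's: the paper proves termination and confluence of the resolution procedure by the same kind of case analysis of overlapping resolvable pairs (\cref{prop:resolution_compatibility}, via \cref{lem:Reidemeister_resolution}), and then checks $r_{p,s}^2=\mathrm{id}$ and the braid relation end by end. One point you gloss over: for $r_{p,1}^2$ the end-wise computation returns the original end \emph{plus} a small oriented loop based at $p$ with signs $(+,-)$, and removing that loop without disturbing the other ends incident to $p$ is not automatic --- it is exactly \cref{lem:peripheral_removal}, whose proof needs its own case analysis. Without it the ``end-wise verification'' of the group relations is not justified.

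The second half has a genuine gap. Requirements (2) and (3) do not uniquely determine the action as a map on $\cL^x(\Sigma,\bQ)$: the tropicalized Casimir map $\theta_p$ only sees the signed ends at $p$, so condition (3) constrains nothing beyond the local replacement rule on ends, and condition (2) says nothing about how the resulting resolvable pairs and self-intersections are to be resolved (the paper stresses that, e.g., the direction of the 4-gon removal is \emph{not} determined by skein considerations but by the spiralling direction). Two different resolution conventions would both satisfy (2) and (3) yet give different shear coordinates, so your uniqueness argument cannot force the coincidence with the tropicalized Goncharov--Shen action; moreover, showing that $\mathrm{trop}(w_p)$ ``only alters the ends'' of a web is essentially the statement being proved. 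The paper instead verifies the coincidence directly, and the key device you are missing is the reduction to elementary braids: the cluster action \eqref{eq:r_2_cluster_action} is only piecewise linear, so checking it on each end-type separately does not imply it for a general web carrying several ends and honeycombs. One needs \cref{lem:decomposition_positive} (sign coherence of the shear coordinates of the elementary braids classified in \cref{prop:classification}) to know that the PL formula is linear on the relevant sum of contributions, after which the finite check on the braids of \cref{fig:Casimir_check} suffices. Your ``edge by edge on each end-type'' confirmation is not valid without this step.
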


\subsection{Relation to the work of Shen--Sun--Weng}
In their recent work \cite{SSW}, Shen--Sun--Weng introduced a pairing between `$\A$-webs' and `$\X$-webs', which is expected to provide a topological construction of a duality pairing as described in \cite[Conjecture 4.3]{FG09}. 
In our terminology, their $\A$-webs correspond to the bounded integral $\fsl_3$-laminations without negative peripheral components. They also conjectured that the $\X$-webs corresponds to a subspace $\P_{PGL_3,\Sigma}^+(\bZ^T) \subset \P_{PGL_3,\Sigma}(\bZ^T)$ cut out by certain conditions \cite[Conjecture 4.10]{SSW}, where the latter denotes the set of integral tropical points of the Goncharov--Shen's moduli space $\P_{PGL_3,\Sigma}$ \cite{GS19}. 
We construct a bijection between the space $\mathscr{W}_\Sigma^\X$ of their $\X$-webs and a certain subspace $\cL^p(\Sigma,\bZ)_0$ of the integral $\P$-laminations \cite[Section 4]{IK22}, and give a proof of their conjecture based on the result \cite[Theorem 4.11]{IK22} in our previous work (together with \cref{prop:Casimir}). 

Since the definition of $\X$-webs involves no signs and pinnings, this relation would serve as a simpler understanding of the subset $\cL^p(\Sigma,\bZ)_0$. 
On the other hand, we certainly need signs (and pinnings) to describe the entire space $\P_{PGL_3,\Sigma}(\bZ^T)$. 
As the duality pairing should be equivariant under the Weyl group actions, our study on the Weyl group action will be a key ingredient to complete their construction to an entire pairing $\A_{PGL_3,\Sigma}(\bZ^T) \times \P_{PGL_3,\Sigma}(\bZ^T) \to \frac 1 3 \bZ$.

\subsection{Relation to the work of Fraser--Pylyavskyy}
In the case where $\Sigma$ has no punctures, we have seen that the unbounded integral $\fsl_3$-laminations give a basis for the (upper) cluster algebra related to the pair $(\fsl_3,\Sigma)$ \cite[Corollary 5.7]{IK22}, which is also the same as the function ring $\cO(\A_{SL_3,\Sigma}^\times)$ of (a certain generic part of) the moduli space of decorated $SL_3$-local systems \cite{FG03,IOS}. It is still natural to expect that our unbounded integral $\fsl_3$-laminations give a basis for one of these algebras in the presence of punctures. 

Fraser--Pylyavskyy \cite{FP21} gave an approach in this direction by representing certain (rational) functions on $\A_{SL_k,\Sigma}$ by \emph{pseudo-tagged diagrams}, which are webs equipped with certain decoration data at ends. In \cref{sec:FP}, we give a comparison between our signed webs and the pseudo-tagged diagrams for $k=3$. 

While the Fraser--Pylyavskyy's pseudo-tagged diagrams directly encodes the representation-theoretic decoration data of $SL_3$-local systems (\emph{i.e.}, a flat section of the associated $SL_3/U$-bundle), our data of signs at punctures would be thought of as a tropical analogue of the \emph{framings} of convex $\mathbb{RP}^2$-structures \cite{FG07c}. 

\subsection{Higher rank generalizations}
Finally, we discuss in brief higher rank generalizations, namely the unbounded $\mathfrak{g}$-laminations associated with any semisimple Lie algebra $\mathfrak{g}$. We propose to use the approach of Murakami--Ohtsuki--Yamada \cite{MOY} and Cautis--Kamnitzer--Morrison \cite{CKM} as a model of $\mathfrak{g}$-webs, and to define a rational unbounded $\mathfrak{g}$-lamination to be a certain equivalence class of the \emph{signed $\mathfrak{g}$-webs} with rational weights. Here, the signed $\mathfrak{g}$-webs are equipped with signs $\{+,-\}$ on their ends just as in the $\fsl_3$-case, and we do not expect any further decoration data needed since the colors by fundamental weights of $\mathfrak{g}$ and these signs are sufficient to exhaust the coweight lattice via the tropicalized Casimir map $\theta_p$ (\cref{fig:weight-contribution-general}). We also discuss their shear coordinates.

\subsection*{Organization of the paper}
Our notations on the marked surfaces and Lie theory are summarized in \cref{sec:notation}. In \cref{sec:lamination}, we recall the definition of unbounded $\fsl_3$-laminations and their shear coordinates.

\cref{sec:ensemble} gives a classification of unbounded $\fsl_3$-laminations around a puncture. In \cref{subsec:cluster_exact_seq}, we describe our constructions in the shear coordinates, and relate them to the cluster exact sequence coming from the structure of cluster ensemble \cite{FG09}. 

\cref{subsec:Weyl_action} is the most involved part, which describes the Weyl group action on the unbounded $\fsl_3$-laminations. Some of the technical statements are proved in \cref{subsec:proof_resolution}. 

In \cref{sec:SSW}, 
we give a proof of the Shen--Sun--Weng's conjecture by using the fact that our tropicalized Casimir map $\theta_p$ is indeed the tropicalization of the positive map $\Theta_p$ on the moduli space $\P_{PGL_3,\Sigma}$. 

In \cref{sec:FP}, we investigate several relations to the work of Fraser--Pylyavskyy. We relate our signed webs with their `pseudo-tagged diagrams'. 
Finally in \cref{sec:higher}, we briefly discuss the higher rank generalizations. 

The following diagram shows the relationships among Sections 2--8.
\[
\begin{tikzcd}[cells={nodes={draw, rounded corners}}, row sep=small, arrows={shorten=1mm}]
2 \ar[r] & 3 \ar[r] \ar[rd] \ar[rdd] & 4 \ar[r] & 5 \ar[rd]\\
&& 6 && 7 \\
&& 8
\end{tikzcd}
\]

\subsection*{Acknowledgements}
The authors are grateful to Wataru Yuasa and Zhe Sun for valuable discussions on the $\fsl_3$-skein relations at punctures. 
T. I. is partially supported by JSPS KAKENHI (20K22304).
S. K. is partially supported by scientific research support of Research Alliance Center for Mathematical Sciences, Tohoku University.
\section{Notation}\label{sec:notation}

\subsection{Marked surfaces and their triangulations}\label{subsec:notation_marked_surface}
A marked surface $(\Sigma,\bM)$ is a compact oriented surface $\Sigma$ together with a fixed non-empty finite set $\bM \subset \Sigma$ of \emph{marked points}. 
When the choice of $\bM$ is clear from the context, we simply denote a marked surface by $\Sigma$. 
A marked point is called a \emph{puncture} if it lies in the interior of $\Sigma$, and a \emph{special point} otherwise. 
Let $\bP$ (resp. $\bM_\partial$) denote the set of punctures (resp. special points), so that $\bM=\bP \sqcup \bM_\partial$. 
Let $\Sigma^*:=\Sigma \setminus \bP$. 
We always assume the following conditions:
\begin{enumerate}
    \item[(S1)] Each boundary component (if exists) has at least one marked point.
    \item[(S2)] $-2\chi(\Sigma^*)+|\bM_\partial| >0$.
    \item[(S3)] $(\Sigma,\bM)$ is not a once-punctured disk with a single special point on the boundary.
\end{enumerate}
 We call a connected component of the punctured boundary $\partial^\ast \Sigma:=\partial\Sigma\setminus \bM_\partial$ a \emph{boundary interval}. The set of boundary intervals is denoted by $\bB$.

Unless otherwise stated, an \emph{isotopy} in a marked surface $(\Sigma,\bM)$ means an ambient isotopy in $\Sigma$ relative to $\bM$, which preserves each boundary interval setwisely. 
An \emph{ideal arc} in $(\Sigma,\bM)$ is an immersed arc in $\Sigma$ with endpoints in $\bM$ which has no self-intersection except possibly for its endpoints, and is not isotopic to one point.

An \emph{ideal triangulation} is a triangulation $\tri$ of $\Sigma$ whose set of $0$-cells (vertices) coincides with $\bM$. 
The conditions (S1) and (S2) ensure the existence of such an ideal triangulation, and the positive integer in (S2) gives the number of $2$-cells (triangles). In this paper, we always consider an ideal triangulation without \emph{self-folded triangles} of the form
\begin{align*}
\begin{tikzpicture}[scale=0.7]
    \draw [blue] (0,0) -- (0,-2);
	\draw [blue] (0,-2) .. controls (-1.3,-0.5) and (-1.3,1) .. (0,1) .. controls (1.3,1) and (1.3,-0.5) .. (0,-2);
    \node [fill, circle, inner sep=1.5pt] at (0,0) {};
    \node [fill, circle, inner sep=1.5pt] at (0,-2) {};
\end{tikzpicture}
\end{align*}
whose existence is ensured by the condition (S3). See, for instance, \cite[Lemma 2.13]{FST}.
For an ideal triangulation $\tri$, denote the set of edges (resp. interior edges, triangles) of $\tri$ by $e(\tri)$ (resp. $e_{\interior}(\tri)$, $t(\tri)$).
Since the boundary intervals belong to any ideal triangulation, $e(\tri)=e_{\interior}(\tri) \sqcup \bB$.
By a computation on the Euler characteristics, we get
\begin{align*}
    &|e(\tri)|=-3\chi(\Sigma^*)+2|\bM_\partial|, \quad |e_{\interior}(\tri)|=-3\chi(\Sigma^*)+|\bM_\partial|, \\
    &|t(\tri)|=-2\chi(\Sigma^*)+|\bM_\partial|.
\end{align*}

\begin{figure}[h]
\begin{tikzpicture}
\draw[blue] (2.5,0) -- (0,2.5) -- (-2.5,0) -- (0,-2.5) --cycle;
\draw[blue, postaction={decorate}, decoration={markings,mark=at position 0.5 with {\arrow[scale=1.5,>=stealth]{>}}}] (0,-2.5) -- node[midway,right,scale=0.8] {$E$} (0,2.5);
\quiversquare{0,-2.5}{2.5,0}{0,2.5}{-2.5,0};
\draw(x131) node[mygreen, right=0.2em,scale=0.9]{$i^1(E)$};
\draw(x132) node[mygreen, left=0.2em,scale=0.9]{$i^2(E)$};
\end{tikzpicture}
    \caption{The set $I(\tri)$ of distinguished points.}
    \label{fig:sl3_triangulation}
\end{figure}
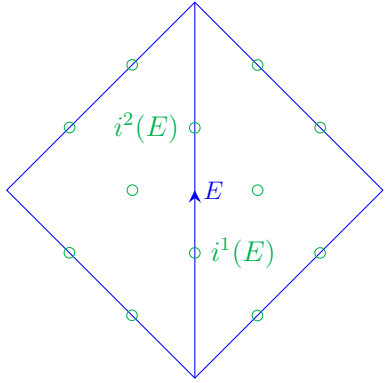

To indicate the $\fsl_3$-structure, it is useful to equip $\tri$ with two distinguished points on the interior of each edge and one point in the interior of each triangle, as shown in \cref{fig:sl3_triangulation}. The set of such points is denoted by $I(\tri)=I_{\mathfrak{sl}_3}(\tri)$. This set will give the vertex set of the quiver $Q^\tri$ associated with $\tri$: see \cref{subsec:cluster_sl3}. 
Let $I^{\mathrm{edge}}(\tri)$ (resp. $I^{\mathrm{tri}}(\tri)$) denote the set of points on the interiors of edges (resp. triangles) so that $I(\tri)=I^{\mathrm{edge}}(\tri) \sqcup I^{\mathrm{tri}}(\tri)$, where we have a canonical bijection 
\begin{align*}
    t(\tri) \xrightarrow{\sim} I^{\mathrm{tri}}(\tri), \quad T \mapsto i(T).
\end{align*}
When we need to label the two vertices on an edge $E \in e(\tri)$, we endow it with an orientation. Then let $i^1(E) \in I(\tri)$ (resp. $i^2(E) \in I(\tri)$) denote the vertex closer to the initial (resp. terminal) endpoint of $E$. 
Let $I(\tri)_\f \subset I^\mathrm{edge}(\tri)$ (\lq\lq frozen'') be the subset consisting of the points on the boundary, and let $I(\tri)_\uf:=I(\tri) \setminus I(\tri)_\f$ (\lq\lq unfrozen'').
The numbers
\begin{align*}
    |I(\tri)|= 2|e(\tri)| + |t(\tri)| =-8\chi(\Sigma^*)+5|\bM_\partial|
\end{align*}
and 
\begin{align*}
    |I(\tri)_\uf|= 2|e_{\interior}(\tri)| + |t(\tri)| =-8\chi(\Sigma^*)+3|\bM_\partial|
\end{align*}
will give the dimensions of the PL manifolds $\cL^a(\Sigma,\bR)$ and $\cL^x(\Sigma,\bR)$ respectively, which we will study in this paper. 

\subsection{Notation from Lie theory}\label{subsec:Lie_theory}
For a complex semisimple finite-dimensional Lie algebra $\mathfrak{g}$, choose a Cartan subalgebra $\mathfrak{h}$ and a set of simple roots. Let $S$ be the vertex set of the associated Dynkin diagram. 
Then we have the following lattices:
\begin{itemize}
    \item $\mathsf{Q}= \bigoplus_{s \in S} \bZ \alpha_s \subset \mathfrak{h}^\ast$ (root lattice),
    \item $\mathsf{P}= \bigoplus_{s \in S} \bZ \varpi_s \subset \mathfrak{h}^\ast$ (weight lattice), 
    \item $\mathsf{Q}^\vee= \bigoplus_{s \in S} \bZ \alpha^\vee_s \subset \mathfrak{h}$ (coroot lattice),
    \item $\mathsf{P}^\vee= \bigoplus_{s \in S} \bZ \varpi^\vee_s \subset \mathfrak{h}$ (coweight lattice).
\end{itemize}
Here $\alpha_s$, $\varpi_s$, $\alpha_s^\vee$, $\varpi^\vee_s$ are called the $s$-th simple root, fundamental weight, simple coroot, fundamental coweight, respectively for $s \in S$. We have the relations
\begin{align*}
    \alpha_t= \sum_s C_{st} \varpi_s \quad \mbox{and}\quad \alpha^\vee_t= \sum_s C_{ts} \varpi^\vee_s,
\end{align*}
where $C(\mathfrak{g})=(C_{st})_{s,t \in S}$ is the Cartan matrix of $\mathfrak{g}$. The relations among the lattices above are summarized as follows:
\begin{align*}
    \mathsf{Q}^\vee = \Hom(\mathsf{P},\bZ), \quad \mathsf{P}^\vee = \Hom(\mathsf{Q},\bZ), \quad
    \mathsf{Q} \subset \mathsf{P}, \quad \mathsf{Q}^\vee \subset \mathsf{P}^\vee.
\end{align*}
Here the inclusions are of index $\det C(\mathfrak{g})$. For a lattice $L$, let $L_\bQ:=L\otimes_\bZ \bQ$ denote the associated $\bQ$-vector space. 

Let $G$ (resp.~$G'$) be the simply-connected (resp.~adjoint) algebraic group with the Lie algebra $\mathfrak{g}$. Let $H \subset G$ be a Cartan subgroup, and $H' \subset G'$ be its image under the canonical projection. Then the lattices above are interpreted as (co)character lattices:
\begin{align*}
    &\mathsf{P}=X^\ast(H), & &\mathsf{Q}^\vee=X_\ast(H), \\ 
    &\mathsf{Q}=X^\ast(H'), & &\mathsf{P}^\vee=X_\ast(H').
\end{align*}
Here $X^\ast(T):=\Hom(T,\bG_m)$ and $X_\ast(T):=\Hom(\bG_m,T)$ for a split algebraic torus $T$. The cocharacter lattice $X_\ast(T)$ can be viewed as the tropical analogue of $T$ (cf.~\cite{FG09}). 

\smallskip
\paragraph{\textbf{The $\fsl_3$-case}} The groups are $G=SL_3$ and $G'=PGL_3$. The standard choice of Cartan subgroup is $H:=\{\mathrm{diag}(x,y,z) \mid xyz=1\} \subset SL_3$, leading to $\mathsf{P}=\bZ^3/\bZ(1,1,1)$ and $\mathsf{Q}^\vee=\{(a,b,c) \mid a+b+c=0\} \subset \bZ^3$. 
\begin{itemize}
    \item Simple positive roots:
    \begin{align*}
        \alpha_1=\mathrm{diag}(1,-1,0), \quad 
        \alpha_2=\mathrm{diag}(0,1,-1).
    \end{align*}
    \item Fundamental weights:
    \begin{align*}
        \varpi_1=\mathrm{diag}(2/3,-1/3,-1/3), \quad 
        \varpi_2=\mathrm{diag}(1/3,1/3,-2/3).
    \end{align*}
    \item The Cartan matrix:
    \begin{align*}
        C(\fsl_3)=\begin{pmatrix}  
        2 & -1 \\
        -1& 2
        \end{pmatrix}.
    \end{align*}
\end{itemize}

\section{Unbounded \texorpdfstring{$\fsl_3$}{sl(3)}-laminations}\label{sec:lamination}
\subsection{Definition}
Let us briefly recall the definition of the rational unbounded $\fsl_3$-laminations from \cite[Section 2]{IK22}. 

An $\mathfrak{sl}_3$-web (or simply a \emph{web}) on a marked surface $\Sigma$ is an immersed oriented uni-trivalent graph $W$ on $\Sigma$ such that each univalent vertex lies in $\bP \cup \partial^\ast \Sigma$, and the other part is embedded into $\interior \Sigma^\ast$. Here, the orientation is given so that each trivalent vertex is either a \emph{sink} or a \emph{source}, respectively:
\begin{align*}
    \begin{tikzpicture}[scale=0.8]
        \draw[dashed] (0,0) circle(1cm);
        {\color{red} \sink{-90:1}{30:1}{150:1};}
        \begin{scope}[xshift=5cm]
        \draw[dashed] (0,0) circle(1cm);
        {\color{red} \source{-90:1}{30:1}{150:1};}
        \end{scope}
    \end{tikzpicture}
\end{align*}
We also allow oriented loop components. 
A web is said to be \emph{non-elliptic} if it has none of the following \emph{elliptic faces}:
\begin{align}
    &\begin{tikzpicture}[scale=.8]
		\draw[dashed, fill=white] (0,0) circle [radius=1];
		\draw[very thick, red, fill=pink!60] (0,0) circle [radius=0.4];
	\end{tikzpicture}
	\hspace{2em}
	\begin{tikzpicture}[scale=.8]
		\draw[dashed, fill=white] (0,0) circle [radius=1];
		\draw[very thick, red] (0,-1) -- (0,-0.4);
		\draw[very thick, red] (0,0.4) -- (0,1);
		\draw[very thick, red, fill=pink!60] (0,0) circle [radius=0.4];
	\end{tikzpicture}
	\hspace{2em}
	\begin{tikzpicture}[scale=.8]
		\draw[dashed, fill=white] (0,0) circle [radius=1];
		\draw[very thick, red] (-45:1) -- (-45:0.5);
		\draw[very thick, red] (-135:1) -- (-135:0.5);
		\draw[very thick, red] (45:1) -- (45:0.5);
		\draw[very thick, red] (135:1) -- (135:0.5);
		\draw[very thick, red, fill=pink!60] (45:0.5) -- (135:0.5) -- (225:0.5) -- (315:0.5) -- cycle;
	\end{tikzpicture} \label{eq:elliptic face}\\
	&\begin{tikzpicture}[scale=0.8,baseline=-0.7cm]
		\coordinate (P) at (-0.4,0) {};
		\coordinate (P') at (0.4,0) {};
		\coordinate (C) at (90:0.4) {};
		\draw[very thick, red, fill=pink!60] (P) to[out=north, in=west] (C) to[out=east, in=north] (P');
		\draw[dashed] (1,0) arc (0:180:1cm);
		\bline{-1,0}{1,0}{0.2}
		\draw[fill=black] (-0.7,0) circle(2pt);
		\draw[fill=black] (0.7,0) circle(2pt);
	\end{tikzpicture}
	\hspace{2em}
	\begin{tikzpicture}[scale=0.8,baseline=-0.7cm]
		\coordinate (P) at (-0.4,0) {};
		\coordinate (P') at (0.4,0) {};
		\coordinate (C) at (90:0.4) {};
		\draw[very thick, red, fill=pink!60] (P) to[out=north, in=west] (C) to[out=east, in=north] (P');
		\draw[very thick, red] (C) -- (90:1);
		\draw[dashed] (1,0) arc (0:180:1cm);
		\bline{-1,0}{1,0}{0.2}
		\draw[fill=black] (-0.7,0) circle(2pt);
		\draw[fill=black] (0.7,0) circle(2pt);
	\end{tikzpicture}
	\hspace{2em}
	\begin{tikzpicture}[scale=.8]
		\draw[dashed, fill=white] (0,0) circle [radius=1];
		\draw[red,very thick,fill=pink!60] (0,0.3) circle[radius=0.3]; 
		\draw[fill=white] (0,0) circle(2pt);
	\end{tikzpicture}
	\label{eq:elliptic_face_peripheral}
\end{align}
A web is said to be \emph{bounded} if none of its univalent vertices lie in $\bP$.

A \emph{signed web} is a web on $\Sigma$ together with a sign ($+$ or $-$) assigned to each end incident to a puncture. The following patterns (and their orientation-reversals) of signed ends are called \emph{bad ends}:
\begin{align}\label{eq:puncture-admissible}
\begin{tikzpicture}
\draw[dashed] (0,0) circle(1cm);
\draw[red,very thick,->-] (-1,0) -- (0,0) node[below left]{$-\epsilon$};
\draw[red,very thick,-<-] (1,0) -- (0,0) node[below right]{$\epsilon$};
\filldraw[fill=white](0,0) circle(2pt);
\begin{scope}[xshift=4cm]
\draw[dashed] (0,0) circle(1cm);
\draw[red,very thick,-<-](0,0) arc(-90:90:0.3cm);
\draw[red,very thick,-<-](0,0) arc(-90:-270:0.3cm);
\draw[red,very thick,->-] (0,0.6) -- (0,1);
\node[red] at (-0.4,0) {$\epsilon$};
\node[red] at (0.4,0) {$\epsilon$};
\filldraw[fill=white](0,0) circle(2pt);
\end{scope}
\begin{scope}[xshift=8cm]
\draw[dashed] (0,0) circle(1cm);
\draw[red,very thick,->-](0,0) arc(-90:90:0.3cm);
\draw[red,very thick,->-](0,0) arc(-90:-270:0.3cm);
\draw[red,very thick,-<-] (0,0.6) -- (0,1);
\node[red] at (-0.4,0) {$\epsilon$};
\node[red] at (0.4,0) {$\epsilon$};
\filldraw[fill=white](0,0) circle(2pt);
\end{scope}
\end{tikzpicture}
\end{align}
Here $\epsilon \in \{+,-\}$. The pair of ends of the first type is called the \emph{resolvable pair}. 
A signed web is said to be \emph{admissible} if it has no bad ends. 
Unless otherwise stated, we always assume that the signed webs are admissible (until \cref{subsec:Weyl_action}). 
A bounded web is naturally regarded as a signed web, since we do not need to specify any signs. 


\paragraph{\textbf{Elementary moves of signed webs.}}
We are going to introduce several elementary moves for signed webs. The first two are defined for a web without signs.

\begin{enumerate}
\item[(E1)] Loop parallel-move (a.~k.~a.~\emph{flip move} \cite{FS20} or \emph{global parallel move} \cite{DS20I}):  
\begin{align}\label{eq:loop_parallel-move}
\begin{tikzpicture}[scale=1]
\draw(0,0) -- (2,0);
\draw(0,1.5) -- (2,1.5);
\draw[red,very thick,->-] (0.7,0.75) [partial ellipse=90:-90:0.2cm and 0.75cm];
\draw[red,very thick,dashed] (0.7,0.75) [partial ellipse=-90:-270:0.2cm and 0.75cm];
\draw[red,very thick,-<-] (1.3,0.75) [partial ellipse=90:-90:0.2cm and 0.75cm];
\draw[red,very thick,dashed] (1.3,0.75) [partial ellipse=-90:-270:0.2cm and 0.75cm];
\node[scale=1.5] at (3,0.75) {$\sim$};
\begin{scope}[xshift=4cm]
\draw(0,0) -- (2,0);
\draw(0,1.5) -- (2,1.5);
\draw[red,very thick,-<-] (0.7,0.75) [partial ellipse=90:-90:0.2cm and 0.75cm];
\draw[red,very thick,dashed] (0.7,0.75) [partial ellipse=-90:-270:0.2cm and 0.75cm];
\draw[red,very thick,->-] (1.3,0.75) [partial ellipse=90:-90:0.2cm and 0.75cm];
\draw[red,very thick,dashed] (1.3,0.75) [partial ellipse=-90:-270:0.2cm and 0.75cm];
\end{scope}
\end{tikzpicture}
\end{align}

\item[(E2)] Boundary H-move:
\begin{align}\label{eq:boundary_H-move}
\begin{tikzpicture}
\filldraw[pink!60] (0.4,0) -- (0.4,0.4) -- (-0.4,0.4) -- (-0.4,0) --cycle;
\draw[very thick,red,-<-] (0.4,0) -- (0.4,0.4);
\draw[very thick,red,->-] (-0.4,0) -- (-0.4,0.4);
\draw[very thick,red,->-] (0.4,0.4) -- (0.4,0.917);
\draw[very thick,red,-<-] (-0.4,0.4) -- (-0.4,0.917);
\draw[very thick,red,->-] (0.4,0.4) -- (-0.4,0.4);
\draw[dashed] (1,0) arc (0:180:1cm);
\bline{-1,0}{1,0}{0.2}
\draw[fill=black] (-0.8,0) circle(2pt);
\draw[fill=black] (0.8,0) circle(2pt);
\node[scale=1.5] at (2,0.5) {$\sim$};
\begin{scope}[xshift=4cm]
\draw[very thick,red,->-] (0.4,0) -- (0.4,0.917);
\draw[very thick,red,-<-] (-0.4,0) -- (-0.4,0.917);
\draw[dashed] (1,0) arc (0:180:1cm);
\bline{-1,0}{1,0}{0.2}
\draw[fill=black] (-0.8,0) circle(2pt);
\draw[fill=black] (0.8,0) circle(2pt);
\end{scope}
\end{tikzpicture}
\end{align}
Similarly for the opposite orientation. We call the face in the left-hand side a \emph{boundary H-face}.

\item[(E3)] Puncture H-moves:
\begin{align}\label{eq:puncture_H-move_1}
\begin{tikzpicture}
\draw[dashed, fill=white] (0,0) circle [radius=1];
\draw(60:0.6) coordinate(A);
\draw(120:0.6) coordinate(B);
\fill[pink!60] (A) -- (B) -- (0,0) --cycle; 
\draw[red,very thick,-<-={0.6}{}] (0,0)--(A);
\draw[red,very thick,->-={0.6}{}] (0,0)--(B);
\draw[red,very thick,->-={0.6}{}] (A)--(60:1);
\draw[red,very thick,-<-={0.6}{}] (B)--(120:1);
\draw[red,very thick,->-] (A) -- (B);
\node[red,scale=0.8,anchor=west] at (0.1,0) {$\epsilon$};
\node[red,scale=0.8,anchor=east] at (-0.1,0) {$\epsilon$};
\draw[fill=white] (0,0) circle(2pt);
\node[scale=1.5] at (2,0) {$\sim$};
\begin{scope}[xshift=4cm]
\draw[dashed, fill=white] (0,0) circle [radius=1];
\draw(60:0.6) coordinate(A);
\draw(120:0.6) coordinate(B);
\draw[red,very thick,->-={0.6}{}] (0,0)--(60:1);
\draw[red,very thick,-<-={0.6}{}] (0,0)--(120:1);
\node[red,scale=0.8,anchor=west] at (0.1,0) {$\epsilon$};
\node[red,scale=0.8,anchor=east] at (-0.1,0) {$\epsilon$};
\draw[fill=white] (0,0) circle(2pt);
\end{scope}
\end{tikzpicture}
\end{align}
for $\epsilon \in \{+,-\}$, and
\begin{align}\label{eq:puncture_H-move_2}
\begin{tikzpicture}
\draw[dashed] (0,0) circle(1cm);
\filldraw[draw=red,very thick,->-={0.7}{red},-<-={0.42}{red},fill=pink!60] (0,0) ..controls (0.5,0.1) and (0.2,0.4).. (0,0.4) ..controls (-0.2,0.4) and (-0.5,0.1).. (0,0);
\draw[red,very thick,->-={0.7}{}] (0,0.4) -- (0,0.7);
\draw[red,very thick,-<-] (0,0.7) -- (60:1);
\draw[red,very thick,-<-] (0,0.7) -- (120:1);
\node[red,scale=0.8,anchor=east] at (-0.1,-0.1) {$+$};
\node[red,scale=0.8,anchor=west] at (0.1,-0.1) {$-$};
\draw[fill=white] (0,0) circle(2pt);
\node[scale=1.5] at (2,0) {$\sim$};
\begin{scope}[xshift=4cm]
\draw[dashed] (0,0) circle(1cm);
\filldraw[draw=red,very thick,->-={0.7}{red},-<-={0.42}{red},fill=pink!60] (0,0) ..controls (0.5,0.1) and (0.2,0.4).. (0,0.4) ..controls (-0.2,0.4) and (-0.5,0.1).. (0,0);
\draw[red,very thick,->-={0.7}{}] (0,0.4) -- (0,0.7);
\draw[red,very thick,-<-] (0,0.7) -- (60:1);
\draw[red,very thick,-<-] (0,0.7) -- (120:1);
\node[red,scale=0.8,anchor=east] at (-0.1,-0.1) {$-$};
\node[red,scale=0.8,anchor=west] at (0.1,-0.1) {$+$};
\draw[fill=white] (0,0) circle(2pt);
\node[scale=1.5] at (2,0) {$\sim$};
\end{scope}
\begin{scope}[xshift=8cm]
\draw[dashed] (0,0) circle(1cm);
\draw[red,very thick,-<-] (0,0) -- (60:1);
\draw[red,very thick,-<-] (0,0) -- (120:1);
\node[red,scale=0.8,anchor=east] at (-0.1,0) {$+$};
\node[red,scale=0.8,anchor=west] at (0.1,0) {$-$};
\draw[fill=white] (0,0) circle(2pt);
\end{scope}
\end{tikzpicture}
\end{align}
Similarly for the opposite orientation. We call the face in the left-hand side of \eqref{eq:puncture_H-move_1} a \emph{puncture H-face}. 
\end{enumerate}
The moves (E2) and the first one in (E3) lead to the parallel-moves for the arcs. See \cite[Lemma 2.4]{IK22}.

Also note that we can always transform any signed web to a signed web without boundary H-faces (resp. puncture H-faces) by applying (E2) and (E3), respectively. Slightly generalizing the terminology in \cite{FS20}, such a signed web is said to be \emph{boundary-reduced} (resp. \emph{puncture-reduced}). It is said to be \emph{reduced} if it is both boundary- and puncture-reduced.

\begin{enumerate}
    \item [(E4)] Peripheral move: removing or creating a peripheral component:
    \begin{align}\label{eq:peripheral}
    \begin{tikzpicture}[scale=.8]
    \draw[dashed, fill=white] (0,0) circle [radius=1];
    \draw[very thick, red, fill=pink!60] (0,0) circle [radius=0.5];
    \filldraw[draw=black,fill=white] (0,0) circle(2.5pt); 
    \begin{scope}[xshift=5cm]
    \coordinate (P) at (-0.5,0) {};
    \coordinate (P') at (0.5,0) {};
    \coordinate (C) at (0,0.5) {};
    \draw[very thick, red, fill=pink!60] (P) to[out=north, in=west] (C) to[out=east, in=north] (P');
    \draw[dashed] (1,0) arc (0:180:1cm);
    \bline{-1,0}{1,0}{0.2}
    \draw[fill=black] (0,0) circle(2pt);
    \end{scope}
    \end{tikzpicture}
    \end{align}
    Moreover, we have the moves
    \begin{align*}
    \begin{tikzpicture}
    \draw[dashed, fill=white] (0,0) circle [radius=1];
    \fill[pink!60] (0,0) circle(0.6cm);
    \draw[red,very thick,->-={0.8}{}] (0,0) circle(0.6cm);
    \foreach \i in {120,60,0}
    { 
        \draw[red,very thick,-<-={0.8}{}] (\i+15:0) -- (\i+15:0.6);
        \draw[red,very thick,-<-] (\i:0.6) -- (\i:1);
    }
    \node[red,scale=0.8] at (-0.2,0) {$+$};
	\node[red,scale=0.8] at (-0.05,0.3) {$+$};
	\node[red,scale=0.8] at (0.25,0.2) {$+$};
    \draw[red,very thick,dotted] (-30:0.3) arc(-30:-90:0.3);
    \draw[fill=white] (0,0) circle(2pt);
    \node[scale=1.7] at (2,0) {$\sim$};
    \begin{scope}[xshift=4cm]
    \draw[dashed, fill=white] (0,0) circle [radius=1];
    \foreach \i in {120,60,0} \draw[red,very thick,-<-={0.7}{}] (\i:0) -- (\i:1);
    \node[red,scale=0.8] at (-0.2,0.1) {$+$};
    \node[red,scale=0.8] at (0,0.3) {$+$};
    \node[red,scale=0.8] at (0.3,0.15) {$+$};
    \draw[red,very thick,dotted] (-30:0.3) arc(-30:-90:0.3);
    \draw[fill=white] (0,0) circle(2pt);
    \end{scope}
    {\begin{scope}[xshift=8cm]
    \draw[dashed, fill=white] (0,0) circle [radius=1];
    \fill[pink!60] (0,0) circle(0.6cm);
    \draw[red,very thick,->-={0.8}{}] (0,0) circle(0.6cm);
    \foreach \i in {120,60,0}
    { 
        \draw[red,very thick,->-={0.85}{}] (\i-15:0) -- (\i-15:0.6);
        \draw[red,very thick,->-] (\i:0.6) -- (\i:1);
    }
    \node[red,scale=0.8] at (-0.2,0.1) {$-$};
    \node[red,scale=0.8] at (0.1,0.3) {$-$};
    \node[red,scale=0.8] at (0.25,0.05) {$-$};
    \draw[red,very thick,dotted] (-45:0.3) arc(-45:-105:0.3);
    \draw[fill=white] (0,0) circle(2pt);
    \node[scale=1.7] at (2,0) {$\sim$};
    \end{scope}}
    {\begin{scope}[xshift=12cm]
    \draw[dashed, fill=white] (0,0) circle [radius=1];
    \foreach \i in {120,60,0} \draw[red,very thick,->-={0.7}{}] (\i:0) -- (\i:1);
    \node[red,scale=0.8] at (-0.3,0.1) {$-$};
    \node[red,scale=0.8] at (0,0.3) {$-$};
    \node[red,scale=0.8] at (0.3,0.1) {$-$};
    \draw[fill=white] (0,0) circle(2pt);
    \draw[red,very thick,dotted] (-30:0.3) arc(-30:-90:0.3);
    \end{scope}}
    \end{tikzpicture}
    \end{align*}
    Similarly for the opposite orientation.
\end{enumerate}
We will consider the equivalence relation on signed webs generated by isotopies of marked surfaces and the elementary moves (E1)--(E4). Observe that the moves (E1)--(E4) preserves the admissibility. On the other hand, a non-elliptic signed web may be equivalent to an elliptic web. See \cite[Example 2.5]{IK22} for an example. 

\begin{dfn}[rational unbounded $\mathfrak{sl}_3$-laminations]\label{def:unbounded laminations}
A \emph{rational unbounded $\mathfrak{sl}_3$-lamination} (or a rational $\mathfrak{sl}_3$-$\X$-lamination) on $\Sigma$ is an admissible, non-elliptic signed $\mathfrak{sl}_3$-web $W$ on $\Sigma$ equipped with a positive rational number (called the \emph{weight}) on each component, which is considered modulo the equivalence relation generated by isotopies and the following operations:
\begin{enumerate}
    \item Elementary moves (E1)--(E4) for the underlying signed webs. Here the corresponding components are assumed to have the same weights. 
    \item Combine a pair of isotopic loops with the same orientation with weights $u$ and $v$ into a single loop with the weight $u+v$. Similarly combine a pair of isotopic oriented arcs with the same orientation (and with the same signs if some of their ends are incident to punctures) into a single one by adding their weights. 
    \item For an integer $n \in \bZ_{>0}$ and a rational number $u \in \bQ_{>0}$, replace a component with weight $nu$ with its \emph{$n$-cabling} with weight $u$, which locally looks like
    \begin{align*}
        \begin{tikzpicture}[scale=1.2]
        \draw[dashed] (0,0) circle(0.76cm);
        \foreach \i in {30,150,270}
        \draw[red,very thick] (0,0) -- (\i:0.76);
        \node[red] at (0,0.3) {$nu$};
        \node at (1.5,0) {\scalebox{1.2}{$\sim$}}; 
        \begin{scope}[xshift=3cm]
        \draw[red, very thick] (-30:0.4) -- (90:0.4) -- (210:0.4) --cycle;
        \foreach \i in {1,4}
        {
        \draw[red,very thick] ($(-30:0.4)!\i/5!(90:0.4)$) --++(30:0.5) coordinate(A\i);
        \draw[red,very thick] ($(90:0.4)!\i/5!(210:0.4)$) --++(150:0.5) coordinate(B\i);
        \draw[red,very thick] ($(210:0.4)!\i/5!(-30:0.4)$) --++(-90:0.5) coordinate(C\i);
        }
        \draw[red,very thick,dotted] (15:0.5) -- (45:0.5);
        \draw[red,very thick,dotted] (135:0.5) -- (165:0.5);
        \draw[red,very thick,dotted] (255:0.5) -- (285:0.5);
        \draw[decorate,decoration={brace,amplitude=3pt,raise=4pt}] (A4) --node[midway,xshift=12pt,yshift=8pt]{$n$} (A1);
        \draw[decorate,decoration={brace,amplitude=3pt,raise=4pt}] (B4) --node[midway,xshift=-12pt,yshift=8pt]{$n$} (B1);
        \draw[decorate,decoration={brace,amplitude=3pt,raise=4pt}] (C4) --node[midway,below=7pt]{$n$} (C1);
        \node[red] at (0,0.6) {$u$};
        \draw[dashed] (0,0) circle(0.76cm);
        \end{scope}
        \begin{scope}[xshift=6cm]
        \draw[dashed] (0,0) circle(0.76cm);
        \node at (1.5,0) {\scalebox{1.2}{$\sim$}};
        \clip (0,0) circle(0.76cm);
        \draw[red,very thick,->-={0.8}{}] (-1,0) --node[midway,above]{$nu$} (1,0);
        \end{scope}
        \begin{scope}[xshift=9cm]
        \draw[dashed] (0,0) circle(0.76cm);
        \draw[decorate,decoration={brace,amplitude=3pt,raise=2pt}] (0.76,0.2) --node[midway,right=0.3em]{$n$} (0.76,-0.2);
        \clip (0,0) circle(0.76cm);
        \draw[red,very thick,->-={0.8}{}] (-1,0.2) --node[above]{$u$} (1,0.2);
        \draw[red,very thick,dotted] (0,0.2) -- (0,-0.2);
        \draw[red,very thick,->-={0.8}{}] (-1,-0.2) --node[below]{$u$} (1,-0.2);
        \end{scope}
        \end{tikzpicture}
    \end{align*}
    For a loop or arc component, it is just the $n-1$ times applications of the operation (2). One can also verify that the cabling operation is associative in the sense that the $n$-cabling followed by the $m$-cabling agrees with the $nm$-cabling. 
\end{enumerate}
\end{dfn}

Let $\cL^x(\Sigma,\bQ)$ denote the set of equivalence classes of the rational unbounded $\mathfrak{sl}_3$-laminations on $\Sigma$. We have a natural $\bQ_{>0}$-action on $\cL^x(\Sigma,\bQ)$ that simultaneously rescales the weights. 
A rational unbounded $\fsl_3$-lamination is said to be \emph{integral} if all the weights are integers. The subset of integral unbounded $\mathfrak{sl}_3$-laminations is denoted by $\cL^x(\Sigma,\bZ)$.

\begin{conv}\label{notation:division of honeycombs}
In view of the equivalence relation (4), we will occasionally use the following equivalent notations for honeycombs:
\begin{align*}
\begin{tikzpicture}[scale=1.2]
\draw[red,very thick] (-30:0.4) -- (90:0.4) -- (210:0.4) --cycle;
\foreach \i in {1,4}
{
\draw[red,very thick] ($(-30:0.4)!\i/5!(90:0.4)$) --++(30:0.5) coordinate(A\i);
\draw[red,very thick] ($(90:0.4)!\i/5!(210:0.4)$) --++(150:0.5) coordinate(B\i);
\draw[red,very thick] ($(210:0.4)!\i/5!(-30:0.4)$) --++(-90:0.5) coordinate(C\i);
}
\draw[red,very thick,dotted] (15:0.5) -- (45:0.5);
\draw[red,very thick,dotted] (135:0.5) -- (165:0.5);
\draw[red,very thick,dotted] (255:0.5) -- (285:0.5);
\draw[decorate,decoration={brace,amplitude=3pt,raise=4pt}] (A4) --node[midway,xshift=12pt,yshift=8pt]{$n$} (A1);
\draw[decorate,decoration={brace,amplitude=3pt,raise=4pt}] (B4) --node[midway,xshift=-12pt,yshift=8pt]{$n$} (B1);
\draw[decorate,decoration={brace,amplitude=3pt,raise=4pt}] (C4) --node[midway,below=7pt]{$n$} (C1);
\draw[dashed] (0,0) circle(0.76cm);
\node at (1.5,0) {\scalebox{1.2}{$\sim$}}; 
\begin{scope}[xshift=3cm]
\draw[red,very thick] (-30:0.4) -- (90:0.4) -- (210:0.4) --cycle;
\draw[red,very thick] ($(-30:0.4)!0.5!(90:0.4)$) --node[midway,above]{$n$} ++(30:0.5);
\draw[red,very thick] ($(90:0.4)!0.5!(210:0.4)$) --node[midway,above]{$n$} ++(150:0.5);
\draw[red,very thick] ($(210:0.4)!0.5!(-30:0.4)$) --node[midway,right]{$n$} ++(-90:0.5);
\draw[dashed] (0,0) circle(0.76cm);
\node at (1.5,0) {\scalebox{1.2}{$\sim$}}; 
\end{scope}
\begin{scope}[xshift=6cm]
\draw[red,very thick] (-30:0.4) -- (90:0.4) -- (210:0.4) --cycle;
\draw[red,very thick] ($(-30:0.4)!0.5!(90:0.4)$) --node[midway,above]{$n$} ++(30:0.5);
\draw[red,very thick] ($(90:0.4)!0.5!(210:0.4)$) --node[midway,above]{$n$} ++(150:0.5);
\draw[red,very thick] ($(210:0.4)!0.3!(-30:0.4)$) --node[midway,left,scale=0.9]{$n_1$} ++(-90:0.5);
\draw[red,very thick] ($(210:0.4)!0.7!(-30:0.4)$) --node[midway,right,scale=0.9]{$n_2$} ++(-90:0.5);
\draw[dashed] (0,0) circle(0.76cm);
\end{scope}
\end{tikzpicture}
\end{align*}
with $n_1+n_2=n$. We may also split an edge of weight $n$ with $k$ edges of weight $n_1,\dots,n_k$ with $n_1+\dots+n_k=n$.
\end{conv}

\begin{dfn}[Dynkin involution]\label{def:Dynkin_geometric}
The \emph{Dynkin involution} is the involutive automorphism
\begin{align*}
    \ast: \cL^x(\Sigma,\bQ) \to \cL^x(\Sigma,\bQ), \quad \hL \mapsto \hL^\ast,
\end{align*}
where $\hL^\ast$ is obtained from $\hL$ by reversing the orientation of every components of the underlying web, and keeping the signs at punctures intact. 
Since all the elementary moves (E1)--(E4) are equivariant under the orientation-reversion, this indeed defines an automorphism on $\cL^x(\Sigma,\bQ)$. 
\end{dfn}

\paragraph{\textbf{Bounded laminations and the ensemble map}}

\begin{dfn}[rational bounded $\fsl_3$-laminations]\label{def:bounded laminations}
A \emph{rational bounded $\fsl_3$-lamination} (or a \emph{rational $\fsl_3$-$\A$-lamination}) on $\Sigma$ is a bounded non-elliptic $\fsl_3$-web $W$ on $\Sigma$ equipped with a rational number (called the \emph{weight}) on each component such that the weight on a non-peripheral component is positive. It is considered modulo the equivalence relation generated by isotopies and the operations (2)--(4) in \cref{def:unbounded laminations}.
\end{dfn}
Let $\cL^a(\Sigma,\bQ)$ denote the space of rational bounded $\fsl_3$-laminations. We have a natural $\bQ_{>0}$-action on $\cL^a(\Sigma,\bQ)$ that simultaneously rescales the weights. A rational bounded $\fsl_3$-lamination is said to be \emph{integral} if all the weights are integers. The subset of integral bounded $\fsl_3$-laminations is denoted by $\cL^a(\Sigma,\bZ)$. For the relation to the works of Douglas--Sun \cite{DS20I,DS20II} and Kim \cite{Kim21}, see \cite[Remark 2.10]{IK22}.


By forgetting the peripheral components, we get the \emph{geometric ensemble map}
\begin{align}\label{eq:ensemble_unfrozen}
    p: \cL^a(\Sigma,\bQ) \to \cL^x(\Sigma,\bQ).
\end{align}
In the next subsection, we will describe the `kernel' and the `cokernel` of the geometric ensemble map $p$.

\subsection{Lamination exact sequence}\label{subsec:lam_exact_seq}

\subsubsection{
The tropicalized Casimir map on $\cL^x(\Sigma,\bQ)$}
For each puncture $p \in \bP$, we define a function 
\begin{align*}
    \theta_p: \cL^x(\Sigma,\bQ) \to \mathsf{P}^\vee_\bQ
\end{align*}
which we call the \emph{tropicalized Casimir map} associated with $p$, as follows. 
For a rational $\mathfrak{sl}_3$-lamination $\hL \in \cL^x(\Sigma,\bQ)$ with a support $W$, the coweight $\theta_p(\hL) \in \mathsf{P}^\vee_\bQ$ is defined to be the sum of the contributions from each end of the web $W$ incident to $p$, given as in \cref{fig:weight-contribution}. Then it is clearly invariant under the equivalence relation in \cref{def:unbounded laminations}. 

\begin{figure}[ht]
\begin{tikzpicture}[scale=0.8]
    \draw[dashed, fill=white] (0,0) circle [radius=1];
    \draw[red,very thick,->-] (0,1) -- (0,0);
    \filldraw[fill=white](0,0) circle(2.5pt) node[above left,red] {$+$};
    \node at (0,-0.5) {$\varpi^\vee_2$};
    
    \begin{scope}[xshift=3cm]
    \draw[dashed, fill=white] (0,0) circle [radius=1];
    \draw[red,very thick,->-] (0,1) -- (0,0);
    \filldraw[fill=white](0,0) circle(2.5pt) node[above left,red] {$-$};
    \node at (0,-0.5) {$-\varpi^\vee_1$};
    \end{scope}
    
    \begin{scope}[xshift=6cm]
    \draw[dashed, fill=white] (0,0) circle [radius=1];
    \draw[red,very thick,-<-] (0,1) -- (0,0);
    \filldraw[fill=white](0,0) circle(2.5pt) node[above left,red] {$+$};
    \node at (0,-0.5) {$\varpi^\vee_1$};
    \end{scope}
    
    \begin{scope}[xshift=9cm]
    \draw[dashed, fill=white] (0,0) circle [radius=1];
    \draw[red,very thick,-<-] (0,1) -- (0,0);
    \filldraw[fill=white](0,0) circle(2.5pt) node[above left,red] {$-$};
    \node at (0,-0.5) {$-\varpi^\vee_2$};
    \end{scope}
\end{tikzpicture}
    \caption{Contribution to $\theta_p(\hL)$ from each end incident to the puncture $p$.}
    \label{fig:weight-contribution}
\end{figure}

Let 
$\theta:=(\theta_p)_{p \in \bP}: \cL^x(\Sigma,\bQ) \to (\mathsf{P}^\vee_\bQ)^{\bP}$. 

It is also verified by inspection that
\begin{align}\label{eq:Dynkin-theta}
    \theta_p(\hL^\ast) = (\theta_p(\hL))^\ast
\end{align}
holds for all $p \in \bP$, where the Dynkin involution on the coweights is given by $(\varpi^\vee_s)^\ast:=\varpi^\vee_{3-s}$ for $s=1,2$.

\subsubsection{The 
peripheral action on $\cL^a(\Sigma,\bQ)$}
We have a natural action of $(\bQ^2)^{\bM}$ on $\cL^a(\Sigma,\bQ)$ that adds peripheral leaves to a given bounded $\fsl_3$-lamination. It turns out to be natural to interpret it as actions of the coroot lattice, as follows. 
For a marked point $m \in \bM$, a copy of $\mathsf{Q}^\vee_\bQ$  acts on $\cL^a(\Sigma,\bQ)$, where $\mu=v_1\alpha_1^\vee + v_2\alpha_2^\vee \in \mathsf{Q}^\vee_\bQ$ adds the peripheral components around $m$ of the following forms:
\begin{itemize}
    \item oriented counter-clockwisely with weight $2v_1-v_2$, and 
    \item oriented clockwisely with weight $-v_1+2v_2$.
\end{itemize}
This action is designed so that the fundamental coweight $\varpi^\vee_1=\frac{1}{3}(2\alpha_1^\vee+\alpha_2^\vee)$ (resp. $\varpi^\vee_2=\frac{1}{3}(\alpha_1^\vee+2\alpha_2^\vee)$) gives rise to a single peripheral component with weight $1$ and the counter-clockwise (resp. clockwise) orientation. 
These actions assigned to the marked points commute with each other.
In \cref{subsec:cluster_exact_seq}, we will see that this action coincides with the tropicalization of the $H_\A$-action on the cluster variety $\A_{\fsl_3,\Sigma}$.

\subsubsection{The lamination exact sequence}
The previously discussed structures are combined as follows.

\begin{prop}\label{prop:exact_seq_lamination}
We have $\theta^{-1}(0) = p(\cL^a(\Sigma,\bQ))$. Therefore we have the exact sequence
\begin{align*}
    0 \to (\mathsf{Q}^\vee_\bQ)^{\bM} \to \cL^a(\Sigma,\bQ) \xrightarrow{p} \cL^x(\Sigma,\bQ) \xrightarrow{\theta} (\mathsf{P}^\vee_\bQ)^{\bP} \to 0.
\end{align*}
Here the map $(\mathsf{Q}^\vee_\bQ)^{\bM} \to \cL^a(\Sigma,\bQ)$ is given by the action on the empty $\fsl_3$-lamination.
\end{prop}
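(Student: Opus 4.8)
The plan is to verify exactness at each of the four terms, treating the central equality $\theta^{-1}(0)=p(\cL^a(\Sigma,\bQ))$ as the crux and the remaining three assertions as essentially formal. Throughout I read the local coweight contributions off \cref{fig:weight-contribution}: using the shorthand $(\out,\pm)$, $(\inn,\pm)$ for the four types of ends incident to a puncture $p$, an $(\out,+)$-end contributes $\varpi^\vee_1$, an $(\inn,-)$-end contributes $-\varpi^\vee_1$, an $(\inn,+)$-end contributes $\varpi^\vee_2$, and an $(\out,-)$-end contributes $-\varpi^\vee_2$. Hence $\theta_p(\hL)=0$ is equivalent to the two \emph{balance identities}: the total weight of $(\out,+)$-ends equals that of $(\inn,-)$-ends, and the total weight of $(\inn,+)$-ends equals that of $(\out,-)$-ends.

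First I would dispose of the formal parts. The inclusion $p(\cL^a(\Sigma,\bQ))\subseteq\theta^{-1}(0)$ is immediate: a bounded web has all univalent vertices on $\partial^\ast\Sigma$, so its image under $p$ (which only forgets peripheral components, see \eqref{eq:ensemble_unfrozen}) has no end incident to any puncture and every $\theta_p$ is an empty sum. Surjectivity of $\theta$ is handled by an explicit construction: given $(\lambda_p)_p$ with $\lambda_p=a_p\varpi^\vee_1+b_p\varpi^\vee_2$, realize $a_p\varpi^\vee_1$ by $|a_p|$-weighted $(\out,+)$- or $(\inn,-)$-ends according to the sign of $a_p$ and $b_p\varpi^\vee_2$ by $(\inn,+)$- or $(\out,-)$-ends; a direct inspection against \eqref{eq:puncture-admissible} shows that no adjacency of these two blocks of spokes is a bad end, and the spokes may be routed to $\partial^\ast\Sigma$ or along essential arcs using (S1)--(S2), yielding an admissible non-elliptic web. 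Finally, injectivity of $(\mathsf{Q}^\vee_\bQ)^{\bM}\to\cL^a(\Sigma,\bQ)$ and exactness at $\cL^a(\Sigma,\bQ)$ follow formally: the purely peripheral laminations around a fixed $m\in\bM$ form a $2$-dimensional $\bQ$-vector space with independent coordinates the counter-clockwise and clockwise weights (peripheral components may carry negative weights, and opposite orientations are not identified), the assignment $v_1\alpha^\vee_1+v_2\alpha^\vee_2\mapsto(2v_1-v_2,\,-v_1+2v_2)$ is the linear isomorphism given by $C(\fsl_3)$ (of determinant $3$), and $p$ forgets precisely these components, so $p^{-1}(p(\hL))$ is a torsor over the group of purely peripheral laminations, identified with $(\mathsf{Q}^\vee_\bQ)^{\bM}$.

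It remains to prove $\theta^{-1}(0)\subseteq p(\cL^a(\Sigma,\bQ))$, which is the main point. Given $\hL$ with $\theta_p(\hL)=0$ for all $p$, I would represent it by an admissible, reduced, non-elliptic signed web $W$ and remove its ends at each puncture by capping them into bounded pieces without changing the class in $\cL^x(\Sigma,\bQ)$. The two balance identities let me match the $(\out,+)$-weight with the $(\inn,-)$-weight and the $(\inn,+)$-weight with the $(\out,-)$-weight, and each matched pair is exactly a \emph{resolvable pair} (the first pattern of \eqref{eq:puncture-admissible}, with $\epsilon=+$ and $\epsilon=-$ respectively): a configuration that can be pulled off $p$, either by the puncture $H$-moves \eqref{eq:puncture_H-move_1}, \eqref{eq:puncture_H-move_2} combined with the peripheral moves \eqref{eq:peripheral}, or by resolving it into an arc skirting $p$. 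After capping at every puncture the resulting web $W'$ has no punctual ends, hence is bounded, with $p([W'])=\hL$ by construction; adding back suitable peripheral components produces the required preimage in $\cL^a(\Sigma,\bQ)$.

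The main obstacle is precisely this capping step. Admissibility forbids a lone matched pair from appearing as an adjacent bad end of \eqref{eq:puncture-admissible}, so the matched ends are in general separated around $p$ by other strands and are glued to the rest of $W$ globally; consequently the removal cannot be carried out one pair at a time in isolation, and naive resolution may create new intersections or elliptic faces. I would control this in one of two ways: (i) geometrically, by first using isotopy and the $H$-moves to bring matched ends into adjacent position and then cancelling them against peripheral components supplied by the $(\mathsf{Q}^\vee_\bQ)^{\bM}$-action, the balance identities guaranteeing that the required coroot adjustment exists by invertibility of $C(\fsl_3)$; or (ii) by passing to the shear coordinates $\sfx_\tri$, in which $\theta$ and $p$ become linear maps and the inclusion reduces to the finite-dimensional statement that the kernel of the tropicalized Casimir map equals the image of the ensemble (exchange) matrix, to be matched with the cluster exact sequence in the next subsection.
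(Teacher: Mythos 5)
There is a genuine gap in your main step, and it stems from a misreading of the admissibility condition. You treat the first pattern of \eqref{eq:puncture-admissible} as forbidding only an \emph{adjacent} incoming-$(-\epsilon)$/outgoing-$(\epsilon)$ pair, and therefore conclude that an admissible web with $\theta_p(\hL)=0$ may still carry ends at $p$, matched into ``resolvable pairs'' separated by other strands, which you then propose to cap off. But a resolvable pair in the sense of the paper is \emph{any} pair consisting of an outgoing end with sign $+$ and an incoming end with sign $-$ (contributing $\varpi_1^\vee$ and $-\varpi_1^\vee$), or an incoming end with sign $+$ and an outgoing end with sign $-$ (contributing $\varpi_2^\vee$ and $-\varpi_2^\vee$), regardless of what lies between them around $p$ (see \cref{def:resolution_move}, Step 1, where the pair is explicitly interleaved with other ends). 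So admissibility says precisely that no two ends at $p$ contribute opposite fundamental coweights. Combined with your balance identities and the strict positivity of the weights, this forces each of the four total weights to vanish separately: the representing web has \emph{no} ends at $p$ at all, hence is bounded and manifestly in the image of $p$. This one-line observation is the entire content of the inclusion $\theta^{-1}(0)\subseteq p(\cL^a(\Sigma,\bQ))$, and it is exactly how the paper argues.

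Consequently your ``capping'' procedure is solving a non-problem, and as sketched it would not be justified anyway. Resolving a resolvable pair without changing shear coordinates is a delicate operation (it creates intersections with the intervening strands and $4$-gon faces) that the paper only develops in \cref{subsec:resolution_move} for \emph{non-admissible} webs arising from the Weyl group action; invoking it here is circular in spirit and unnecessary. Your fallback (i) of ``cancelling ends against peripheral components supplied by the $(\mathsf{Q}^\vee_\bQ)^{\bM}$-action'' does not make sense: that action lives on $\cL^a(\Sigma,\bQ)$, peripheral loops around a puncture have no ends at the puncture, and they are forgotten by $p$ in any case. Your fallback (ii) defers to the cluster exact sequence of \cref{subsec:cluster_exact_seq}, which is established \emph{after} and partly by means of this proposition. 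The peripheral formal parts of your write-up (the inclusion $p(\cL^a)\subseteq\theta^{-1}(0)$, surjectivity of $\theta$ by spokes, and the identification of the fibres of $p$ with $(\mathsf{Q}^\vee_\bQ)^{\bM}$ via the Cartan matrix) are fine and consistent with what the paper declares obvious.
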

Indeed, since a pair of ends with opposite coweights are not allowed by the admissible condition, $\theta^{-1}(0)$ consists of the bounded rational $\mathfrak{sl}_3$-laminations, namely the image of $p$. The exactness of the other parts are obvious.  

Let us denote the space of bounded $\fsl_3$-laminations without peripheral components by $\cL^u(\Sigma,\bQ):= \theta^{-1}(0) = p(\cL^a(\Sigma,\bQ))$. Then we have an obvious splitting
\begin{align}\label{eq:splitting}
    (p,(\mathsf{w}_m)_{m \in \bM}): \cL^a(\Sigma,\bQ) \xrightarrow{\sim} \cL^u(\Sigma,\bQ) \times (\mathsf{Q}^\vee_\bQ)^{\bM},
\end{align}
where $\mathsf{w}_m: \cL^a(\Sigma,\bQ) \to \mathsf{Q}_\bQ^\vee$ extracts the weights of peripheral components around $m \in \bM$. We will see that $\mathsf{w}_m$ is given by the tropicalized Goncharov--Shen potential in \cref{prop:Weyl_action_A} (cf.~\cite[Section 2]{DS20II}). 


\subsection{The spiralling diagram associated with a signed web}
In order to define the shear coordinates of a non-elliptic signed web $W$, we first deform it to the \emph{spiralling diagram} $\cW$. See \cite[Section 3]{IK22} for a detail. Roughly speaking, the spiralling diagram $\cW$ is obtained as follows.
\begin{enumerate}
    \item In a small disk neighborhood $D_p$ of each puncture $p \in \bP$, deform each end of $W$ incident to $p$ into an infinitely spiralling curve, according to their signs as shown in \cref{fig:spiral}. 
    \item The resulting (possibly infinite) self-intersections are replaced with H-shaped parts as shown in \cref{fig:spiral_example}.
\end{enumerate}

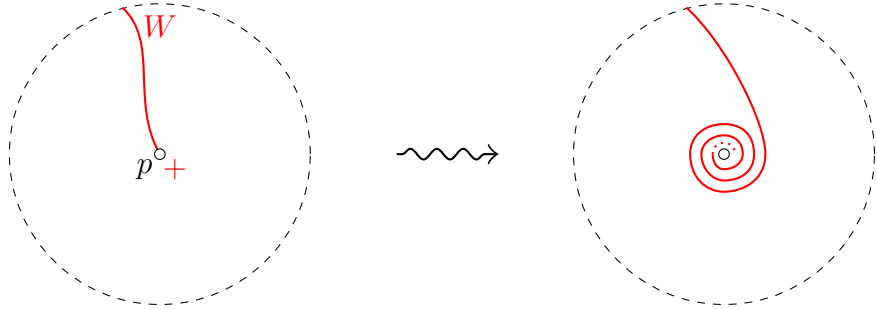
\begin{figure}[htbp]
    \centering
\begin{tikzpicture}
\draw[dashed] (-2.5,-1.5) circle(2cm);
\draw [red,thick](-3,0.45) .. controls (-2.5,0) and (-2.9,-0.8) .. (-2.5,-1.5);
\filldraw[fill=white] (-2.5,-1.5) circle(2pt);
\node[red] at (-2.5,0.2) {$W$};
\node[red] at (-2.3,-1.7) {$+$};
\node at (-2.7,-1.7) {$p$};
\draw (5,-1.5) circle(2pt);
\draw[dashed] (5,-1.5) circle(2cm);

\draw [red,thick](4.5,0.45) .. controls (5,0) and (5.55,-1.05) .. (5.55,-1.5) .. controls (5.55,-1.85) and (5.25,-2) .. (5,-2) .. controls (4.75,-2) and (4.55,-1.8) .. (4.55,-1.5) .. controls (4.55,-1.25) and (4.75,-1.1) .. (5,-1.1) .. controls (5.25,-1.1) and (5.4,-1.25) .. (5.4,-1.5) .. controls (5.4,-1.75) and (5.2,-1.85) .. (5,-1.85) .. controls (4.85,-1.85) and (4.7,-1.7) .. (4.7,-1.5) .. controls (4.7,-1.35) and (4.85,-1.25) .. (5,-1.25) .. controls (5.15,-1.25) and (5.25,-1.35) .. (5.25,-1.5) .. controls (5.25,-1.6) and (5.15,-1.7) .. (5,-1.7) .. controls (4.9,-1.7) and (4.85,-1.6) .. (4.85,-1.5);
\draw [red, thick, dotted](4.85,-1.5) .. controls (4.85,-1.3) and (5.15,-1.3) .. (5.15,-1.5);

\draw [thick,-{Classical TikZ Rightarrow[length=4pt]},decorate,decoration={snake,amplitude=2pt,pre length=2pt,post length=3pt}](0.65,-1.5) -- (2,-1.5);
\end{tikzpicture}
    \caption{Construction of a spiralling diagram. The negative sign similarly produces an end spiralling counter-clockwisely.}
    \label{fig:spiral}
\end{figure}

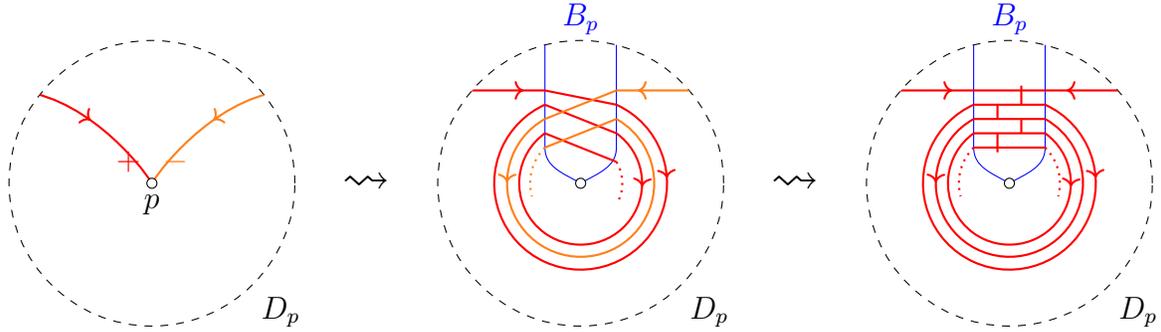
\begin{figure}[htbp]
\begin{tikzpicture}[scale=.95]
\begin{scope}
\draw[dashed] (0,0) circle(2cm);
\node at (1.8,-1.8) {$D_p$};
\clip(0,0) circle(2cm);
\draw[red,thick,->-] (-2,1.3) ..controls (-1,1.3) and (120:0.3).. (0,0) node[above left]{$+$};
\draw[myorange,thick,->-] (2,1.3) ..controls (1,1.3) and (60:0.3).. (0,0) node[above right]{$-$};
\draw[fill=white](0,0) circle(2pt) node[below]{$p$};
\end{scope}
\node[scale=1.5] at (3,0) {$\rightsquigarrow$};

\begin{scope}[xshift=6cm]
\draw[dashed] (0,0) circle(2cm);
\node at (1.8,-1.8) {$D_p$};
\node[blue] at (0,2.3) {$B_p$};
\clip(0,0) circle(2cm);
\draw[blue] (0,0) to[out=150,in=-90] (-0.5,0.5) -- (-0.5,2);
\draw[blue] (0,0) to[out=30,in=-90] (0.5,0.5) -- (0.5,2);
\draw[fill=white](0,0) circle(2pt);
\draw[red,thick,->-={0.8}{}] (-2,1.3) -- (-0.5,1.3);
\draw[red,thick] (-0.5,1.3) -- (0.5,1.1);
\draw[red,thick] (-0.5,1.1) -- (0.5,0.7);
\draw[red,thick] (-0.5,0.7) -- (0.5,0.3);
\draw[red,thick,->-={0.2}{}] (0.5,1.1) arc(65.56:-180-65.56:1.209);
\draw[red,thick,->-={0.2}{}] (0.5,0.7) arc(54.46:-180-54.46:0.86);
\draw[red,thick,dotted] (0.5,0.3) arc(30.96:-30:0.583);

\draw[myorange,thick,->-={0.8}{}] (2,1.3) -- (0.5,1.3);
\draw[myorange,thick] (0.5,1.3) -- (-0.5,0.9);
\draw[myorange,thick] (0.5,0.9) -- (-0.5,0.5);
\draw[myorange,thick,->-={0.2}{}] (-0.5,0.9) arc(-180-60.95:60.95:1.0296);
\draw[myorange,thick,dotted] (-0.5,0.5) arc(-180-45:-180+15:0.5*1.414);
\end{scope}
\node[scale=1.5] at (9,0) {$\rightsquigarrow$};

\begin{scope}[xshift=12cm]
\draw[dashed] (0,0) circle(2cm);
\node at (1.8,-1.8) {$D_p$};
\node[blue] at (0,2.3) {$B_p$};
\clip(0,0) circle(2cm);
\draw[blue] (0,0) to[out=150,in=-90] (-0.5,0.5) -- (-0.5,2);
\draw[blue] (0,0) to[out=30,in=-90] (0.5,0.5) -- (0.5,2);
\draw[fill=white](0,0) circle(2pt);
\draw[red,thick,->-={0.8}{}] (-2,1.3) -- (-0.5,1.3);
\draw[red,thick] (-0.5,1.3) -- (0.5,1.3);
\draw[red,thick] (-0.5,1.1) -- (0.5,1.1);
\draw[red,thick] (-0.5,0.9) -- (0.5,0.9);
\draw[red,thick] (-0.5,0.7) -- (0.5,0.7);
\draw[red,thick] (-0.5,0.5) -- (0.5,0.5);

\draw[red,thick] ($(-0.5,1.5)!2/3!(0.5,1.3)$) -- ($(-0.5,1.1)!2/3!(0.5,1.1)$);
\draw[red,thick] ($(-0.5,1.1)!1/3!(0.5,1.1)$) -- ($(-0.5,0.9)!1/3!(0.5,0.9)$);
\draw[red,thick] ($(-0.5,0.9)!2/3!(0.5,0.9)$) -- ($(-0.5,0.7)!2/3!(0.5,0.7)$);
\draw[red,thick] ($(-0.5,0.7)!1/3!(0.5,0.7)$) -- ($(-0.5,0.5)!1/3!(0.5,0.3)$);

\draw[red,thick,->-={0.2}{}] (0.5,1.1) arc(65.56:-180-65.56:1.209);
\draw[red,thick,->-={0.2}{}] (0.5,0.7) arc(54.46:-180-54.46:0.86);
\draw[red,thick,dotted] (0.5,0.5) arc(45:-15:0.5*1.414);

\draw[red,thick,->-={0.8}{}] (2,1.3) -- (0.5,1.3);
\draw[red,thick,->-={0.2}{}] (-0.5,0.9) arc(-180-60.95:60.95:1.0296);
\draw[red,thick,dotted] (-0.5,0.5) arc(-180-45:-180+15:0.5*1.414);
\end{scope}
\end{tikzpicture}
    \caption{Construction of a spiralling diagram. Replace intersections with H-webs in a periodic manner.}
    \label{fig:spiral_example}
\end{figure}
Given an ideal triangulation $\tri$ of $\Sigma$ without self-folded triangles, the associated \emph{split ideal triangulation $\widehat{\tri}$} is obtained from $\tri$ by replacing each edge $E$ with a biangle $B_E$. The following is an unbounded version of the notion of \emph{good position} of bounded webs \cite{FS20,DS20I}.

\begin{dfn}
The spiralling diagram $\cW$ is in a \emph{good position} with respect to a split triangulation $\widehat{\tri}$ if the intersection $\cW\cap B_E$ (resp. $\cW \cap T$) is an unbounded essential (resp. reduced essential) local web (\cite[Section 3.1]{IK22}) for each $E \in e(\tri)$ and $T \in t(\tri)$.  
\end{dfn}
Here,
\begin{itemize}
    \item An unbounded essential web on a biangle $B$ is the ladder-web $W(S)$ associated with an asymptotically periodic symmetric strand set $S$ as shown in the left of \cref{fig:essential_webs}. 
    \item An unbounded reduced essential web on a triangle $T$ is the disjoint union of at most one honeycomb and at most one semi-infinite asymptotically periodic collection of corner arcs around each corner. See right of \cref{fig:essential_webs}.
\end{itemize}
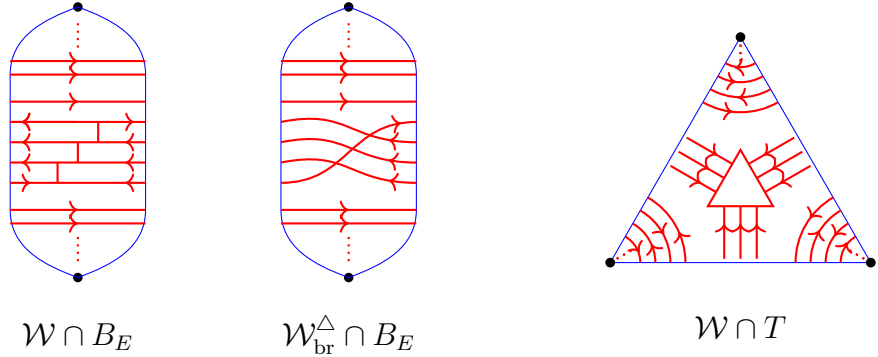
\begin{figure}[h]
    \centering
    \begin{tikzcd}[column sep=huge]
    \begin{tikzpicture}[scale=0.9]
    
    \begin{scope}[xshift=4cm,rotate=90]
    \fill(2,0) circle(2pt);
    \fill(-2,0) circle(2pt);
    \draw[blue](-2,0) to[out=70,in=180] (-1,1) -- (1,1) to[out=0,in=110] (2,0);
    \draw[blue](-2,0) to[out=-70,in=180] (-1,-1) -- (1,-1) to[out=0,in=-110] (2,0);
    \foreach \i in {-0.6,-0.3,0}
    \draw[red,thick,->-={0.2}{}] (\i,-1) ..controls (\i,0) and (\i+0.5,0).. (\i+0.3,1);
    \draw[red,thick,->-={0.9}{}] (-0.6,1) ..controls (-0.6,0) and (0.3,0).. (0.3,-1);
    \draw[red,thick,->-] (0.6,1) -- (0.6,-1);
    \foreach \i in {-1.0,-1.2,1.0,1.2} 
    \draw[red,thick,->-] (\i,1) -- (\i,-1);
    \draw[red,thick,dotted] (-1.4,0) -- (-1.8,0);
    \draw[red,thick,dotted] (1.4,0) -- (1.8,0);
    \node at (-3,0) {$\cW_{\mathrm{br}}^\tri \cap B_E$};
    \end{scope}
    
    \begin{scope}[xshift=0cm,rotate=90]
    \fill(2,0) circle(2pt);
    \fill(-2,0) circle(2pt);
    \draw[blue](-2,0) to[out=70,in=180] (-1,1) -- (1,1) to[out=0,in=110] (2,0);
    \draw[blue](-2,0) to[out=-70,in=180] (-1,-1) -- (1,-1) to[out=0,in=-110] (2,0);
    \foreach \i in {-0.6,-0.3,0}
    {
    \draw[red,thick,->-={0.3}{}] (\i,-1) -- (\i,0);
    \draw[red,thick,-<-={0.3}{}] (\i+0.3,1) -- (\i+0.3,0);
    \draw[red,thick] (\i,-\i-0.3) --++(0.3,0);
    }
    \draw[red,thick,->-={0.3}{}] (-0.6,1) -- (-0.6,0);
    \draw[red,thick,-<-={0.3}{}] (0.3,-1) -- (0.3,0);
    \draw[red,thick,-<-={0.3}{}] (0.3,1) -- (0.3,0);
    \draw[red,thick,->-] (0.6,1) -- (0.6,-1);
    \foreach \i in {-1.0,-1.2,1.0,1.2} 
    \draw[red,thick,->-] (\i,1) -- (\i,-1);
    \draw[red,thick,dotted] (-1.4,0) -- (-1.8,0);
    \draw[red,thick,dotted] (1.4,0) -- (1.8,0);
    \node at (-3,0) {$\cW \cap B_E$};
    \end{scope}
    \end{tikzpicture}
    &
    \begin{tikzpicture}[baseline=6mm]
    \draw[blue] (-30:2) node[fill, circle, inner sep=1.3pt, black] (A){} -- (90:2) node[fill, circle, inner sep=1.3pt, black] (B){} -- (210:2) node[fill, circle, inner sep=1.3pt, black] (C){} --cycle;
    \begin{scope}
    \clip (-30:2) -- (90:2) -- (210:2) --cycle;
    \draw[red,thick] (-30:0.5) -- (90:0.5) -- (210:0.5) --cycle;
    \foreach \i in {1,2,3}
    {
    \foreach \x in {0,120,240}
    \draw[red,thick,rotate=\x,-<-] ($(-30:0.5)!\i/4!(90:0.5)$) --++(30:0.7);
    }
    \foreach \i in {0.6,1}
    {
    \foreach \x in {0,120,240}
        {
        \draw(-30+\x:2)++(120+\x:\i) coordinate(a);
        \draw(-30+\x:2)++(120+\x:\i-0.2) coordinate(b);
        \draw[red,thick,->-={0.4}{}] (a) arc(120+\x:180+\x:\i);
        \draw[red,thick,-<-={0.6}{}] (b) arc(120+\x:180+\x:\i-0.2);
        \draw[red,thick,dotted] (-30+\x:1.9) -- (-30+\x:1.65);
        }
    }
    \end{scope}
    \node at (0,-2) {$\cW \cap T$};
    \end{tikzpicture}
    \end{tikzcd}
    \caption{Unbounded essential webs on a biangle (Left) and a triangle (Right). The braid representative of the former is shown in the Middle.}
    \label{fig:essential_webs}
\end{figure}
Again, see \cite[Section 3.3]{IK22} for a detail. 
Then, after some technical discussion, we can verify that any spiralling diagram arising from a non-elliptic signed web on $\Sigma$ can be isotoped into a good position with respect to $\widehat{\tri}$ by a finite sequence of certain elementary moves
\cite[Theorem 3.10]{IK22}. Moreover, such a good position is unique up to certain elementary moves. 

While the spiralling diagram itself is suited to discuss its good position, the following \emph{braid representation} will be useful to define the shear coordinates:

\begin{dfn}[Braid representation of a spiralling diagram]
Let $\cW$ be a spiralling diagram in a good position with respect to $\widehat{\tri}$. Then its \emph{braid representation} $\cW_{\mathrm{br}}^\tri$ is obtained from $\cW$ by replacing the unbounded essential web $\cW\cap B_E$ on each biangle $B_E$ with its braid representation. 
\end{dfn}
\subsection{Shear coordinates}\label{subsec:shear}
Let us recall the shear coordinates associated with an ideal triangulation $\tri$ of $\Sigma$ without self-folded triangles. Let $\widehat{\tri}$ be the associated split triangulation. 

Given a rational $\fsl_3$-lamination $\hL \in \cL^x(\Sigma,\bQ)$, represent it by an $\fsl_3$-web $W$ together with rational weights on its components and signs at the ends incident to punctures. Let $\cW$ be the associated spiralling diagram together with rational weights on the components, placed in good position with respect to $\widehat{\tri}$. Let $\cW_{\mathrm{br}}^\tri$ be its braid representation, together with well-assigned rational weights on its components. The shear coordinates of $\hL$ are going to be defined out of $\cW_{\mathrm{br}}^\tri$. 

For each $E \in e_{\interior}(\tri)$, let $Q_E$ be the unique quadrilateral containing $E$ as its diagonal, regarded as the union of two triangles $T_L,T_R$ and the biangle $B_E$. By \cite[Theorem 19]{FS20} (\cite[Proposition 22]{DS20I}), the restriction of $\cW_{\mathrm{br}}^\tri$ to each of $T_L$ and $T_R$ has at most one honeycomb web, which is represented by a triangular symbol as in \cref{notation:division of honeycombs}. We call any strand in the braid representative $\cW_{\mathrm{br}}^\tri \cap Q_E$ that is incident to the triangular symbol in $T_L$ (if exists) a \emph{$T_L$-strand}. Similarly, we define \emph{$T_R$-strands}. It is possible that an arc is both $T_L$- and $T_R$-strand, in which case it connects the two honeycombs. 
By removing the $T_L$- and $T_R$-strands, the remaining is a collection of (possibly intersecting) oriented curves, which we call the \emph{curve components}.


\begin{dfn}[$\fsl_3$-shear coordinates]
The \emph{($\fsl_3$-)shear coordinate system} 
\begin{align*}
    \sfx^\uf_\tri(\hL) =(\sfx_i^\tri(\hL))_{i \in I_\uf(\tri)} \in \bQ^{I_\uf(\tri)}
\end{align*}
is defined as follows. First, for each $E \in e_{\interior}(\tri)$, the coordinates assigned to the four vertices in the interior of $Q_E$ only depend on the restriction $\cW_{\mathrm{br}}^\tri \cap Q_E$. 
\begin{enumerate}
    \item Each curve component contributes to the edge coordinates according to the rule shown in \cref{fig:shear_curve}.
    \item The honeycomb on the triangle $T_L$ contributes to $\sfx^\uf_\tri(\hL)$ as in \cref{fig:shear_honeycomb}. Namely, the face coordinate counts the height of the honeycomb web, where a sink (resp. source) is counted positively (resp. negatively). 
    The edge coordinates count the contributions from $T_L$-strands, where we have $n_1$ left-turning ones, $n_2$ straight-going ones (which are also $T_R$-strands), and $n_3$ right-turning ones. 
    \item The honeycomb on the triangle $T_R$ and the $T_R$-strands contribute in the symmetric way with respect to the $\pi$ rotation of the figure. 
\end{enumerate}
Then the shear coordinates are defined to be the weighted sums of these contributions. 
\end{dfn}

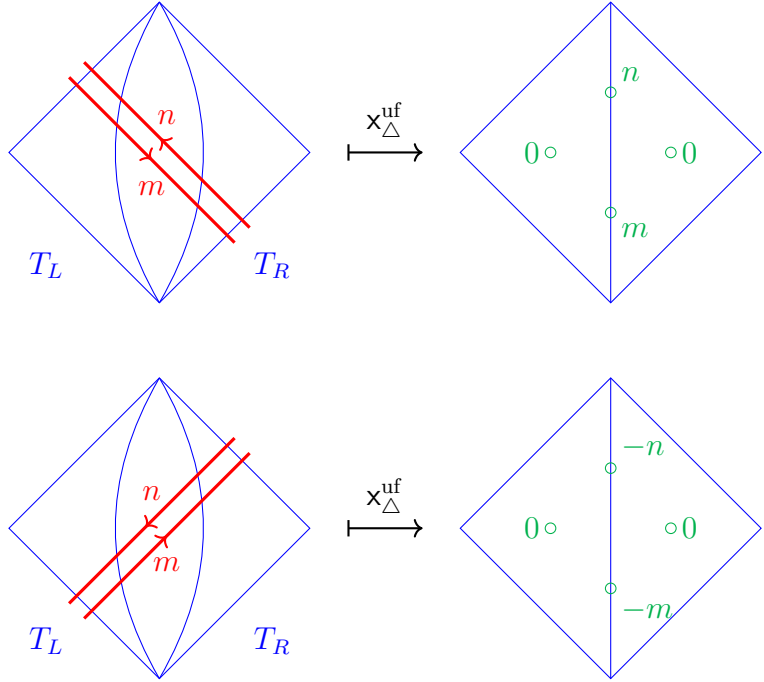
\begin{figure}[tbp]
    \centering
\begin{tikzpicture}
\draw[blue] (2,0) -- (0,2) -- (-2,0) -- (0,-2) --cycle;
\draw[blue] (0,-2) to[bend left=30pt] (0,2);
\draw[blue] (0,-2) to[bend right=30pt] (0,2);
\node[blue] at (-1.5,-1.5) {$T_L$};
\node[blue] at (1.5,-1.5) {$T_R$};
\draw[very thick,red,->-] (-1.2,1) --node[midway,below=0.3em]{$m$} (1,-1.2);
\draw[very thick,red,-<-] (-1,1.2) --node[midway,above=0.3em]{$n$} (1.2,-1);
\draw[thick,|->] (2.5,0) --node[midway,above]{$\sfx^\uf_\tri$} (3.5,0); 
\begin{scope}[xshift=6cm]
\draw[blue] (2,0) -- (0,2) -- (-2,0) -- (0,-2) --cycle;
\draw[blue] (0,-2) to (0,2);
{\color{mygreen}
\draw (0.8,0) circle(2pt) node[right]{$0$};
\draw (-0.8,0) circle(2pt) node[left]{$0$};
\draw (0,0.8) circle(2pt) node[above right]{$n$};
\draw(0,-0.8) circle(2pt)node[below right]{$m$};
}
\end{scope}

\begin{scope}[yshift=-5cm]
\draw[blue] (2,0) -- (0,2) -- (-2,0) -- (0,-2) --cycle;
\draw[blue] (0,-2) to[bend left=30pt] (0,2);
\draw[blue] (0,-2) to[bend right=30pt] (0,2);
\node[blue] at (-1.5,-1.5) {$T_L$};
\node[blue] at (1.5,-1.5) {$T_R$};
\draw[very thick,red,-<-] (-1.2,-1) --node[midway,above=0.3em]{$n$} (1,1.2);
\draw[very thick,red,->-] (-1,-1.2) --node[midway,below=0.3em]{$m$} (1.2,1);
\draw[thick,|->] (2.5,0) --node[midway,above]{$\sfx^\uf_\tri$} (3.5,0); 
{\begin{scope}[xshift=6cm]
\draw[blue] (2,0) -- (0,2) -- (-2,0) -- (0,-2) --cycle;
\draw[blue] (0,-2) to (0,2);
{\color{mygreen}
\draw (0.8,0) circle(2pt) node[right]{$0$};
\draw (-0.8,0) circle(2pt) node[left]{$0$};
\draw (0,0.8) circle(2pt) node[above right]{$-n$};
\draw(0,-0.8) circle(2pt)node[below right]{$-m$};
}
\end{scope}}
\end{scope}
\end{tikzpicture}
    \caption{Contributions from curve components.}
    \label{fig:shear_curve}
\end{figure}

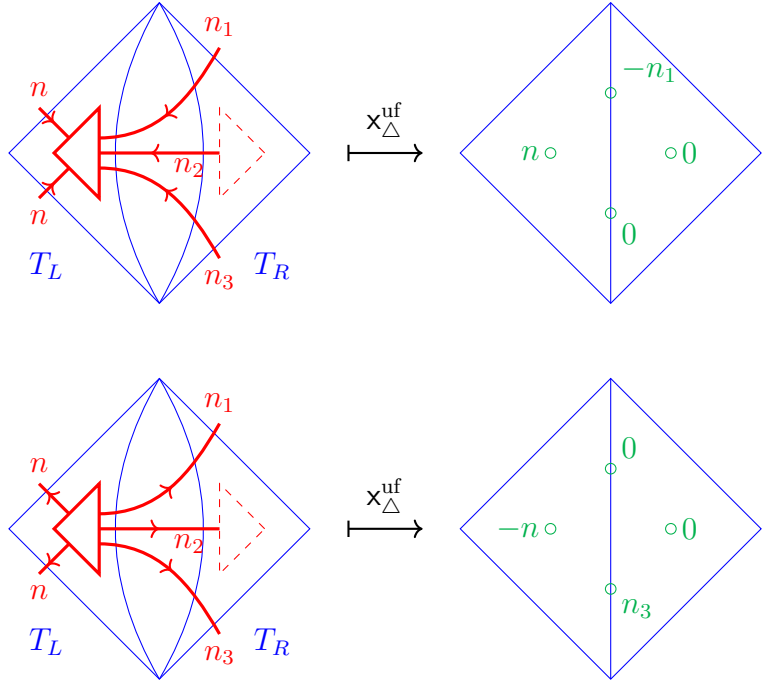
\begin{figure}[htbp]
    \centering
\begin{tikzpicture}
\draw[blue] (2,0) -- (0,2) -- (-2,0) -- (0,-2) --cycle;
\draw[blue] (0,-2) to[bend left=30pt] (0,2);
\draw[blue] (0,-2) to[bend right=30pt] (0,2);
\node[blue] at (-1.5,-1.5) {$T_L$};
\node[blue] at (1.5,-1.5) {$T_R$};
\draw[red,very thick] (-1.4,0) --++(0.6,0.6) --++(0,-1.2) --cycle;
\draw[red,very thick,->-] (-1.6,0.6) node[above]{$n$} -- (-1.2,0.2);
\draw[red,very thick,->-] (-1.6,-0.6) node[below]{$n$} -- (-1.2,-0.2);
\draw[red,dashed] (1.4,0) --++(-0.6,0.6) --++(0,-1.2) --cycle;
\draw[red,very thick,-<-] (-0.8,0) -- (0.8,0);
\node[red] at (0.4,-0.2) {$n_2$};
\draw[red,very thick,-<-] (-0.8,0.2) to[out=0, in=-120] (0.8,1.4) node[above]{$n_1$};
\draw[red,very thick,-<-] (-0.8,-0.2) to[out=0, in=120] (0.8,-1.4) node[below]{$n_3$};
\draw[thick,|->] (2.5,0) --node[midway,above]{$\sfx^\uf_\tri$} (3.5,0); 
\begin{scope}[xshift=6cm]
\draw[blue] (2,0) -- (0,2) -- (-2,0) -- (0,-2) --cycle;
\draw[blue] (0,-2) to (0,2);
{\color{mygreen}
\draw (0.8,0) circle(2pt) node[right]{$0$};
\draw (-0.8,0) circle(2pt) node[left]{$n$};
\draw (0,0.8) circle(2pt) node[above right]{$-n_1$};
\draw(0,-0.8) circle(2pt)node[below right]{$0$};
}
\end{scope}

\begin{scope}[yshift=-5cm]
\draw[blue] (2,0) -- (0,2) -- (-2,0) -- (0,-2) --cycle;
\draw[blue] (0,-2) to[bend left=30pt] (0,2);
\draw[blue] (0,-2) to[bend right=30pt] (0,2);
\node[blue] at (-1.5,-1.5) {$T_L$};
\node[blue] at (1.5,-1.5) {$T_R$};
\draw[red,very thick] (-1.4,0) --++(0.6,0.6) --++(0,-1.2) --cycle;
\draw[red,very thick,-<-] (-1.6,0.6) node[above]{$n$} -- (-1.2,0.2);
\draw[red,very thick,-<-] (-1.6,-0.6) node[below]{$n$} -- (-1.2,-0.2);
\draw[red,dashed] (1.4,0) --++(-0.6,0.6) --++(0,-1.2) --cycle;
\draw[red,very thick,->-] (-0.8,0) -- (0.8,0);
\node[red] at (0.4,-0.2) {$n_2$};
\draw[red,very thick,->-] (-0.8,0.2) to[out=0, in=-120] (0.8,1.4) node[above]{$n_1$};
\draw[red,very thick,->-] (-0.8,-0.2) to[out=0, in=120] (0.8,-1.4) node[below]{$n_3$};
\draw[thick,|->] (2.5,0) --node[midway,above]{$\sfx^\uf_\tri$} (3.5,0); 
{\begin{scope}[xshift=6cm]
\draw[blue] (2,0) -- (0,2) -- (-2,0) -- (0,-2) --cycle;
\draw[blue] (0,-2) to (0,2);
{\color{mygreen}
\draw (0.8,0) circle(2pt) node[right]{$0$};
\draw (-0.8,0) circle(2pt) node[left]{$-n$};
\draw (0,0.8) circle(2pt) node[above right]{$0$};
\draw(0,-0.8) circle(2pt)node[below right]{$n_3$};
}
\end{scope}}
\end{scope}
\end{tikzpicture}
    \caption{Contributions from the honeycomb of height $n=n_1+n_2+n_3$ on the triangle $T_L$. Observe that the $n_2$ straight-going $T_L$-strands do not contribute.}
    \label{fig:shear_honeycomb}
\end{figure}

\begin{rem}\label{rem:decomposition_honeycomb}
 The shear coordinates of the first honeycomb component shown in \cref{fig:shear_honeycomb} is the same as the sum of shear coordinates of the three honeycomb components shown in \cref{fig:shear_honeycomb_decomposition}. 
\end{rem}

\begin{figure}[htbp]
\centering
\begin{tikzpicture}[scale=.9]
\draw[blue] (2,0) -- (0,2) -- (-2,0) -- (0,-2) --cycle;
\draw[blue] (0,-2) to[bend left=30pt] (0,2);
\draw[blue] (0,-2) to[bend right=30pt] (0,2);
\draw[red,very thick] (-1.4,0) --++(0.6,0.6) --++(0,-1.2) --cycle;
\draw[red,very thick,->-] (-1.6,0.6) node[above]{$n_1$} -- (-1.2,0.2);
\draw[red,very thick,->-] (-1.6,-0.6) node[below]{$n_1$} -- (-1.2,-0.2);
\draw[red,very thick,-<-] (-0.8,0) to[out=0, in=-120] (0.8,1.4) node[above]{$n_1$};

{\begin{scope}[xshift=5cm]
\draw[blue] (2,0) -- (0,2) -- (-2,0) -- (0,-2) --cycle;
\draw[blue] (0,-2) to[bend left=30pt] (0,2);
\draw[blue] (0,-2) to[bend right=30pt] (0,2);
\draw[red,very thick] (-1.4,0) --++(0.6,0.6) --++(0,-1.2) --cycle;
\draw[red,very thick,->-] (-1.6,0.6) node[above]{$n_2$} -- (-1.2,0.2);
\draw[red,very thick,->-] (-1.6,-0.6) node[below]{$n_2$} -- (-1.2,-0.2);
\draw[red,dashed] (1.4,0) --++(-0.6,0.6) --++(0,-1.2) --cycle;
\draw[red,very thick,-<-] (-0.8,0) -- (0.8,0) node[below left=0.2em]{$n_2$};
\end{scope}}

{\begin{scope}[xshift=10cm]
\draw[blue] (2,0) -- (0,2) -- (-2,0) -- (0,-2) --cycle;
\draw[blue] (0,-2) to[bend left=30pt] (0,2);
\draw[blue] (0,-2) to[bend right=30pt] (0,2);
\draw[red,very thick] (-1.4,0) --++(0.6,0.6) --++(0,-1.2) --cycle;
\draw[red,very thick,->-] (-1.6,0.6) node[above]{$n_3$} -- (-1.2,0.2);
\draw[red,very thick,->-] (-1.6,-0.6) node[below]{$n_3$} -- (-1.2,-0.2);
\draw[red,very thick,-<-] (-0.8,0) to[out=0, in=120] (0.8,-1.4) node[below]{$n_3$};
\end{scope}}
\end{tikzpicture}
    \caption{Basic honeycomb components}
    \label{fig:shear_honeycomb_decomposition}
\end{figure}
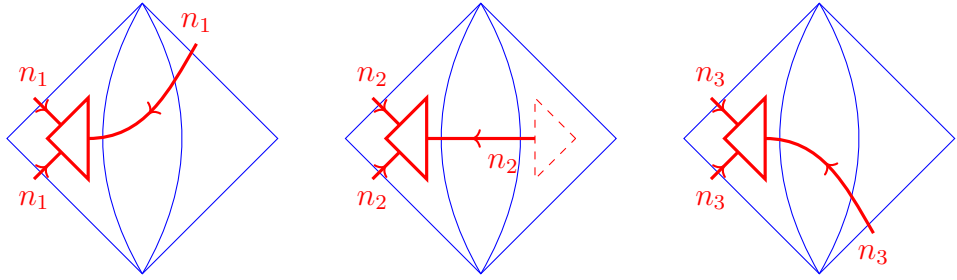

\begin{thm}[{\cite[Theorem 1]{IK22}}]\label{thm:shear_coordinate}
The shear coordinate system $\sfx^\uf_\tri(\hL) \in \bQ^{I_\uf(\tri)}$ gives a well-defined bijection
\begin{align*}
    \sfx^\uf_\tri: \cL^x(\Sigma,\bQ) \to \bQ^{I_\uf(\tri)}.
\end{align*}
Moreover, for any another ideal triangulation $\tri'$ of $\Sigma$, the coordinate transformation $\sfx_{\tri'}\circ \sfx_\tri^{-1}$ is a composite of tropical cluster $\X$-transformations. 
\end{thm}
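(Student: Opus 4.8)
The plan is to establish, in order, (i) that $\sfx^\uf_\tri$ is well defined on $\cL^x(\Sigma,\bQ)$, (ii) that it is a bijection onto $\bQ^{I_\uf(\tri)}$, and (iii) that a flip of $\tri$ induces a composite of tropical cluster $\X$-transformations; I expect (ii) to be the technical heart. For (i), the value $\sfx^\uf_\tri(\hL)$ is read off from a braid representative $\cW_{\mathrm{br}}^\tri$ of a good-position spiralling diagram, so I would first show independence of these auxiliary choices. By \cite[Theorem 3.10]{IK22} any two good positions of a fixed spiralling diagram differ by a finite sequence of elementary moves supported inside the biangles and triangles of $\widehat{\tri}$, and I would check move-by-move that the four edge/face contributions attached to every quadrilateral $Q_E$ (as dictated by \cref{fig:shear_curve,fig:shear_honeycomb}) are unchanged. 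I would then verify invariance under the defining equivalence of $\cL^x(\Sigma,\bQ)$: the moves (E1), (E2) can be isotoped into a single triangle or biangle and checked locally; (E3) and (E4) are supported near a puncture and only modify the spiralling pattern in a contribution-preserving way; and the weight operations (2)--(4) are compatible because the coordinate is a weighted sum and $n$-cabling scales every local contribution by the same factor.

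For (ii) I would construct an explicit inverse. Because the contribution rules are additive over disjoint strands, the four numbers attached to the interior vertices of a quadrilateral $Q_E$ determine the honeycomb heights on $T_L$ and $T_R$ together with the counts $n_1,n_2,n_3$ of left-turning, straight, and right-turning $T_L$-strands (and symmetrically for $T_R$); inverting \cref{fig:shear_curve,fig:shear_honeycomb} thus recovers the isotopy type of $\cW_{\mathrm{br}}^\tri \cap Q_E$. The classification of unbounded essential webs on biangles and of reduced essential webs on triangles \cite[Section 3]{IK22} guarantees that exactly these local pieces occur, so the reconstruction is consistent locally. The remaining issue is gluing across a shared biangle: the strand multiplicities read off from the two quadrilaterals meeting along an edge must agree, which I would show is automatic since the biangle content is already pinned down by the common edge coordinates. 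This produces a good-position spiralling diagram, and un-spiralling near each puncture (recording the induced signs) yields an admissible non-elliptic signed web realizing the given coordinates, unique up to equivalence by (i). Injectivity and surjectivity then follow simultaneously.

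For (iii), since any two ideal triangulations without self-folded triangles are connected by a finite sequence of flips, it suffices to analyze a single flip $\tri \to \tri'$ at an interior edge $E$. I would fix a representative in good position adapted to both triangulations away from $E$, so that only the content inside the flipped region changes, and compute the induced change of the coordinates at the affected vertices directly from the figures. The target is the explicit finite sequence of quiver mutations of $Q^\tri$ that realizes an $\fsl_3$-flip; I would match the resulting piecewise-linear map with the corresponding composite of tropical $\X$-mutations, each of which negates its mutated coordinate, $\sfx_k \mapsto -\sfx_k$, and modifies the others by the piecewise-linear rule recalled in \cref{sec:appendix}.

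The main obstacle I anticipate is the surjectivity half of (ii): one must ensure that the locally reconstructed pieces assemble into a genuinely \emph{global}, admissible, non-elliptic signed web---with the correct spiralling and sign data at every puncture---for an arbitrary coordinate vector, not merely a formal distribution of strand multiplicities. Verifying that the across-biangle gluing constraints are always solvable and that the outcome can be isotoped to a non-elliptic representative is the delicate step; by contrast, the transformation formula in (iii), once the flip region is isolated, reduces to a finite local computation.
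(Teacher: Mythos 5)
This statement is imported verbatim from the earlier paper (it is \cite[Theorem 1]{IK22}); the present paper gives no proof of it, only the citation, so there is no in-paper argument to compare yours against line by line. That said, your outline is consistent with the strategy that the surrounding text attributes to \cite{IK22}: well-definedness via invariance of the local contributions under changes of good position and under the elementary moves (cf.\ the reference to \cite[Proposition 3.14]{IK22}), bijectivity via local reconstruction of the essential webs on triangles and biangles from the face and edge coordinates, and the flip formula via the explicit mutation sequence recorded in the appendix (\cref{fig:flip sequence}). You have also correctly located the delicate point, namely the surjectivity half: showing that the locally reconstructed pieces glue to a global admissible non-elliptic signed web, with the spiralling/sign data at punctures recovered consistently, is exactly where the technical work in \cite[Section 3]{IK22} lies, and your sketch of that step is a plan rather than a proof. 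Within the scope of a blind reconstruction, nothing in your proposal points in a wrong direction.
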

As a consequence, the shear coordinates combine to give an $MC(\Sigma)$-equivariant bijection 
\begin{align}\label{eq:shear_combined_X}
    \sfx^\uf_\bullet: \cL^x(\Sigma,\bQ) \xrightarrow{\sim} \X_{\fsl_3,\Sigma}^\uf(\bQ^T).
\end{align}
In other words, the space $\cL^x(\Sigma,\bQ)$ can be viewed as a tropical analogue of the moduli space $\X_{PGL_3,\Sigma}$ of framed $PGL_3$-local systems \cite{FG03} with respect to its positive structure.

\begin{conv}\label{conv:shear_coordinates}
We will write $\sfx^\tri_T:=\sfx^\tri_{i(T)}$ for a triangle $T$ of $\tri$, and $\sfx^\tri_{E,s}:=\sfx^\tri_{i^s(E)}$ for an oriented edge $E$ of $\tri$ and $s=1,2$. Here recall the notations in \cref{subsec:notation_marked_surface}.
\end{conv}

\paragraph{\textbf{Dynkin involution}}
At the rest of this section, we recall the equivariance of the shear coordinates under the Dynkin involution (\cref{def:Dynkin_geometric}). 
The \emph{cluster action} $\ast_\tri$ (see the last paragraph of \cref{sec:appendix}) of the Dynkin involution in the cluster chart associated to $\tri$ is given by some mutation sequence, and it induces the tropical cluster $\X$-transformation
\begin{align*}
    \ast_\tri^x: 
    \sfx_T &\mapsto -\sfx_T, & \mbox{for $T \in t(\tri)$}, \\
    \sfx_{E,1} &\mapsto \sfx_{E,2}+[\sfx_{T_L}]_+ -[-\sfx_{T_R}]_+, &\\
    \sfx_{E,2} &\mapsto \sfx_{E,1}+[\sfx_{T_R}]_+ -[-\sfx_{T_L}]_+ & \mbox{for $E \in e(\tri)$}.
\end{align*}

\begin{prop}[{\cite[Proposition 4.13]{IK22}}]\label{prop:Dynkin-cluster}
We have the commutative diagram
\begin{equation*}
    \begin{tikzcd}
    \cL^x(\Sigma,\bQ) \ar[r,"\sfx_\tri"] \ar[d,"\ast"'] & \bQ^{I_\uf(\tri)}  \ar[d,"\ast_\tri"] \\
    \cL^x(\Sigma,\bQ) \ar[r,"\sfx_\tri"'] & \bQ^{I_\uf(\tri)}. 
    \end{tikzcd}
\end{equation*}
In particular, the orientation-reversing action of the Dynkin involution coincides with the cluster action.
\end{prop}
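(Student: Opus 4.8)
The plan is to verify the identity $\sfx_\tri(\hL^\ast)=\ast^x_\tri\bigl(\sfx_\tri(\hL)\bigr)$ locally, one quadrilateral at a time. By \cref{thm:shear_coordinate} the map $\sfx_\tri$ is a bijection, and by construction the four coordinates attached to the interior vertices of each $Q_E$ depend only on the restriction $\cW_{\mathrm{br}}^\tri\cap Q_E$ of the braid representative. The Dynkin involution $\ast$ of \cref{def:Dynkin_geometric} reverses the orientation of every component while keeping the signs at punctures intact; since the spiralling direction is governed by the signs and the notions of essential and reduced essential webs concern only the unoriented shapes (ladder webs, honeycombs, corner arcs), reversing all arrows leaves a good position in good position with the same combinatorial type. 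Hence $\cW_{\mathrm{br}}^\tri(\hL^\ast)$ is obtained from $\cW_{\mathrm{br}}^\tri(\hL)$ by reversing arrows only, and it suffices to check, for each elementary local contribution to $Q_E$, that orientation reversal transforms $(\sfx_{E,1},\sfx_{E,2},\sfx_{T_L},\sfx_{T_R})$ exactly by the displayed formula for $\ast^x_\tri$. Using additivity of the shear coordinates in the weights together with the decomposition of honeycombs in \cref{rem:decomposition_honeycomb}, these contributions are generated by curve components and by the three basic honeycomb components.

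I would first dispose of the curve components. A curve contributes $0$ to both triangle coordinates $\sfx_{T_L},\sfx_{T_R}$, so all the terms $[\sfx_{T_L}]_+,[-\sfx_{T_R}]_+,[\sfx_{T_R}]_+,[-\sfx_{T_L}]_+$ appearing in $\ast^x_\tri$ vanish and the rule collapses to $\sfx_T\mapsto-\sfx_T$ together with the exchange $\sfx_{E,1}\leftrightarrow\sfx_{E,2}$. On the geometric side, reversing the orientation of a curve crossing $Q_E$ interchanges its two orientation types, which by \cref{fig:shear_curve} interchanges precisely the contributions to $\sfx_{E,1}$ and $\sfx_{E,2}$ and leaves the vanishing triangle coordinates untouched. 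Thus the two sides agree on curves.

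The honeycomb components are the heart of the argument. Reversing orientation converts a sink honeycomb into a source honeycomb of the same underlying shape, so its height contribution to $\sfx_{T_L}$ (or $\sfx_{T_R}$) changes sign, matching $\sfx_T\mapsto-\sfx_T$. It remains to match the edge-coordinate redistribution read off from \cref{fig:shear_honeycomb} against the tropical edge rule, and here the nonlinearity is essential: the terms $[\sfx_{T_L}]_+$ and $[-\sfx_{T_L}]_+$ select different branches according to whether the honeycomb on $T_L$ is a sink ($\sfx_{T_L}\ge 0$) or a source ($\sfx_{T_L}\le 0$), and likewise for $T_R$. I would therefore run the verification branch by branch, confirming in each case that the relevant positive part picks out exactly the height of the honeycomb responsible for the edge contribution, while the straight-going strands feed into the height but not into the edge coordinates. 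By additivity in the weights this upgrades to an arbitrary configuration on $Q_E$, and running over all $E\in e_{\interior}(\tri)$ yields commutativity of the square.

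The main obstacle I anticipate is the interaction across the shared edge $E$, namely the strands that are simultaneously $T_L$- and $T_R$-strands and so connect the two honeycombs: under orientation reversal these couple to both triangle coordinates at once, and one must confirm that the two nonlinear corrections in the formulas for $\sfx_{E,1}$ and $\sfx_{E,2}$ conspire, within the correct sink/source branch, to reproduce the edge contributions of the reversed configuration—this is precisely where the tropical $[\,\cdot\,]_+$ does nontrivial work and where a naive term-by-term comparison fails because $\ast^x_\tri$ is not additive. Once this branchwise bookkeeping is settled for curves and basic honeycombs, the commutative square follows, and the final sentence of the statement is then immediate, since $\ast^x_\tri$ is by definition the tropicalization of the cluster action $\ast_\tri$ realizing the Dynkin involution as a mutation sequence.
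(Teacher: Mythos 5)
First, a point of reference: this paper does not prove \cref{prop:Dynkin-cluster} at all — it is imported verbatim from \cite[Proposition 4.13]{IK22} — so there is no in-paper proof to compare against. Your strategy (reduce to the quadrilateral $Q_E$, use that orientation reversal with signs fixed preserves good position and merely flips arrows in $\cW^\tri_{\mathrm{br}}$, then check curves and honeycombs against the tropical formula) is the natural one and is consistent with the machinery the paper sets up. The curve case and the final sentence are handled correctly.

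There are, however, two concrete gaps. (i) Your upgrade from elementary pieces to a general configuration rests on ``additivity of the shear coordinates in the weights,'' but $\ast_\tri^x$ is only piecewise linear, so additivity of the coordinates does not transport a componentwise check to a sum of components. What makes the reduction legitimate is the sign coherence of \cref{lem:decomposition_positive} (and the remark following \cref{prop:classification}): all elementary contributions to a given face coordinate share a sign, so the whole configuration sits in a single linearity domain of $\ast_\tri^x$, whose bending loci are $\sfx_{T_L}=0$ and $\sfx_{T_R}=0$. You gesture at ``the correct sink/source branch'' but never state why the branch is well defined for the sum; this must be said. (ii) The honeycomb verification, which is the entire substance of the proposition, is deferred rather than carried out, and it is precisely where a naive per-component check fails. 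Concretely, take a height-$n$ sink honeycomb on $T_L$ with $n_2$ straight strands: by \cref{fig:shear_honeycomb} these contribute $0$ to both edge coordinates, yet the correction $[\sfx_{T_L}]_+$ in $\ast_\tri^x$ feeds the full height $n=n_1+n_2+n_3$ into $\sfx_{E,1}$, so the reversed configuration (which contributes only $n_3$ there) does not match unless one simultaneously includes the honeycomb on $T_R$ to which the $n_2$ straight strands attach, whose term $-[-\sfx_{T_R}]_+$ cancels the excess. In other words, the ``basic honeycomb components'' of \cref{rem:decomposition_honeycomb} are not individually closed under the commutativity check; only complete local configurations on $Q_E$ (the elementary braids of \cref{prop:classification}, honeycombs on both triangles included) are. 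Your proposal correctly identifies this coupling as the main obstacle, but until that finite case check is actually performed the argument is an outline, not a proof.
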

\section{Structure of unbounded \texorpdfstring{$\fsl_3$}{sl(3)}-laminations around punctures}\label{sec:structure}

\subsection{Classification of unbounded \texorpdfstring{$\fsl_3$}{sl(3)}-laminations around punctures}\label{sec:ensemble}
In this section, we focus on the rational unbounded $\fsl_3$-laminations around a puncture as a preparation for the study of the cluster exact sequence and the Weyl group actions at punctures in the subsequent sections. 

Let us first clarify the elementary pieces that we are going to classify. 
Fix an ideal triangulation $\tri$ of a marked surface $\Sigma$. 
Given a non-elliptic signed web $W$ on $\Sigma$, let $\cW$ be the associated spiralling diagram in a good position with respect to $\widehat{\tri}$, and $\cW^\tri_{\mathrm{br}}$ its braid representation. Recall that each of the shear coordinates of $W$ is defined to be a sum of contributions from the components of $\cW^\tri_{\mathrm{br}}$ (\cref{fig:shear_curve,fig:shear_honeycomb}). 

In view of the rules in \cref{fig:shear_honeycomb}, we can further decompose the contributions from honeycomb components as follows. For each triangle $T \in t(\tri)$, split a honeycomb of height $n$ into a union of $n$ honeycombs of height one. See \cref{fig:braid_decomp} for an example. Then $\cW^\tri_{\mathrm{br}}$ is decomposed into a union $\cW^\tri_{\mathrm{br}}=\bigcup_{\alpha} \cW_\alpha$ of webs only with honeycombs of height one. 
Let us call each web $\cW_\alpha$ appearing in this decomposition an \emph{elementary braid} with respect to $\tri$. Let $W_\alpha$ denote the signed web corresponding to $\cW_\alpha$.

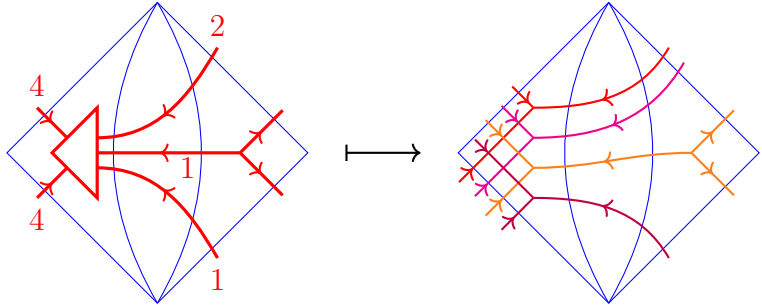
\begin{figure}[ht]
\begin{tikzpicture}
\draw[blue] (2,0) -- (0,2) -- (-2,0) -- (0,-2) --cycle;
\draw[blue] (0,-2) to[bend left=30pt] (0,2);
\draw[blue] (0,-2) to[bend right=30pt] (0,2);
\draw[red,very thick] (-1.4,0) --++(0.6,0.6) --++(0,-1.2) --cycle;
\draw[red,very thick,->-] (-1.6,0.6) node[above]{$4$} -- (-1.2,0.2);
\draw[red,very thick,->-] (-1.6,-0.6) node[below]{$4$} -- (-1.2,-0.2);
\draw[red,very thick,-<-] (-0.8,0) -- (1.1,0);
\draw[red,very thick,->-] (1.1,0) --++(45:0.8);
\draw[red,very thick,->-] (1.1,0) --++(-45:0.8);
\node[red] at (0.4,-0.2) {$1$};
\draw[red,very thick,-<-] (-0.8,0.2) to[out=0, in=-120] (0.8,1.4) node[above]{$2$};
\draw[red,very thick,-<-] (-0.8,-0.2) to[out=0, in=120] (0.8,-1.4) node[below]{$1$};
\draw[thick,|->] (2.5,0) -- (3.5,0); 
\begin{scope}[xshift=6cm]
\draw[blue] (2,0) -- (0,2) -- (-2,0) -- (0,-2) --cycle;
\draw[blue] (0,-2) to[bend left=30pt] (0,2);
\draw[blue] (0,-2) to[bend right=30pt] (0,2);
\draw[red,thick,-<-] (-1.0,0.6) --++(135:0.4);
\draw[red,thick,-<-={0.9}{}] (-1.0,0.6) --++(-135:1.4);
\draw[red,thick,-<-] (-1.0,0.6) to[out=0,in=-120] (0.8,1.4);
\draw[magenta,thick,-<-={0.7}{}] (-1.0,0.2) --++(135:0.6);
\draw[magenta,thick,-<-={0.9}{}] (-1.0,0.2) --++(-135:1.1);
\draw[magenta,thick,-<-] (-1.0,0.2) to[out=0,in=-120] (1.0,1.2);
\draw[myorange,thick,-<-={0.8}{}] (-1.0,-0.2) --++(135:0.9);
\draw[myorange,thick,-<-={0.8}{}] (-1.0,-0.2) --++(-135:0.9);
\draw[myorange,thick,-<-] (-1.0,-0.2) to[out=0,in=180] (1.1,0);
\draw[purple,thick,-<-={0.9}{}] (-1.0,-0.6) --++(135:1.1);
\draw[purple,thick,-<-={0.8}{}] (-1.0,-0.6) --++(-135:0.6);
\draw[purple,thick,-<-] (-1.0,-0.6) to[out=0,in=120] (0.8,-1.4);
\draw[myorange,thick,->-] (1.1,0) --++(45:0.8);
\draw[myorange,thick,->-] (1.1,0) --++(-45:0.8);
\end{scope}
\end{tikzpicture}
    \caption{Decomposition of a braid component into elementary braids.}
    \label{fig:braid_decomp}
\end{figure}

\begin{lem}\label{lem:decomposition_positive}
The shear coordinates of $W$ can be computed as sums of contributions from elementary braids:
\begin{align*}
    \sfx_i^\tri(W) = \sum_\alpha \sfx_i^\tri(W_\alpha)
\end{align*}
for $i \in I_\uf(\tri)$. 
Moreover, the signs of shear coordinates are coherent: $\sfx_i^\tri(W_\alpha)\cdot \sfx_i^\tri(W_{\alpha'}) \geq 0$ for $i \in I(\tri)$ and $\alpha \neq \alpha'$.
\end{lem}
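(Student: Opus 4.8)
The plan is to deduce both statements from the local contribution rules recalled in \cref{fig:shear_curve,fig:shear_honeycomb}, which present $\sfx_i^\tri(W)$ as a weighted sum of contributions of the components of $\cW^\tri_{\mathrm{br}}\cap Q_E$ over the quadrilaterals $Q_E$. Since the decomposition $\cW^\tri_{\mathrm{br}}=\bigcup_\alpha\cW_\alpha$ only partitions the curve components and splits each height-$n$ honeycomb into $n$ height-one honeycombs, the first thing I would record is that each $\cW_\alpha$ is again the braid representation of a non-elliptic signed web in good position with respect to $\widehat{\tri}$: its intersection with each biangle is a sub-braid of a symmetric strand set, hence again essential, and its intersection with each triangle is a single height-one honeycomb together with non-crossing corner arcs (\cref{fig:essential_webs}). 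Thus $\sfx_i^\tri(W_\alpha)$ may be read off from $\cW_\alpha$ by the same rules.

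For the additivity I would work vertex by vertex in a fixed $Q_E$. The curve contributions are additive by inspection, as each curve component belongs to a single $\cW_\alpha$ and contributes there exactly as it does in $\cW^\tri_{\mathrm{br}}$. For the honeycomb on $T_L$, the contribution in \cref{fig:shear_honeycomb} is linear in the turning counts $n_1,n_2,n_3$; combined with \cref{rem:decomposition_honeycomb} this shows that it equals the sum of the contributions of its $n$ height-one pieces, which are distributed among the $\cW_\alpha$. The same holds for $T_R$ by the symmetric rule, and summing over the interior vertices of $Q_E$ and over $E$ yields $\sfx_i^\tri(W)=\sum_\alpha\sfx_i^\tri(W_\alpha)$.

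For the sign coherence I would distinguish the two kinds of vertices. When $i=i(T)$ is a triangle vertex, only a honeycomb on $T$ contributes, with sign $+$ if it is a sink and $-$ if it is a source. Because $W$ is a single web in good position it carries at most one honeycomb on $T$, of a single type, so all of its height-one pieces, and hence all the nonzero $\sfx_i^\tri(W_\alpha)$, carry that common sign, while the remaining $\cW_\alpha$ contribute $0$. The substantial case is an edge vertex $i=i^s(E)$, where the contributions come from the curve components crossing $Q_E$ and from the turning legs of the honeycombs on $T_L$ and $T_R$; here I must show that every strand contributing a nonzero amount at $i$ does so with the same sign.

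The main obstacle is exactly this edge-vertex coherence. Reading off \cref{fig:shear_curve,fig:shear_honeycomb}, an oppositely-signed pair of contributions at $i^s(E)$ would require either two curve components of opposite slope (one NW--SE and one NE--SW strand) or a turning honeycomb leg opposing a curve; in each case the two strands would be forced to cross inside $Q_E$ in a way incompatible with the non-ellipticity of $W$ and the reducedness of the good position. To turn this into a proof I would use that $\cW\cap B_E$ is an asymptotically periodic symmetric strand set, whose strands are coherently sloped, so that only one slope occurs among the curve components of $\cW^\tri_{\mathrm{br}}\cap Q_E$, and that on each triangle the reduced essential web has a single honeycomb whose sink/source type pins down the sign of its turning legs to agree with the curves. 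The technical heart is the bookkeeping of orientations in this case analysis, which I would shorten using the $T_L\leftrightarrow T_R$ symmetry and the Dynkin (orientation-reversing) involution of \cref{prop:Dynkin-cluster} to cut down the number of configurations.
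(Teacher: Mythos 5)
Your overall strategy is the same as the paper's: additivity from the linearity of the honeycomb rule (via \cref{rem:decomposition_honeycomb}), face-coordinate coherence from the fact that each triangle carries at most one honeycomb of a single sink/source type, and edge-coordinate coherence from a crossing argument inside $Q_E$. The first two steps are fine as you state them.

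The gap is in your justification of the edge-coordinate coherence. The claim you propose to use --- that the strands of $\cW\cap B_E$ are ``coherently sloped, so that only one slope occurs among the curve components of $\cW^\tri_{\mathrm{br}}\cap Q_E$'' --- is false: the braid representative of an essential web on a biangle genuinely contains crossings (see the left/middle pictures of \cref{fig:essential_webs}), so NW--SE and SW--NE curves can and do coexist in the same $Q_E$. Likewise, ``non-ellipticity of $W$'' and ``reducedness of the good position'' are not the relevant constraints here: non-ellipticity concerns faces of the web and reducedness concerns H-faces, and neither forbids a transversal crossing of two curve components inside $B_E$. What actually saves the argument --- and is the paper's point --- is an orientation statement, not a slope statement: by the construction of the braid representative from the ladder-web of an asymptotically periodic symmetric strand set, every crossing in $B_E$ is between two strands traversing the biangle in \emph{opposite} directions. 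Reading off \cref{fig:shear_curve,fig:shear_honeycomb}, two strands contributing with opposite signs to the \emph{same} edge vertex $i^s(E)$ necessarily have opposite slopes \emph{and} the same orientation through $B_E$; they would therefore produce a same-orientation crossing, which is exactly what the braid construction excludes. (Oppositely oriented crossing strands are allowed, but they contribute to different vertices, so they cause no sign conflict.) This corrected statement also handles your ``honeycomb leg opposing a curve'' case and the case of two opposing honeycomb legs uniformly, which your slope-based claim does not. With this substitution your proof matches the paper's.
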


\begin{proof}
The first assertion follows from \cref{rem:decomposition_honeycomb}. The signs of face coordinates $\sfx_T^\tri(W_\alpha)$, $T \in t(\tri)$ are coherent since the original web $W$ has a sink (resp. source) honeycomb if and only if each $W_\alpha$ has a sink (resp. source) honeycomb. 

For the edge coordinates, let us focus on the quadrilateral shown in \cref{fig:shear_curve,fig:shear_honeycomb}. If there were two elementary braids which contribute positively and negatively to the same edge coordinate, then they would have an intersection inside the bigon $B_E$ with the same orientation (in the sense `left-to-right' or `right-to-left'). It contradicts to the construction of the braid representative (see also the left picture in \cref{fig:essential_webs}). Hence we have $\sfx_i^\tri(W_\alpha)\cdot \sfx_i^\tri(W_{\alpha'}) \geq 0$ for all $i \in I_\uf(\tri)$.
\end{proof}

Let $D^\ast$ be a once-punctured disk with two special points, together with the ideal triangulation shown in \cref{fig:Casimir_check}. 

\begin{prop}[Classification of elementary braids on $D^\ast$]\label{prop:classification}
All possible elementary braids on $D^\ast$ are the eleven ones shown in \cref{fig:Casimir_check}, up to orientation-reversion and the $\pi$-rotation.
\end{prop}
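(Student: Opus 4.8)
The plan is to read off the possible elementary braids from the local normal form of a spiralling diagram in good position, exploiting that $D^\ast$ is combinatorially tiny. First I would record the combinatorial data: by the Euler-characteristic counts in \cref{subsec:notation_marked_surface}, with $\chi(\Sigma^\ast)=0$ and $|\bM_\partial|=2$, the triangulation of $D^\ast$ has exactly two triangles and two interior edges, and in the standard triangulation both interior edges join a special point to the unique puncture. I would fix the triangulation as in \cref{fig:Casimir_check}, so that the two triangles $T_L,T_R$ share the two interior edges emanating from the puncture.

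Next I would invoke the local structure of an unbounded essential web in good position with respect to $\widehat{\tri}$: on each triangle it is an unbounded reduced essential web, namely at most one honeycomb together with semi-infinite asymptotically periodic families of corner arcs around each corner, and on each biangle $B_E$ it is a ladder web. Because an elementary braid carries a honeycomb of height at most one on each triangle, the honeycomb datum on each of $T_L,T_R$ is one of exactly three options: empty, a single height-one sink, or a single height-one source. The only remaining freedom is the corner-arc families (which encode the spiralling around the puncture corner) together with the ladder webs on the two biangles.

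Then I would glue. The local pieces on $T_L$ and $T_R$ must match along the two shared interior edges, so the orientations and the numbers of incident strands have to agree across each biangle, and the resulting diagram must be non-elliptic and admissible, i.e.\ contain no bad ends in the sense of \eqref{eq:puncture-admissible}. The elementarity constraint, that each $\cW_\alpha$ is a connected piece of the height-one decomposition, caps the number of strands crossing each edge, so only finitely many matchings survive. I would enumerate these case by case, organized by the honeycomb content (zero, one, or two height-one honeycombs) and, within each case, by the orientation and spiralling type at the puncture, being careful to include the configurations with a peripheral loop around the puncture and those whose ends terminate on the boundary intervals.

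Finally I would pass to the quotient by orientation-reversion and by the $\pi$-rotation exchanging $T_L$ and $T_R$, and verify that the surviving configurations are precisely the eleven listed in \cref{fig:Casimir_check}; their mutual distinctness can be confirmed from their shear coordinates via the injectivity in \cref{thm:shear_coordinate}. The main obstacle is the exhaustiveness of this finite enumeration: one must track the corner-arc (spiralling) data finely enough that genuinely distinct laminations are separated while symmetric ones are correctly identified, and one must ensure that no gluing is overlooked, especially the borderline cases mixing a height-one honeycomb on one triangle with corner arcs wrapping the puncture on the other.
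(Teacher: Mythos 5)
Your proposal is correct and follows essentially the same route as the paper: the paper also cuts $D^\ast$ along an interior edge into a quadrilateral, uses the good-position structure (at most one honeycomb per triangle, height one for elementary braids, the rest curve components), and enumerates the connection patterns across the edge organized by the number of honeycomb components (zero, one, or two), which yields exactly the eleven diagrams of \cref{fig:Casimir_check}. The extra check of distinctness via shear coordinates is a harmless addition not present in the paper's argument.
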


\begin{proof}
By cutting $D^\ast$ along one of the interior edges, say $E$, we get a quadrilateral. Then by the discussion in the beginning of \cref{subsec:shear}, $\cW_{\mathrm{br}} \setminus E$ has at most two honeycomb components, and the other parts are curve components. 
For elementary braids, these honeycomb components have height $1$. 
Then we consider all possible patterns to connect them across the edge $E$. Those arising from the curve components are listed in the top row of \cref{fig:Casimir_check}. 
The second and third rows exhaust all the patterns with one and two honeycomb components, respectively.
\end{proof}

\begin{rem}
Thanks to the sign coherence statement in \cref{lem:decomposition_positive}, we can reduce the computations of a piecewise linear map (such as a cluster transformation) to that for the elementary braids, whenever its bending loci are of the form $\sfx_i^\tri=0$. It will be useful, for example in the proof of \cref{thm:Weyl_action}.
\end{rem}

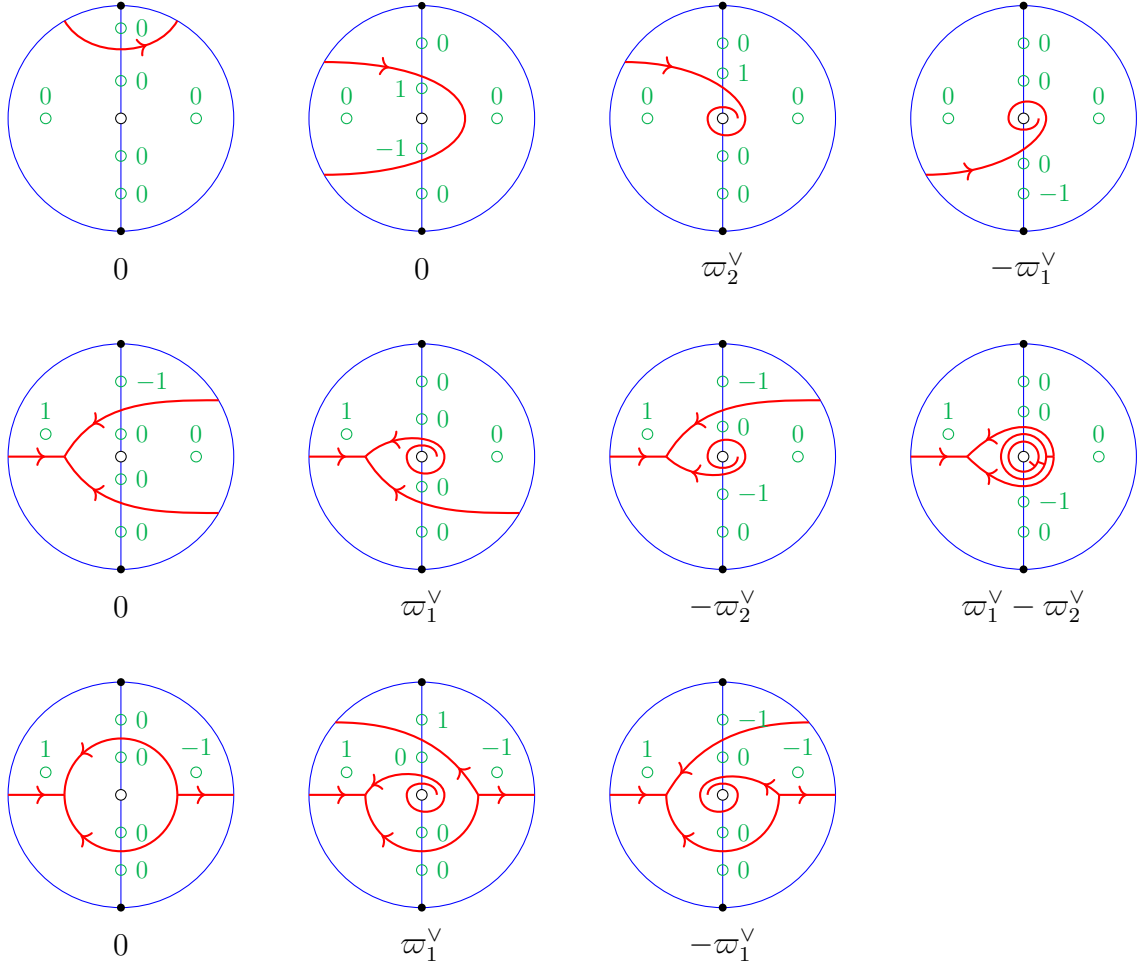
\begin{figure}[ht]
\begin{tikzpicture}
\draw[blue] (0,0) circle(1.5cm);
\draw[blue] (0,1.5) -- (0,-1.5);
\fill(0,1.5) circle(1.5pt);
\fill(0,-1.5) circle(1.5pt);
\filldraw[fill=white](0,0) circle(2pt);
\draw[red,thick,->-={0.7}{}] (120:1.5) to[out=-60,in=240] (60:1.5);
\node at (0,-2) {$0$};
{\color{mygreen}
\draw (0,0.5) circle(2pt) node[right=0.2em,scale=0.8]{$0$};
\draw (0,1.2) circle(2pt) node[right=0.2em,scale=0.8]{$0$};
\draw (0,-0.5) circle(2pt) node[right=0.2em,scale=0.8]{$0$};
\draw (0,-1) circle(2pt) node[right=0.2em,scale=0.8]{$0$};
\draw (1,0) circle(2pt) node[above=0.2em,scale=0.8]{$0$};
\draw (-1,0) circle(2pt) node[above=0.2em,scale=0.8]{$0$};
}
{\begin{scope}[xshift=4cm]
\draw[blue] (0,0) circle(1.5cm);
\draw[blue] (0,1.5) -- (0,-1.5);
\node[fill,circle,inner sep=1pt] at (0,1.5) {};
\node[fill,circle,inner sep=1pt] at (0,-1.5) {};
\filldraw[fill=white](0,0) circle(2pt);
\draw[red,thick,->-={0.2}{}] (150:1.5) ..controls ++(2.5,0) and ($(210:1.5)+(2.5,0)$).. (210:1.5);
\node at (0,-2) {$0$};
{\color{mygreen}
\draw (0,0.4) circle(2pt) node[left=0.2em,scale=0.8]{$1$};
\draw (0,1) circle(2pt) node[right=0.2em,scale=0.8]{$0$};
\draw (0,-0.4) circle(2pt) node[left=0.2em,scale=0.8]{$-1$};
\draw (0,-1) circle(2pt) node[right=0.2em,scale=0.8]{$0$};
\draw (1,0) circle(2pt) node[above=0.2em,scale=0.8]{$0$};
\draw (-1,0) circle(2pt) node[above=0.2em,scale=0.8]{$0$};
}
\end{scope}}
{\begin{scope}[xshift=8cm]
\draw[blue] (0,0) circle(1.5cm);
\draw[blue] (0,1.5) -- (0,-1.5);
\fill(0,1.5) circle(1.5pt);
\fill(0,-1.5) circle(1.5pt);
\filldraw[fill=white](0,0) circle(2pt);
\draw[red,thick,->-={0.2}{}] (150:1.5) ..controls ++(1,0) and (0.3,0.3).. (0.3,0)
..controls (0.3,-0.3) and (-0.2,-0.3).. (-0.2,0)
..controls (-0.2,0.2) and (0.2,0.2).. (0.2,0);
\node at (0,-2) {$\varpi^\vee_2$};
{\color{mygreen}
\draw (0,0.6) circle(2pt) node[right=0.2em,scale=0.8]{$1$};
\draw (0,1) circle(2pt) node[right=0.2em,scale=0.8]{$0$};
\draw (0,-0.5) circle(2pt) node[right=0.2em,scale=0.8]{$0$};
\draw (0,-1) circle(2pt) node[right=0.2em,scale=0.8]{$0$};
\draw (1,0) circle(2pt) node[above=0.2em,scale=0.8]{$0$};
\draw (-1,0) circle(2pt) node[above=0.2em,scale=0.8]{$0$};
}
\end{scope}}
{\begin{scope}[xshift=12cm]
\draw[blue] (0,0) circle(1.5cm);
\draw[blue] (0,1.5) -- (0,-1.5);
\fill(0,1.5) circle(1.5pt);
\fill(0,-1.5) circle(1.5pt);
\filldraw[fill=white](0,0) circle(2pt);
\draw[red,thick,->-={0.2}{}] (210:1.5) ..controls ++(1,0) and (0.3,-0.3).. (0.3,0)
..controls (0.3,0.3) and (-0.2,0.3).. (-0.2,0)
..controls (-0.2,-0.2) and (0.2,-0.2).. (0.2,0);
\node at (0,-2) {$-\varpi^\vee_1$};
{\color{mygreen}
\draw (0,0.5) circle(2pt) node[right=0.2em,scale=0.8]{$0$};
\draw (0,1) circle(2pt) node[right=0.2em,scale=0.8]{$0$};
\draw (0,-0.6) circle(2pt) node[right=0.2em,scale=0.8]{$0$};
\draw (0,-1) circle(2pt) node[right=0.2em,scale=0.8]{$-1$};
\draw (1,0) circle(2pt) node[above=0.2em,scale=0.8]{$0$};
\draw (-1,0) circle(2pt) node[above=0.2em,scale=0.8]{$0$};
}
\end{scope}}

{\begin{scope}[yshift=-4.5cm]
\draw[blue] (0,0) circle(1.5cm);
\draw[blue] (0,1.5) -- (0,-1.5);
\fill(0,1.5) circle(1.5pt);
\fill(0,-1.5) circle(1.5pt);
\filldraw[fill=white](0,0) circle(2pt);
\draw(-0.75,0) coordinate(L);
\draw[red,thick,-<-] (L) --++(-0.75,0);
\draw[red,thick,-<-={0.3}{}] (L) to[out=60,in=180] (30:1.5);
\draw[red,thick,-<-={0.3}{}] (L) to[out=-60,in=180] (-30:1.5);
\node at (0,-2) {$0$};
{\color{mygreen}
\draw (0,0.3) circle(2pt) node[right=0.2em,scale=0.8]{$0$};
\draw (0,1) circle(2pt) node[right=0.2em,scale=0.8]{$-1$};
\draw (0,-0.3) circle(2pt) node[right=0.2em,scale=0.8]{$0$};
\draw (0,-1) circle(2pt) node[right=0.2em,scale=0.8]{$0$};
\draw (1,0) circle(2pt) node[above=0.2em,scale=0.8]{$0$};
\draw (-1,0.3) circle(2pt) node[above=0.2em,scale=0.8]{$1$};
}
\end{scope}}
{\begin{scope}[xshift=4cm,yshift=-4.5cm]
\draw[blue] (0,0) circle(1.5cm);
\draw[blue] (0,1.5) -- (0,-1.5);
\fill(0,1.5) circle(1.5pt);
\fill(0,-1.5) circle(1.5pt);
\filldraw[fill=white](0,0) circle(2pt);
\draw(-0.75,0) coordinate(L);
\draw[red,thick,-<-] (L) --++(-0.75,0);
\draw[red,thick,-<-={0.3}{}] (L) to[out=-60,in=180] (-30:1.5);
\draw[red,thick,-<-={0.2}{}] (L) ..controls ++(45:0.5) and (0.3,0.3).. (0.3,0)
..controls (0.3,-0.3) and (-0.2,-0.3).. (-0.2,0)
..controls (-0.2,0.2) and (0.2,0.2).. (0.2,0);
\node at (0,-2) {$\varpi^\vee_1$};
{\color{mygreen}
\draw (0,0.5) circle(2pt) node[right=0.2em,scale=0.8]{$0$};
\draw (0,1) circle(2pt) node[right=0.2em,scale=0.8]{$0$};
\draw (0,-0.4) circle(2pt) node[right=0.2em,scale=0.8]{$0$};
\draw (0,-1) circle(2pt) node[right=0.2em,scale=0.8]{$0$};
\draw (1,0) circle(2pt) node[above=0.2em,scale=0.8]{$0$};
\draw (-1,0.3) circle(2pt) node[above=0.2em,scale=0.8]{$1$};
}
\end{scope}}
{\begin{scope}[xshift=8cm,yshift=-4.5cm]
\draw[blue] (0,0) circle(1.5cm);
\draw[blue] (0,1.5) -- (0,-1.5);
\fill(0,1.5) circle(1.5pt);
\fill(0,-1.5) circle(1.5pt);
\filldraw[fill=white](0,0) circle(2pt);
\draw(-0.75,0) coordinate(L);
\draw[red,thick,-<-] (L) --++(-0.75,0);
\draw[red,thick,-<-={0.3}{}] (L) to[out=60,in=180] (30:1.5);
\draw[red,thick,-<-={0.2}{}] (L) ..controls ++(-45:0.5) and (0.3,-0.3).. (0.3,0)
..controls (0.3,0.3) and (-0.2,0.3).. (-0.2,0)
..controls (-0.2,-0.2) and (0.2,-0.2).. (0.2,0);
\node at (0,-2) {$-\varpi^\vee_2$};
{\color{mygreen}
\draw (0,0.4) circle(2pt) node[right=0.2em,scale=0.8]{$0$};
\draw (0,1) circle(2pt) node[right=0.2em,scale=0.8]{$-1$};
\draw (0,-0.5) circle(2pt) node[right=0.2em,scale=0.8]{$-1$};
\draw (0,-1) circle(2pt) node[right=0.2em,scale=0.8]{$0$};
\draw (1,0) circle(2pt) node[above=0.2em,scale=0.8]{$0$};
\draw (-1,0.3) circle(2pt) node[above=0.2em,scale=0.8]{$1$};
}
\end{scope}}
{\begin{scope}[xshift=12cm,yshift=-4.5cm]
\draw[blue] (0,0) circle(1.5cm);
\draw[blue] (0,1.5) -- (0,-1.5);
\fill(0,1.5) circle(1.5pt);
\fill(0,-1.5) circle(1.5pt);
\filldraw[fill=white](0,0) circle(2pt);
\draw(-0.75,0) coordinate(L);
\draw[red,thick,-<-] (L) --++(-0.75,0);
\draw[red,thick,-<-={0.3}{}] (L) ..controls ++(45:0.9)and (0.4,0.4).. (0.4,0);
\draw[red,thick,-<-={0.3}{}] (L) ..controls ++(-45:0.9) and (0.4,-0.4).. (0.4,0);
\draw[red,thick] (0,0) circle(0.3cm);
\draw[red,thick] (0,0) circle(0.2cm);
\draw[red,thick] (0.3,0) -- (0.4,0);
\draw[red,thick] (-20:0.2) -- (-20:0.3);
\draw[red,thick] (-40:0.1) -- (-40:0.2);
\node at (0,-2) {$\varpi^\vee_1-\varpi^\vee_2$};
{\color{mygreen}
\draw (0,0.6) circle(2pt) node[right=0.2em,scale=0.8]{$0$};
\draw (0,1) circle(2pt) node[right=0.2em,scale=0.8]{$0$};
\draw (0,-0.6) circle(2pt) node[right=0.2em,scale=0.8]{$-1$};
\draw (0,-1) circle(2pt) node[right=0.2em,scale=0.8]{$0$};
\draw (1,0) circle(2pt) node[above=0.2em,scale=0.8]{$0$};
\draw (-1,0.3) circle(2pt) node[above=0.2em,scale=0.8]{$1$};
}
\end{scope}}

{\begin{scope}[yshift=-9cm]
\draw[blue] (0,0) circle(1.5cm);
\draw[blue] (0,1.5) -- (0,-1.5);
\fill(0,1.5) circle(1.5pt);
\fill(0,-1.5) circle(1.5pt);
\filldraw[fill=white](0,0) circle(2pt);
\draw(-0.75,0) coordinate(L);
\draw(0.75,0) coordinate(R);
\draw[red,thick,-<-] (L) --++(-0.75,0);
\draw[red,thick,-<-={0.3}{}] (L) arc(180:0:0.75);
\draw[red,thick,-<-={0.3}{}] (L) arc(-180:0:0.75);
\draw[red,thick,->-] (R) --++(0.75,0);
\node at (0,-2) {$0$};
{\color{mygreen}
\draw (0,0.5) circle(2pt) node[right=0.2em,scale=0.8]{$0$};
\draw (0,1) circle(2pt) node[right=0.2em,scale=0.8]{$0$};
\draw (0,-0.5) circle(2pt) node[right=0.2em,scale=0.8]{$0$};
\draw (0,-1) circle(2pt) node[right=0.2em,scale=0.8]{$0$};
\draw (1,0.3) circle(2pt) node[above=0.2em,scale=0.8]{$-1$};
\draw (-1,0.3) circle(2pt) node[above=0.2em,scale=0.8]{$1$};
}
\end{scope}}

{\begin{scope}[xshift=4cm,yshift=-9cm]
\draw[blue] (0,0) circle(1.5cm);
\draw[blue] (0,1.5) -- (0,-1.5);
\fill(0,1.5) circle(1.5pt);
\fill(0,-1.5) circle(1.5pt);
\filldraw[fill=white](0,0) circle(2pt);
\draw(-0.75,0) coordinate(L);
\draw(0.75,0) coordinate(R);
\draw[red,thick,-<-] (L) --++(-0.75,0);
\draw[red,thick,-<-={0.3}{}] (L) arc(-180:0:0.75);
\draw[red,thick,->-] (R) --++(0.75,0);
\draw[red,thick,-<-={0.1}{}] (L) ..controls ++(60:0.5) and (0.3,0.3).. (0.3,0)
..controls (0.3,-0.3) and (-0.2,-0.3).. (-0.2,0)
..controls (-0.2,0.2) and (0.2,0.2).. (0.2,0);
\draw[red,thick,->-={0.2}{}] (R) to[out=120,in=0] (-220:1.5);
\node at (0,-2) {$\varpi^\vee_1$};
{\color{mygreen}
\draw (0,0.5) circle(2pt) node[left=0.2em,scale=0.8]{$0$};
\draw (0,1) circle(2pt) node[right=0.2em,scale=0.8]{$1$};
\draw (0,-0.5) circle(2pt) node[right=0.2em,scale=0.8]{$0$};
\draw (0,-1) circle(2pt) node[right=0.2em,scale=0.8]{$0$};
\draw (1,0.3) circle(2pt) node[above=0.2em,scale=0.8]{$-1$};
\draw (-1,0.3) circle(2pt) node[above=0.2em,scale=0.8]{$1$};
}
\end{scope}}

{\begin{scope}[xshift=8cm,yshift=-9cm]
\draw[blue] (0,0) circle(1.5cm);
\draw[blue] (0,1.5) -- (0,-1.5);
\fill(0,1.5) circle(1.5pt);
\fill(0,-1.5) circle(1.5pt);
\filldraw[fill=white](0,0) circle(2pt);
\draw(-0.75,0) coordinate(L);
\draw(0.75,0) coordinate(R);
\draw[red,thick,-<-] (L) --++(-0.75,0);
\draw[red,thick,-<-={0.3}{}] (L) arc(-180:0:0.75);
\draw[red,thick,->-] (R) --++(0.75,0);
\draw[red,thick,->-={0.1}{}] (R) ..controls ++(-225:0.5) and (-0.3,0.3).. (-0.3,0)
..controls (-0.3,-0.3) and (0.2,-0.3).. (0.2,0)
..controls (0.2,0.2) and (-0.2,0.2).. (-0.2,0);
\draw[red,thick,-<-={0.2}{}] (L) to[out=60,in=180] (40:1.5);
\node at (0,-2) {$-\varpi^\vee_1$};
{\color{mygreen}
\draw (0,0.5) circle(2pt) node[right=0.2em,scale=0.8]{$0$};
\draw (0,1) circle(2pt) node[right=0.2em,scale=0.8]{$-1$};
\draw (0,-0.5) circle(2pt) node[right=0.2em,scale=0.8]{$0$};
\draw (0,-1) circle(2pt) node[right=0.2em,scale=0.8]{$0$};
\draw (1,0.3) circle(2pt) node[above=0.2em,scale=0.8]{$-1$};
\draw (-1,0.3) circle(2pt) node[above=0.2em,scale=0.8]{$1$};
}
\end{scope}}
\end{tikzpicture}
    \caption{Elementary braids on $D^\ast$ and their shear coordinates. Below each picture, shown is the value of the tropicalized Casimir function assigned at the central puncture.}
    \label{fig:Casimir_check}
\end{figure}

\subsection{Equivalence to the cluster exact sequence}\label{subsec:cluster_exact_seq}
Here we compare the lamination exact sequence in \cref{prop:exact_seq_lamination} with the tropicalization of the cluster exact sequence \cite{FG09} (see \cref{sec:appendix}). 
Recall that for each puncture $p \in \bP$, associated is the tropicalized Casimir functions $\theta_p: \cL^x(\Sigma,\bQ) \to \mathsf{P}_\bQ^\vee$. Let $\langle \ ,\ \rangle:\mathsf{Q}_\bQ \otimes \mathsf{P}^\vee_\bQ \to \bQ$ denote the dual pairing, for which we have $\langle \alpha_s,\varpi_t^\vee\rangle =\delta_{st}$. 
Take an ideal triangulation $\tri(p)$ of $\Sigma$ so that the star neighborhood of $p$ is a once-punctured disk with two marked points on its boundary. Label the vertices of the quiver $Q^{\tri(p)}$ as prescribed in  \cref{fig:puncture_quiver_body}. 

\begin{figure}[ht]
\begin{tikzpicture}[>=latex]
\draw[blue] (0,0) circle(2cm);
\draw[blue] (0,2) -- (0,-2);
\node[fill,circle,inner sep=1pt] at (0,2) {};
\node[fill,circle,inner sep=1pt] at (0,-2) {};
\filldraw[fill=white](0,0) circle(2pt);
{\color{mygreen}
\draw(0,0.667) circle(2pt) coordinate(A1) node[above right,scale=0.8]{$1$};
\draw(0,1.333) circle(2pt) coordinate(A2) node[above right,scale=0.8]{$3$};
\draw(0,-0.667) circle(2pt) coordinate(B1) node[above right=0.2em,scale=0.8]{$2$};
\draw(0,-1.333) circle(2pt) coordinate(B2) node[below right,scale=0.8]{$5$};
\draw(1.333,0) circle(2pt) coordinate(C) node[right,scale=0.8]{$4$};
\draw(-1.333,0) circle(2pt) coordinate(D) node[left,scale=0.8]{$6$};
\draw(45:2) circle(2pt) coordinate(E1) node[above right,scale=0.8]{$7$};
\draw(-45:2) circle(2pt) coordinate(E2) node[below right,scale=0.8]{$8$};
\draw(135:2) circle(2pt) coordinate(F1) node[above left,scale=0.8]{$10$};
\draw(-135:2) circle(2pt) coordinate(F2) node[below left,scale=0.8]{$9$};
\qarrow{A2}{C}
\qarrow{C}{B2}
\qarrow{B2}{D}
\qarrow{D}{A2}
\qarrow{B1}{C}
\qarrow{C}{A1}
\qarrow{A1}{D}
\qarrow{D}{B1}
\qarrow{C}{E1}
\qarrow{E1}{A2}
\qarrow{B2}{E2}
\qarrow{E2}{C}
\qarrow{A2}{F1}
\qarrow{F1}{D}
\qarrow{D}{F2}
\qarrow{F2}{B2}
}
\end{tikzpicture}
    \caption{The quiver $Q^{\tri(p)}$ around a puncture $p$. Here possible (half-) arrows between boundary vertices are omitted.}
    \label{fig:puncture_quiver_body}
\end{figure}

\begin{prop}\label{prop:Casimir}
We have
\begin{align*}
    \langle \alpha_1, \sfx_{\tri(p)}^* \theta_p \rangle &= \sfx_3+ \sfx_4 + \sfx_5 + \sfx_6, \\
    \langle \alpha_2, \sfx_{\tri(p)}^* \theta_p \rangle &= \sfx_1+ \sfx_2.
\end{align*}
In particular, the map $\theta_p$ is identified with the tropicalization of the cluster Casimir map $\theta: \X_{\fsl_3,\Sigma} \to H_\X$ (\cref{subsec:cluster_sl3}). 
\end{prop}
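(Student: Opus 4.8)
The plan is to reduce the two claimed coordinate identities to a finite verification on the elementary braids classified in \cref{prop:classification}, exploiting that both sides are additive under the elementary-braid decomposition. First I would observe this additivity. For the right-hand sides it is exactly \cref{lem:decomposition_positive}, giving $\sfx_i^{\tri(p)}(W)=\sum_\alpha \sfx_i^{\tri(p)}(W_\alpha)$ for every $i$. For the left-hand sides, recall that $\theta_p$ is by definition the sum of the local contributions of \cref{fig:weight-contribution} over the ends of $W$ incident to $p$; since the honeycomb splitting that produces the decomposition takes place in the interiors of the triangles, away from $p$, the ends at $p$ are merely distributed among the $W_\alpha$, so $\theta_p(W)=\sum_\alpha \theta_p(W_\alpha)$ and hence $\langle \alpha_s,\theta_p(W)\rangle=\sum_\alpha \langle \alpha_s,\theta_p(W_\alpha)\rangle$. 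Thus it suffices to prove the identities when $W$ is a single elementary braid on $D^\ast$; here all relevant coordinates are supported in $D^\ast$, because both interior edges of $\tri(p)$ are shared by the two triangles of $D^\ast$, so the quadrilateral around either of them is all of $D^\ast$ and $\sfx_1,\dots,\sfx_6$ depend only on $W\cap D^\ast$.

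Next, by \cref{prop:classification} every elementary braid on $D^\ast$ is one of the eleven webs of \cref{fig:Casimir_check}, up to orientation-reversal and the $\pi$-rotation. For each of the eleven I would read off $\theta_p(W_\alpha)=a_1\varpi^\vee_1+a_2\varpi^\vee_2$ from the label beneath the picture and the six interior shear coordinates from the green labels, and check directly, using $\langle \alpha_s,\varpi^\vee_t\rangle=\delta_{st}$, that $a_1=\sfx_3+\sfx_4+\sfx_5+\sfx_6$ and $a_2=\sfx_1+\sfx_2$. For instance, the braid with $\theta_p=\varpi^\vee_1-\varpi^\vee_2$ has $\sfx_6=1$, $\sfx_2=-1$ and all other interior coordinates zero, matching $1=\sfx_3+\sfx_4+\sfx_5+\sfx_6$ and $-1=\sfx_1+\sfx_2$; the remaining ten cases are equally immediate.

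To cover the whole classification it then remains to check that the pair of identities is stable under the two symmetries. The $\pi$-rotation fixes $p$ and induces the permutation $1\leftrightarrow 2$, $3\leftrightarrow 5$, $4\leftrightarrow 6$ of the interior vertices, under which both sums $\sfx_1+\sfx_2$ and $\sfx_3+\sfx_4+\sfx_5+\sfx_6$ are invariant while $\theta_p$ is unchanged, so each identity is preserved. Orientation-reversal is governed by the Dynkin involution: by \eqref{eq:Dynkin-theta} it replaces $\theta_p$ by $\theta_p^\ast$, which exchanges $a_1\leftrightarrow a_2$ and hence swaps the two identities, while on the coordinate side it acts by the tropical cluster transformation $\ast_\tri^x$ of \cref{prop:Dynkin-cluster}; the check is to confirm that $\ast_\tri^x$ carries $\sfx_1+\sfx_2$ to $\sfx_3+\sfx_4+\sfx_5+\sfx_6$ and conversely on the elementary braids. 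Finally, the ``in particular'' statement follows because $\alpha_1,\alpha_2$ span $\mathsf{Q}_\bQ$, so the two pairings just computed determine $\theta_p\in\mathsf{P}^\vee_\bQ=\Hom(\mathsf{Q}_\bQ,\bQ)$ completely; comparing these formulas with the coordinate expression of the tropicalized cluster Casimir map in \cref{subsec:cluster_sl3} identifies the two maps.

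The main obstacle I anticipate is the orientation-reversal reduction: because $\ast_\tri^x$ is a genuine piecewise-linear tropical $\X$-transformation involving the $[\,\cdot\,]_+$ terms attached to the triangle coordinates $\sfx_4,\sfx_6$, rather than a plain permutation, verifying that it interchanges the two linear forms requires a short case analysis on the signs of $\sfx_4$ and $\sfx_6$; the sign-coherence part of \cref{lem:decomposition_positive} is what keeps this analysis finite. Should this prove delicate, the safe alternative is simply to draw the orientation-reversed and $\pi$-rotated representatives explicitly and repeat the direct reading of coordinates, turning the entire argument into a finite inspection of \cref{fig:Casimir_check} and its symmetric copies.
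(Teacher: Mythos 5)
Your proposal is correct and follows essentially the same route as the paper: reduce via the elementary-braid decomposition (\cref{lem:decomposition_positive}, \cref{prop:classification}) to a finite inspection of \cref{fig:Casimir_check}, then handle the opposite orientations either by direct verification or via \eqref{eq:Dynkin-theta} together with \cref{prop:Dynkin-cluster}. The paper's proof is terser (it also notes the initial reduction to integral laminations by $\bQ_{>0}$-equivariance), but the substance is identical.
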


\begin{proof}
By the $\bQ_{>0}$-equivariance, it suffices to show the equations for integral $\fsl_3$-laminations. Let $\hL$ be an integral $\fsl_3$-lamination represented by a non-elliptic signed web $W$, and $\cW$ the associated spiralling diagram in good position with respect to the split triangulation $\widehat{\tri}(p)$. Since both sides of the equations only depend on the restriction of $\cW$ to the once-punctured disk $D^\ast(p)$ containing the puncture $p$ with two special points, we can concentrate on the diagram $\cW_p:=\cW\cap D^\ast(p)$. Then it suffices to consider the elementary braids listed in \cref{fig:Casimir_check}, for which one can easily verify the desired equations. The cases for the opposite orientation can be similarly verified, or one can deduce them by applying \eqref{eq:Dynkin-theta} and \cref{prop:Dynkin-cluster}. 
The assertion is proved.
\end{proof}

\begin{thm}\label{thm:cluster_exact_sequence}
The sequence
\begin{align*}
    0 \to H_\A(\bQ^T) \to \cL^a(\Sigma,\bQ) \xrightarrow{p} \cL^x(\Sigma,\bQ) \xrightarrow{\theta} H_\X(\bQ^T) \to 0
\end{align*}
of PL maps (\cref{prop:exact_seq_lamination}) coincides with the cluster exact seqeunce \eqref{eq:cluster_exact_seq_sl3}. 
\end{thm}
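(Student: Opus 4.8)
The plan is to compare the two four-term exact sequences entry by entry, reducing everything to the identification of the middle map $p$ together with two local computations at the ends, one of which is already available. First I would fix the middle identification. By \cref{thm:shear_coordinate} the shear coordinates give an $MC(\Sigma)$-equivariant isomorphism $\cL^x(\Sigma,\bQ) \cong \X^\uf_{\fsl_3,\Sigma}(\bQ^T)$, and the bounded $\fsl_3$-laminations carry the analogous (Douglas--Sun--Kim type) coordinates identifying $\cL^a(\Sigma,\bQ) \cong \A_{\fsl_3,\Sigma}(\bQ^T)$; under these the geometric ensemble map $p$ of \eqref{eq:ensemble_unfrozen} becomes the tropicalization of the cluster ensemble map, as built into the constructions (cf.\ \cite{IK22}). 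Granting this, the middle square of the two sequences commutes, and since both rows are exact (the lamination row by \cref{prop:exact_seq_lamination}), the flanking terms are forced to agree with $\ker p$ and $\coker p$ respectively. It then remains to check that the two boundary maps are the expected natural ones.

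For the cokernel end this is precisely \cref{prop:Casimir}: it shows that $\theta_p$ is the tropicalization of the cluster Casimir map, and its explicit formulas $\langle \alpha_1, \sfx^*_{\tri(p)}\theta_p\rangle = \sfx_3+\sfx_4+\sfx_5+\sfx_6$ and $\langle \alpha_2, \sfx^*_{\tri(p)}\theta_p\rangle = \sfx_1+\sfx_2$ simultaneously produce the isomorphism $H_\X(\bQ^T) \cong (\mathsf{P}^\vee_\bQ)^{\bP}$ and the commutativity of the right square. Assembling over all punctures identifies $\theta$ with the cluster Casimir quotient $\X^\uf_{\fsl_3,\Sigma}(\bQ^T) \to H_\X(\bQ^T)$.

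For the kernel end I would match the peripheral action with the tropicalized $H_\A$-action. Using the splitting \eqref{eq:splitting} together with the fact that $\mathsf{w}_m$ is the tropicalized Goncharov--Shen potential (\cref{prop:Weyl_action_A}), the peripheral insertion of a coroot $\mu = v_1\alpha_1^\vee + v_2\alpha_2^\vee$ at a marked point $m$ should translate the $\A$-coordinates by the cocharacter of $H_\A$ dual to the potential. By $\bQ_{>0}$-equivariance it suffices to verify this on the generators $\varpi_1^\vee,\varpi_2^\vee$ (a single peripheral loop of each orientation) in a star neighborhood, and then compare the resulting coordinate translation with the cocharacter of $H_\A$ read off from $\ker$ of the exchange matrix of $Q^{\tri(p)}$. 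This yields the isomorphism $H_\A(\bQ^T)\cong(\mathsf{Q}^\vee_\bQ)^{\bM}$ compatibly with the $\mathsf{Q}^\vee$-parametrization and the commutativity of the left square.

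I expect the kernel end to be the main obstacle. Unlike the Casimir map, whose local contributions were already tabulated in \cref{fig:Casimir_check}, matching the peripheral action requires an explicit description of $\ker$ of the $\fsl_3$-exchange matrix and a local computation of how inserting a single peripheral leaf shifts the $\A$-coordinates. The delicate point is the normalization: one must check that the coroot weights $2v_1 - v_2$ and $-v_1 + 2v_2$ --- which encode the Cartan matrix $C(\fsl_3)$ --- are exactly the translations produced by the cluster $H_\A$-action, and that the bookkeeping runs correctly over all of $\bM = \bP \sqcup \bM_\partial$ (including the frozen boundary special points) rather than over the punctures alone.
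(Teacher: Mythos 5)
Your proposal follows essentially the same route as the paper: identify the middle map with the tropicalized cluster ensemble map via \cite[Proposition 4.10]{IK22}, use \cref{prop:Casimir} for the Casimir end, and settle the kernel end by computing the coordinate shift produced by a single peripheral loop of each orientation in a star neighborhood and matching it against the cluster $H_\A$-action \eqref{eq:action_flow_form} determined by $\varpi_1^\vee,\varpi_2^\vee \in K^\vee_\bQ$. The step you flag as the main obstacle is exactly the computation the paper carries out (\cref{fig:peripheral_coordinates}), so the plan is correct as stated.
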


\begin{proof}
By \cref{lem:tropical_torus_X,lem:tropical_torus_A}, the $\bQ$-vector spaces $H_\X(\bQ^T)$ and $H_\A(\bQ^T)$ introduced in \cref{subsec:lam_exact_seq} are the same as the tropicalizations of the tori $H_\X$ and $H_\A$, respectively.
Moreover, the ensemble map $p: \cL^a(\Sigma,\bQ) \to \cL^x(\Sigma,\bQ)$ and the tropicalized Casimir map $\theta:\cL^x(\Sigma,\bQ) \to H_\X(\bQ^T)$ coincide with the cluster ones by \cite[Proposition 4.10]{IK22} and \cref{prop:Casimir}, respectively. It remains to show that the $\mathsf{Q}_\bQ^\vee$-action adding peripheral components around a marked point $m \in \bM$ coincides with the tropicalization of the cluster action of $H_\A$ on $\A_{\fsl_3,\Sigma}$. 

For a puncture $p \in \bP$, take a labeled $\fsl_3$-triangulation $(\tri(p),\ell(p))$ as in \cref{fig:puncture_quiver_body}. Then the peripheral components  around $p$ with weight $1$ have the shear coordinates as shown in \cref{fig:peripheral_coordinates}. In particular, given any $\fsl_3$-lamination $\widetilde{L} \in \cL^a(\Sigma,\bQ)$, the action adding the peripheral component around $p$ with weight $t \in \bQ$ and counter-clockwise orientation shifts the tropical $\A$-coordinates as 
\begin{align*}
    \sfa_i(\widetilde{L}) \mapsto 
    \begin{cases}
    \sfa_i(\widetilde{L}) + \frac{2}{3}t, & \mbox{if $i=3,4,5,6$}, \\
    \sfa_i(\widetilde{L}) + \frac{1}{3}t, & \mbox{if $i=1,2$}, \\
    \sfa_i(\widetilde{L}) & \mbox{otherwise}.
    \end{cases}
\end{align*}
This is exactly the tropicalized version of the cluster action \eqref{eq:action_flow_form} induced by the element $\varpi^\vee_1=\frac{2}{3}\alpha^\vee_1 + \frac{1}{3}\alpha^\vee_2 \in K^\vee_\bQ$. Similarly, the action adding the peripheral component with clockwise orientation coincides with the tropicalized action given by $\varpi^\vee_2=\frac{1}{3}\alpha^\vee_1 + \frac{2}{3}\alpha^\vee_2 \in K^\vee$. Thus the $\mathsf{Q}^\vee_\bQ$-action in \cref{prop:exact_seq_lamination} associated with the puncture $p$ coincides with the cluster action. The coincidence of two actions associated with a special point is similarly verified. 
\end{proof}

\begin{rem}
The action given by $\beta \in H_\A(\bQ^T)$ preserves the integral part $\cL^a(\Sigma,\bZ)$ if and only if it belongs to the coroot lattices. 
\end{rem}

\begin{figure}[htbp]
\begin{tikzpicture}
\draw[blue] (0,0) circle(1.5cm);
\draw[blue] (0,1.5) -- (0,-1.5);
\fill(0,1.5) circle(1.5pt);
\fill(0,-1.5) circle(1.5pt);
\filldraw[fill=white](0,0) circle(2pt);
\draw[red,thick,->-] (0,0) circle(0.9cm);
\node at (0,-2) {$0$};
{\color{mygreen}
\draw (0,0.5) circle(2pt) node[right,scale=0.8]{$1/3$};
\draw (0,1.2) circle(2pt) node[right,scale=0.8]{$2/3$};
\draw (0,-0.5) circle(2pt) node[right,scale=0.8]{$1/3$};
\draw (0,-1.2) circle(2pt) node[right,scale=0.8]{$2/3$};
\draw (1.2,0) circle(2pt) node[above,scale=0.8]{$2/3$};
\draw (-1.2,0) circle(2pt) node[above,scale=0.8]{$2/3$};
}
{\begin{scope}[xshift=4cm]
\draw[blue] (0,0) circle(1.5cm);
\draw[blue] (0,1.5) -- (0,-1.5);
\fill(0,1.5) circle(1.5pt);
\fill(0,-1.5) circle(1.5pt);
\filldraw[fill=white](0,0) circle(2pt);
\draw[red,thick,-<-] (0,0) circle(0.9cm);
\node at (0,-2) {$0$};
{\color{mygreen}
\draw (0,0.5) circle(2pt) node[right,scale=0.8]{$2/3$};
\draw (0,1.2) circle(2pt) node[right,scale=0.8]{$1/3$};
\draw (0,-0.5) circle(2pt) node[right,scale=0.8]{$2/3$};
\draw (0,-1.2) circle(2pt) node[right,scale=0.8]{$1/3$};
\draw (1.2,0) circle(2pt) node[above,scale=0.8]{$1/3$};
\draw (-1.2,0) circle(2pt) node[above,scale=0.8]{$1/3$};
}
\end{scope}}
\end{tikzpicture}
    \caption{The shear coordinates of peripheral components around a puncture $p$.}
    \label{fig:peripheral_coordinates}
\end{figure}
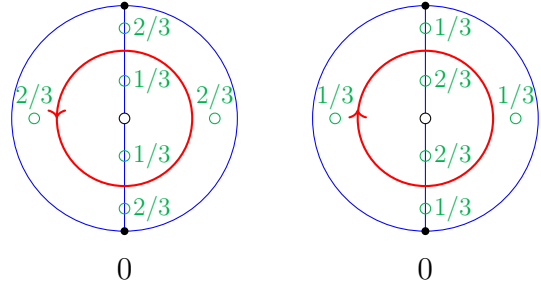

\input{4_Weyl}
\section{Relation to the work of Fraser--Pylyavskyy}\label{sec:FP}
Here we indicate some relations between our formulation of unbounded $\fsl_3$-laminations and the related work of Fraser--Pylyavskyy \cite{FP21}. In what follows, we use the identification $\mathsf{P}\cong\mathsf{P}^\vee$, $\varpi_s = \varpi_s^\vee$ via the normalized Killing form of $\fsl_3$. 


\subsection{Relation with the pseudo-tagged diagrams}
Our integral unbounded $\fsl_3$-laminations are related to \emph{pseudo-tagged diagrams} of Fraser--Pylyavskyy, as follows. Here is the $\fsl_3$-case of \cite[Definition 8.11]{FP21}:

\begin{dfn}
A \emph{pseudo-tagged $\fsl_3$-diagram} is a web $W$ on $\Sigma$ together with a subset $\varphi(e) \subset \{1,2,3\}$ assigned to each end $e$ of $W$ incident to a puncture $p \in \bM_\circ$. Here we impose
\begin{align*}
    |\varphi(e)| = \begin{cases}
    1 & \mbox{if $e$ is outgoing from $p$}, \\
    2 & \mbox{if $e$ is incoming to $p$}.
    \end{cases}
\end{align*}
\end{dfn}
Here is a representation-theoretic meaning of the data $\varphi(e)$. Recall that the weight lattice of $\fsl_3$ is realized as
\begin{align*}
    \mathsf{P} = \bZ^3 / \langle \mathbf{e}_1+\mathbf{e}_2+\mathbf{e}_3 \rangle_\bZ,
\end{align*}
where $(\mathbf{e}_1,\mathbf{e}_2,\mathbf{e}_3)$ denotes the standard basis of $\bZ^3$. Then for a subset $A \subset \{1,2,3\}$, we associate the vector $\iota_A:=\sum_{i \in A} \mathbf{e}_i \in \mathsf{P}$. Recall the fundamental weights $\varpi_1=\mathbf{e}_1$, $\varpi_2=\mathbf{e}_1+\mathbf{e}_2 \in \mathsf{P}$. Then we should have a correspondence so that the weight vector $\iota_{\varphi(e)} \in \mathsf{P}$ agrees with the contribution of $e$ to the tropicalized Casimir function $\theta_p$ (\cref{fig:weight-contribution}) of the end $e$. The correspondence is shown in \cref{fig:sign-tag}.

\begin{figure}[ht]
    \centering
\begin{tikzpicture}[scale=.91]
\begin{scope}
\draw[dashed] (0,0) circle(1cm);
\draw[red,thick,->-] (0,0) -- (0,1);
\node[red,scale=0.9] at (-0.2,0.2) {$+$};
\filldraw[draw=black,fill=white] (0,0) circle(2pt);
\draw[thick,<->] (1.25,0) --node[midway,above]{$\varpi_1$} (2.25,0);
\end{scope}
\begin{scope}[xshift=3.5cm]
\draw[dashed] (0,0) circle(1cm);
\draw[red,thick,->-] (0,0) -- (0,1);
\node[red,scale=0.8] at (-0.3,0.2) {$\{1\}$};
\filldraw[draw=black,fill=white] (0,0) circle(2pt);
\end{scope}
\begin{scope}[xshift=6cm]
\draw[dashed] (0,0) circle(1cm);
\draw[red,thick,-<-={0.7}{}] (0,1) -- (0,0.6);
\draw[red,thick,->-] (0,0.6) arc(90:270:0.3);
\draw[red,thick,->-] (0,0.6) arc(90:-90:0.3);
\node[red,scale=0.9] at (-0.3,0) {$+$};
\node[red,scale=0.9] at (0.3,0) {$-$};
\filldraw[draw=black,fill=white] (0,0) circle(2pt);
\draw[thick,<->] (1.25,0) --node[midway,above=0.2em,scale=0.9]{$\varpi_2-\varpi_1$} (2.25,0);
\end{scope}
\begin{scope}[xshift=9.5cm]
\draw[dashed] (0,0) circle(1cm);
\draw[red,thick,->-] (0,0) -- (0,1);
\node[red,scale=0.8] at (-0.3,0.2) {$\{2\}$};
\filldraw[draw=black,fill=white] (0,0) circle(2pt);
\end{scope}
\begin{scope}[xshift=12cm]
\draw[dashed] (0,0) circle(1cm);
\draw[red,thick,->-] (0,0) -- (0,1);
\node[red,scale=0.9] at (-0.2,0.2) {$-$};
\filldraw[draw=black,fill=white] (0,0) circle(2pt);
\draw[thick,<->] (1.25,0) --node[midway,above]{$-\varpi_2$} (2.25,0);
\end{scope}
\begin{scope}[xshift=15.5cm]
\draw[dashed] (0,0) circle(1cm);
\draw[red,thick,->-] (0,0) -- (0,1);
\node[red,scale=0.8] at (-0.3,0.2) {$\{3\}$};
\filldraw[draw=black,fill=white] (0,0) circle(2pt);
\end{scope}
\begin{scope}[xshift=12cm,yshift=-3cm]
\draw[dashed] (0,0) circle(1cm);
\draw[red,thick,-<-] (0,0) -- (0,1);
\node[red,scale=0.9] at (-0.2,0.2) {$+$};
\filldraw[draw=black,fill=white] (0,0) circle(2pt);
\draw[thick,<->] (1.25,0) --node[midway,above]{$\varpi_2$} (2.25,0);
\end{scope}
\begin{scope}[xshift=15.5cm,yshift=-3cm]
\draw[dashed] (0,0) circle(1cm);
\draw[red,thick,-<-] (0,0) -- (0,1);
\node[red,scale=0.8] at (-0.5,0.2) {$\{1,2\}$};
\filldraw[draw=black,fill=white] (0,0) circle(2pt);
\end{scope}
\begin{scope}[xshift=0cm,yshift=-3cm]
\draw[dashed] (0,0) circle(1cm);
\draw[red,thick,-<-] (0,0) -- (0,1);
\node[red,scale=0.9] at (-0.2,0.2) {$-$};
\filldraw[draw=black,fill=white] (0,0) circle(2pt);
\draw[thick,<->] (1.25,0) --node[midway,above]{$-\varpi_1$} (2.25,0);
\end{scope}
\begin{scope}[xshift=3.5cm,yshift=-3cm]
\draw[dashed] (0,0) circle(1cm);
\draw[red,thick,-<-] (0,0) -- (0,1);
\node[red,scale=0.8] at (-0.5,0.2) {$\{2,3\}$};
\filldraw[draw=black,fill=white] (0,0) circle(2pt);
\end{scope}
\begin{scope}[xshift=6cm,yshift=-3cm]
\draw[dashed] (0,0) circle(1cm);
\draw[red,thick,->-={0.7}{}] (0,1) -- (0,0.6);
\draw[red,thick,-<-] (0,0.6) arc(90:270:0.3);
\draw[red,thick,-<-] (0,0.6) arc(90:-90:0.3);
\node[red,scale=0.9] at (-0.3,0) {$+$};
\node[red,scale=0.9] at (0.3,0) {$-$};
\filldraw[draw=black,fill=white] (0,0) circle(2pt);
\draw[thick,<->] (1.25,0) --node[midway,above=0.2em,scale=0.9]{$\varpi_1-\varpi_2$} (2.25,0);
\end{scope}
\begin{scope}[xshift=9.5cm,yshift=-3cm]
\draw[dashed] (0,0) circle(1cm);
\draw[red,thick,-<-] (0,0) -- (0,1);
\node[red,scale=0.8] at (-0.5,0.2) {$\{1,3\}$};
\filldraw[draw=black,fill=white] (0,0) circle(2pt);
\end{scope}
\end{tikzpicture}
    \caption{Correspondence between the sign (Left) and the tag (Right) of Fraser--Pylyavskyy. The value of the tropicalized Casimir is shown above the arrow.}
    \label{fig:sign-tag}
\end{figure}
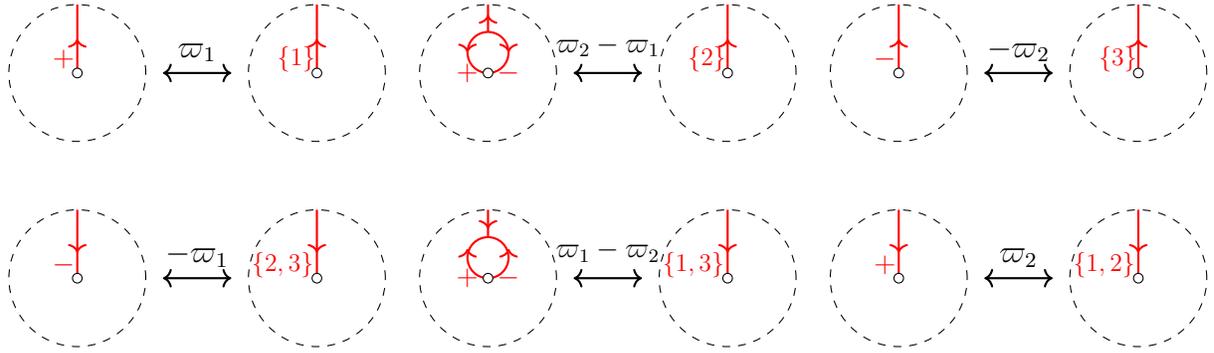

To each pseudo-tagged diagram $(W,\varphi)$, Fraser--Pylyavskyy associated a rational function $[W,\varphi] \in \mathcal{K}(\A_{SL_3,\Sigma})$ on the moduli space $\A_{SL_3,\Sigma}$ of decorated $SL_3$-local systems on $\Sigma$ \cite[Definition 8.12]{FP21}. They describe certain linear relations among these rational functions in their \emph{flattening theorem}, whose concrete form is given in \cite[Lemma 9.4]{FP21}. We are going to discuss a relation between the formulae given there and our resolution move (\cref{def:resolution_move}). We use the notation in \cite[Section 9]{FP21}.

In the $\fsl_3$-case, the non-trivial relations (in which the Grassmannian permutations $w_S$ are non-trivial) are the following:
\begin{itemize}
\item $(a,b,c)=(0,1,1)$: $S=\{2\}$, $w_S=(1\ 2)$, and
\begin{align*}
\mathord{
    \tikz[baseline=-0.6ex,scale=0.9]{
    \draw[dashed] (0,0)  circle(1.2cm);
    \draw[red,thick,->-] (-0.3,0) -- (45:1.2);
    \draw[red,thick,->-] (-0.3,0) -- (-45:1.2);
    \node[red,scale=0.8] at (-0.1,0.6) {$\{2\}$};
    \node[red,scale=0.8] at (-0.1,-0.6) {$\{1\}$};
    \filldraw[fill=white] (-0.3,0) circle(2pt);}
}\ 
= 
-\ \mathord{
    \tikz[baseline=-0.6ex,scale=0.9]{
    \draw[dashed] (0,0)  circle(1.2cm);
    \draw[red,thick,-<-] (-0.3,0) -- (0.3,0);
    \draw[red,thick,->-] (0.3,0) -- (45:1.2);
    \draw[red,thick,->-] (0.3,0) -- (-45:1.2);
    \node[red,scale=0.8] at (-0.2,0.3) {$\{1,2\}$};
    \filldraw[fill=white] (-0.3,0) circle(2pt);}
}
\ + \ 
\mathord{
    \tikz[baseline=-0.6ex,scale=0.9]{
    \draw[dashed] (0,0)  circle(1.2cm);
    \draw[red,thick,-<-] (-0.3,0) -- (0.2,0);
    \draw[red,thick,->-] (0.2,0) arc(0:180:0.5) arc(180:360:0.7) coordinate(A);
    \draw[red,thick,->-] (A) ..controls++(0,0.5) and (45:0.7).. (45:1.2);
    \draw[red,thick,->-={0.8}{}] (0.2,0) -- (-45:1.2);
    \node[red,scale=0.8] at (-0.2,-0.3) {$\{1,2\}$};
    \filldraw[fill=white] (-0.3,0) circle(2pt);}
}
\end{align*}

\item $(a,b,c)=(1,1,1)$: $S=\{1,3\}$, $w_S=(2\ 3)$, and
\begin{align*}
\mathord{
    \tikz[baseline=-0.6ex,scale=0.9]{
    \draw[dashed] (0,0)  circle(1.2cm);
    \draw[red,thick,-<-] (-0.3,0) -- (45:1.2);
    \draw[red,thick,-<-] (-0.3,0) -- (-45:1.2);
    \node[red,scale=0.8] at (-0.2,0.6) {$\{1,2\}$};
    \node[red,scale=0.8] at (-0.2,-0.6) {$\{1,3\}$};
    \filldraw[fill=white] (-0.3,0) circle(2pt);}
}\ 
= 
-\ \mathord{
    \tikz[baseline=-0.6ex,scale=0.9]{
    \draw[dashed] (0,0)  circle(1.2cm);
    \draw[red,thick,->-] (-0.3,0) -- (0.3,0);
    \draw[red,thick,-<-] (0.3,0) -- (45:1.2);
    \draw[red,thick,-<-] (0.3,0) -- (-45:1.2);
    \node[red,scale=0.8] at (-0.2,0.35) {$\{1\}$};
    \filldraw[fill=white] (-0.3,0) circle(2pt);}
}
\ +\  
\mathord{
    \tikz[baseline=-0.6ex,scale=0.9]{
    \draw[dashed] (0,0)  circle(1.2cm);
    \draw[red,thick,->-] (-0.3,0) -- (0.2,0);
    \draw[red,thick,-<-] (0.2,0) arc(0:180:0.5) arc(180:360:0.7) coordinate(A);
    \draw[red,thick,-<-] (A) ..controls++(0,0.5) and (45:0.7).. (45:1.2);
    \draw[red,thick,-<-={0.8}{}] (0.2,0) -- (-45:1.2);
    \node[red,scale=0.8] at (-0.2,-0.35) {$\{1\}$};
    \filldraw[fill=white] (-0.3,0) circle(2pt);}
}
\end{align*}
\item $(a,b,c)=(0,2,1)$: $S=\{3\}$, $w_S=(1\ 3)$, and

\begin{align*}
\mathord{
    \tikz[baseline=-0.6ex,scale=0.9]{
    \draw[dashed] (0,0)  circle(1.2cm);
    \draw[red,thick,-<-] (-0.3,0) -- (45:1.2);
    \draw[red,thick,->-] (-0.3,0) -- (-45:1.2);
    \node[red,scale=0.8] at (-0.2,0.6) {$\{1,2\}$};
    \node[red,scale=0.8] at (-0.2,-0.6) {$\{3\}$};
    \filldraw[fill=white] (-0.3,0) circle(2pt);}
}\ 
&= 
\ \mathord{
    \tikz[baseline=-0.6ex,scale=0.9]{
    \draw[dashed] (0,0)  circle(1.2cm);
		\draw[red,thick,->-] (45:1.2) ..controls (0,0.3) and (-0,-0.3).. (-45:1.2);
    \filldraw[fill=white] (-0.3,0) circle(2pt);}
}
\ - 2\ 
\mathord{
    \tikz[baseline=-0.6ex,scale=0.9]{
    \draw[dashed] (0,0)  circle(1.2cm);
    \draw[red,thick,-<-={0.1}{}] (-45:1.2) ..controls (0.2,-0.5) and (0.1,-0.1).. (0.1,0) arc(0:180:0.4) arc(180:360:0.5) coordinate(A);
    \draw[red,thick,-<-] (A) ..controls++(0,0.5) and (45:0.7).. (45:1.2);
    \filldraw[fill=white] (-0.3,0) circle(2pt);}
}\ 
+\ 
\mathord{
    \tikz[baseline=-0.6ex,scale=0.9]{
    \draw[dashed] (0,0)  circle(1.2cm);
    \draw[red,thick,-<-={0.05}{}] (-45:1.2) ..controls (0.2,-0.5) and (0.1,-0.1).. (0.1,0) arc(0:180:0.4) arc(180:360:0.5) arc(0:180:0.6) arc(180:360:0.7) coordinate(A);
    \draw[red,thick,-<-] (A) ..controls++(0,0.5) and (45:0.7).. (45:1.2);
    \filldraw[fill=white] (-0.3,0) circle(2pt);}
}
\\
&=\ 
\mathord{
    \tikz[baseline=-0.6ex,scale=0.9]{
    \draw[dashed] (0,0)  circle(1.2cm);
		\draw[red,thick,->-] (45:1.2) ..controls (0,0.3) and (-0,-0.3).. (-45:1.2);
    \filldraw[fill=white] (-0.3,0) circle(2pt);}
}\ 
+\ 
\mathord{
    \tikz[baseline=-0.6ex,scale=0.9]{
    \draw[dashed] (0,0)  circle(1.2cm);
		\draw[red,thick,->-={0.13}{},->-={0.87}{}] (45:1.2) ..controls (-1.2,0.3) and (-1.2,-0.3)..node[pos=0.15,inner sep=0](A){} node[pos=0.85,inner sep=0](B){} (-45:1.2);
	\draw[red,thick,-<-] (A) -- (B);
    \filldraw[fill=white] (-0.3,0) circle(2pt);}
}\ 
+\
\mathord{
    \tikz[baseline=-0.6ex,scale=0.9]{
    \draw[dashed] (0,0)  circle(1.2cm);
		\draw[red,thick,->-] (45:1.2) ..controls (-1.2,0.3) and (-1.2,-0.3).. (-45:1.2);
    \filldraw[fill=white] (-0.3,0) circle(2pt);}
}\ .
\end{align*}
Here the right-hand side is computed using the $\fsl_3$-skein relation. 
\item $(a,b,c)=(0,1,2)$: $S=\{2,3\}$, $w_S=(1\ 2)(2\ 3)$, and
\begin{align*}
\mathord{
    \tikz[baseline=-0.6ex,scale=0.9]{
    \draw[dashed] (0,0)  circle(1.2cm);
    \draw[red,thick,->-] (-0.3,0) -- (45:1.2);
    \draw[red,thick,-<-] (-0.3,0) -- (-45:1.2);
    \node[red,scale=0.8] at (-0.2,0.6) {$\{1\}$};
    \node[red,scale=0.8] at (-0.2,-0.6) {$\{2,3\}$};
    \filldraw[fill=white] (-0.3,0) circle(2pt);}
}\ 
&= 
\ \mathord{
    \tikz[baseline=-0.6ex,scale=0.9]{
    \draw[dashed] (0,0)  circle(1.2cm);
		\draw[red,thick,-<-] (45:1.2) ..controls (0,0.3) and (-0,-0.3).. (-45:1.2);
    \filldraw[fill=white] (-0.3,0) circle(2pt);}
}
\ - 2\ 
\mathord{
    \tikz[baseline=-0.6ex,scale=0.9]{
    \draw[dashed] (0,0)  circle(1.2cm);
    \draw[red,thick,->-={0.1}{}] (-45:1.2) ..controls (0.2,-0.5) and (0.1,-0.1).. (0.1,0) arc(0:180:0.4) arc(180:360:0.5) coordinate(A);
    \draw[red,thick,->-] (A) ..controls++(0,0.5) and (45:0.7).. (45:1.2);
    \filldraw[fill=white] (-0.3,0) circle(2pt);}
}\ 
+\ 
\mathord{
    \tikz[baseline=-0.6ex,scale=0.9]{
    \draw[dashed] (0,0)  circle(1.2cm);
    \draw[red,thick,->-={0.05}{}] (-45:1.2) ..controls (0.2,-0.5) and (0.1,-0.1).. (0.1,0) arc(0:180:0.4) arc(180:360:0.5) arc(0:180:0.6) arc(180:360:0.7) coordinate(A);
    \draw[red,thick,->-] (A) ..controls++(0,0.5) and (45:0.7).. (45:1.2);
    \filldraw[fill=white] (-0.3,0) circle(2pt);}
}
\\
&=\ 
\mathord{
    \tikz[baseline=-0.6ex,scale=0.9]{
    \draw[dashed] (0,0)  circle(1.2cm);
		\draw[red,thick,-<-] (45:1.2) ..controls (0,0.3) and (-0,-0.3).. (-45:1.2);
    \filldraw[fill=white] (-0.3,0) circle(2pt);}
}\ 
+\ 
\mathord{
    \tikz[baseline=-0.6ex,scale=0.9]{
    \draw[dashed] (0,0)  circle(1.2cm);
		\draw[red,thick,-<-={0.13}{},-<-={0.87}{}] (45:1.2) ..controls (-1.2,0.3) and (-1.2,-0.3)..node[pos=0.15,inner sep=0](A){} node[pos=0.85,inner sep=0](B){} (-45:1.2);
	\draw[red,thick,->-] (A) -- (B);
    \filldraw[fill=white] (-0.3,0) circle(2pt);}
}\ 
+\
\mathord{
    \tikz[baseline=-0.6ex,scale=0.9]{
    \draw[dashed] (0,0)  circle(1.2cm);
		\draw[red,thick,-<-] (45:1.2) ..controls (-1.2,0.3) and (-1.2,-0.3).. (-45:1.2);
    \filldraw[fill=white] (-0.3,0) circle(2pt);}
}\ .
\end{align*}
Here the computation is similar to the previous case.
\end{itemize}
By connecting $\tikz[baseline=-0.6ex,scale=0.5]{
    \draw[dashed] (0,0)  circle(1.2cm);
		\draw[red,thick,->-] (45:1.2) ..controls (45:1.5) and (1.5,0.5).. (1.5,0) ..controls (1.5,-0.75) and (0.75,-1.5).. (0,-1.5) ..controls (-0.5,-1.5) and (-135:1.5).. (-135:1.2);
    \filldraw[fill=white] (0,0) circle(2pt);}$ 
to the both sides in the first two cases and appropriately rotating the latter two cases, we get the following relations:
\begin{align}
\mathord{
    \tikz[baseline=-0.6ex,scale=0.9]{
    \draw[dashed] (0,0)  circle(1.2cm);
    \draw[red,thick,->-] (-0,0) -- (180:1.2);
    \draw[red,thick,->-] (-0,0) -- (0:1.2);
    \node[red,scale=0.8] at (-0.5,0.4) {$\{2\}$};
    \node[red,scale=0.8] at (0.5,0.4) {$\{1\}$};
    \filldraw[fill=white] (-0,0) circle(2pt);}
}\ 
&= 
\ \mathord{
    \tikz[baseline=-0.6ex,scale=0.9]{
    \draw[dashed] (0,0)  circle(1.2cm);
    \draw[red,thick,-<-] (-0,0) -- (0,-0.5);
    \draw[red,thick,->-] (0,-0.5) to[out=0,in=180] (0:1.2);
    \draw[red,thick,->-] (0,-0.5) to[out=180,in=0] (180:1.2);
    \node[red,scale=0.8] at (-0.4,0.2) {$\{1,2\}$};
    \filldraw[fill=white] (-0,0) circle(2pt);}
}
\ +\  
\mathord{
    \tikz[baseline=-0.6ex,scale=0.9]{
    \draw[dashed] (0,0)  circle(1.2cm);
    \draw[red,thick,-<-] (-0,0) -- (0,0.5);
    \draw[red,thick,->-] (0,0.5) to[out=0,in=180] (0:1.2);
    \draw[red,thick,->-] (0,0.5) to[out=180,in=0] (180:1.2);
    \node[red,scale=0.8] at (-0.4,-0.2) {$\{1,2\}$};
    \filldraw[fill=white] (-0,0) circle(2pt);}
} \label{eq:RY_1}\\
\mathord{
    \tikz[baseline=-0.6ex,scale=0.9]{
    \draw[dashed] (0,0)  circle(1.2cm);
    \draw[red,thick,-<-] (-0,0) -- (180:1.2);
    \draw[red,thick,-<-] (-0,0) -- (0:1.2);
    \node[red,scale=0.8] at (-0.5,0.4) {$\{1,2\}$};
    \node[red,scale=0.8] at (0.5,0.4) {$\{1,3\}$};
    \filldraw[fill=white] (-0,0) circle(2pt);}
}\ 
&= 
\ \mathord{
    \tikz[baseline=-0.6ex,scale=0.9]{
    \draw[dashed] (0,0)  circle(1.2cm);
    \draw[red,thick,->-] (-0,0) -- (0,-0.5);
    \draw[red,thick,-<-] (0,-0.5) to[out=0,in=180] (0:1.2);
    \draw[red,thick,-<-] (0,-0.5) to[out=180,in=0] (180:1.2);
    \node[red,scale=0.8] at (-0.4,0.2) {$\{1\}$};
    \filldraw[fill=white] (-0,0) circle(2pt);}
}
\ +\  
\mathord{
    \tikz[baseline=-0.6ex,scale=0.9]{
    \draw[dashed] (0,0)  circle(1.2cm);
    \draw[red,thick,->-] (-0,0) -- (0,0.5);
    \draw[red,thick,-<-] (0,0.5) to[out=0,in=180] (0:1.2);
    \draw[red,thick,-<-] (0,0.5) to[out=180,in=0] (180:1.2);
    \node[red,scale=0.8] at (-0.4,-0.2) {$\{1\}$};
    \filldraw[fill=white] (-0,0) circle(2pt);}
} \label{eq:RY_2}\\
\mathord{
    \tikz[baseline=-0.6ex,scale=0.9]{
    \draw[dashed] (0,0)  circle(1.2cm);
    \draw[red,thick,-<-] (-0,0) -- (180:1.2);
    \draw[red,thick,->-] (-0,0) -- (0:1.2);
    \node[red,scale=0.8] at (-0.5,0.4) {$\{1,2\}$};
    \node[red,scale=0.8] at (0.5,0.4) {$\{3\}$};
    \filldraw[fill=white] (-0,0) circle(2pt);}
}\ 
&= 
\ 
\mathord{
    \tikz[baseline=-0.6ex,scale=0.9]{
    \draw[dashed] (0,0)  circle(1.2cm);
    \draw[red,thick,->-={0.1}{}] (0,0.5) to[out=0,in=180] (0:1.2);
    \draw[red,thick] (0,0.5) to[out=180,in=0] (180:1.2);
    \filldraw[fill=white] (-0,0) circle(2pt);}
}
\ +\  
\mathord{
    \tikz[baseline=-0.6ex,scale=0.9]{
    \draw[dashed] (0,0)  circle(1.2cm);
    \draw[red,thick,->-] (0.5,0) -- (0:1.2);
    \draw[red,thick,-<-] (-0.5,0) -- (180:1.2);
    \draw[red,thick] (0,0) circle(0.5cm);
    \filldraw[fill=white] (-0,0) circle(2pt);}
		}\ 
+\  
\mathord{
    \tikz[baseline=-0.6ex,scale=0.9]{
    \draw[dashed] (0,0)  circle(1.2cm);
    \draw[red,thick,->-={0.1}{}] (0,-0.5) to[out=0,in=180] (0:1.2);
    \draw[red,thick] (0,-0.5) to[out=180,in=0] (180:1.2);
    \filldraw[fill=white] (-0,0) circle(2pt);}
}\label{eq:RY_3}\\
\mathord{
    \tikz[baseline=-0.6ex,scale=0.9]{
    \draw[dashed] (0,0)  circle(1.2cm);
    \draw[red,thick,->-] (-0,0) -- (180:1.2);
    \draw[red,thick,-<-] (-0,0) -- (0:1.2);
    \node[red,scale=0.8] at (-0.5,0.4) {$\{1\}$};
    \node[red,scale=0.8] at (0.5,0.4) {$\{2,3\}$};
    \filldraw[fill=white] (-0,0) circle(2pt);}
}\ 
&= 
\ 
\mathord{
    \tikz[baseline=-0.6ex,scale=0.9]{
    \draw[dashed] (0,0)  circle(1.2cm);
    \draw[red,thick,-<-={0.1}{}] (0,0.5) to[out=0,in=180] (0:1.2);
    \draw[red,thick] (0,0.5) to[out=180,in=0] (180:1.2);
    \filldraw[fill=white] (-0,0) circle(2pt);}
}
\ +\  
\mathord{
    \tikz[baseline=-0.6ex,scale=0.9]{
    \draw[dashed] (0,0)  circle(1.2cm);
    \draw[red,thick,-<-] (0.5,0) -- (0:1.2);
    \draw[red,thick,->-] (-0.5,0) -- (180:1.2);
    \draw[red,thick] (0,0) circle(0.5cm);
    \filldraw[fill=white] (-0,0) circle(2pt);}
		}\ 
+\  
\mathord{
    \tikz[baseline=-0.6ex,scale=0.9]{
    \draw[dashed] (0,0)  circle(1.2cm);
    \draw[red,thick,-<-={0.1}{}] (0,-0.5) to[out=0,in=180] (0:1.2);
    \draw[red,thick] (0,-0.5) to[out=180,in=0] (180:1.2);
    \filldraw[fill=white] (-0,0) circle(2pt);}
}\label{eq:RY_4}
\end{align}
We remark here that the right-hand sides of \eqref{eq:RY_1}--\eqref{eq:RY_4} coincide with (the classical limit of) the higher-rank analogue of the 
\emph{Roger--Yang skein relations} \cite{RY} found by Shen--Sun--Weng and systematically studied in their upcoming work \cite{SSW_RY}.\footnote{T. I. first learned their $\fsl_3$-version of Roger--Yang relations corresponding to \eqref{eq:RY_3}, \eqref{eq:RY_4} from Zhe Sun. Those corresponding to \eqref{eq:RY_1}, \eqref{eq:RY_2} have been found independently by T.I. and Wataru Yuasa, and Shen--Sun--Weng.}  
From the correspondence given in \cref{fig:sign-tag}, the left-hand sides of \eqref{eq:RY_1}--\eqref{eq:RY_4} correspond to
\begin{align*}
\mathord{
    \tikz[baseline=-0.6ex,scale=0.9]{
    \draw[dashed] (0,0)  circle(1.2cm);
    \draw[red,thick,->-] (-0.5,0) -- (180:1.2);
    \draw[red,thick,-<-={0.4}{},->-={0.7}{}] (-0.25,0) circle(0.25cm);
    \draw[red,thick,->-] (-0,0) -- (0:1.2);
    \node[red,scale=0.8] at (-0.2,0.4) {$+$};
    \node[red,scale=0.8] at (-0.2,-0.4) {$-$};
    \node[red,scale=0.8] at (0.3,0.2) {$+$};
    \filldraw[fill=white] (-0,0) circle(2pt);}
}\ ,\quad \mathord{
    \tikz[baseline=-0.6ex,scale=0.9]{
    \draw[dashed] (0,0)  circle(1.2cm);
    \draw[red,thick,-<-] (-0.5,0) -- (180:1.2);
    \draw[red,thick,->-={0.4}{},-<-={0.7}{}] (-0.25,0) circle(0.25cm);
    \draw[red,thick,-<-] (-0,0) -- (0:1.2);
    \node[red,scale=0.8] at (-0.2,0.4) {$+$};
    \node[red,scale=0.8] at (-0.2,-0.4) {$-$};
    \node[red,scale=0.8] at (0.3,0.2) {$+$};
    \filldraw[fill=white] (-0,0) circle(2pt);}
}\ ,\quad \mathord{
    \tikz[baseline=-0.6ex,scale=0.9]{
    \draw[dashed] (0,0)  circle(1.2cm);
    \draw[red,thick,-<-] (-0,0) -- (180:1.2);
    \draw[red,thick,->-] (-0,0) -- (0:1.2);
    \node[red,scale=0.8] at (-0.3,0.2) {$+$};
    \node[red,scale=0.8] at (0.3,0.2) {$-$};
    \filldraw[fill=white] (-0,0) circle(2pt);}
}\ ,\quad \mathord{
    \tikz[baseline=-0.6ex,scale=0.9]{
    \draw[dashed] (0,0)  circle(1.2cm);
    \draw[red,thick,->-] (-0,0) -- (180:1.2);
    \draw[red,thick,-<-] (-0,0) -- (0:1.2);
    \node[red,scale=0.8] at (-0.3,0.2) {$+$};
    \node[red,scale=0.8] at (0.3,0.2) {$-$};
    \filldraw[fill=white] (-0,0) circle(2pt);}
}\ ,
\end{align*}
respectively. Then we see that after applying the resolution move (\cref{def:resolution_move}) to them, they give rise to one of the terms in the right-hand sides. Therefore our approach seems to be consistent with the Fraser--Pylyavskyy's work, under the belief that our $\fsl_3$-laminations should parametrize the ``highest (or lowest) terms'' of web functions on $\A_{SL_3,\Sigma}$ in some sense. The justification of this point will require further research.

\begin{rem}
While we have the basic correspondence between our signed webs and the pseudo-tagged diagrams of Fraser--Pylyavskyy, we still need care to obtain a well-defined map between them. 
For example, the signed web
\begin{align*}
\tikz[baseline=-0.6ex,scale=0.9]{
    \draw[dashed] (0,0)  circle(1.2cm);
    \draw[red,thick,->-] (0.5,0) -- (0:1.2);
    \draw[red,thick,->-={0.3}{},-<-={0.8}{}] (0.25,0) circle(0.25cm);
    \draw[red,thick,->-] (-0,0) -- (180:1.2);
    \node[red,scale=0.8] at (0.15,0.5) {$+$};
    \node[red,scale=0.8] at (0.15,-0.5) {$-$};
    \node[red,scale=0.8] at (-0.3,0.2) {$+$};
    \filldraw[fill=white] (-0,0) circle(2pt);},
\end{align*}
may be viewed as either of the pseudo-tagged diagrams
\begin{align*}
     \tikz[baseline=-0.6ex,scale=0.9]{
    \draw[dashed] (0,0)  circle(1.2cm);
    \draw[red,thick,->-] (-0,0) -- (180:1.2);
    \draw[red,thick,->-] (-0,0) -- (0:1.2);
    \node[red,scale=0.8] at (-0.5,0.4) {$\{1\}$};
    \node[red,scale=0.8] at (0.5,0.4) {$\{2\}$};
    \filldraw[fill=white] (-0,0) circle(2pt);} \quad \mbox{or} \quad 
    \tikz[baseline=-0.6ex,scale=0.9]{
    \draw[dashed] (0,0)  circle(1.2cm);
    \draw[red,thick,->-] (0.5,0) -- (0:1.2);
    \draw[red,thick,->-={0.3}{},-<-={0.8}{}] (0.25,0) circle(0.25cm);
    \draw[red,thick,->-] (-0,0) -- (180:1.2);
    \node[red,scale=0.8] at (0.2,0.6) {$\{1,2\}$};
    \node[red,scale=0.8] at (0.2,-0.6) {$\{2,3\}$};
    \node[red,scale=0.8] at (-0.5,0.4) {$\{1\}$};
    \filldraw[fill=white] (-0,0) circle(2pt);},
\end{align*}
according to the rule shown in \cref{fig:sign-tag}. 
It seems that the first one is appropriate. Indeed, one can show that the second one is zero using the relation \eqref{eq:RY_4} and the usual $\fsl_3$-skein relations. 
See \cite[Remark 8.16]{FP21} for a seemingly related remark. We do not pursue this point here.
\end{rem}

\subsection{Relation with the dosps}
In \cite[Sections 5--7]{FP21}, they formulate a higher rank analogue of \emph{tagged triangulations} used in the $\fsl_2$-case (\cite{FST}) to describe certain seeds obtained by mutating those associated with the usual ideal triangulations of $\Sigma$. Their idea is as follows.

They firstly consider the weights of cluster variables with respect to the $H$-action on $\A_{SL_3,\Sigma}$ associated with each puncture $p$. They formulated a certain compatibility condition on a collection of $H$-weights, and conjectured that the collection of $H$-weights of any cluster should be compatible \cite[Conjecture 5.4]{FP21}. Such a compatible collection of $H$-weights gives rise to a \emph{decorated ordered set partition} (abbreviated as ``dosp") \cite[Definition 6.6]{FP21}, which is a choice of a Weyl region (\emph{i.e.}, a facet in the Coxeter complex) together with some decoration by signs. The dosps give a higher rank analogue of tags. Then \cite[Theorem 7.3]{FP21} describes the projection from the ``good part" of the exchange graph to the graph of dosps (whose edges are given by \emph{dosp mutations}) at each puncture. Summarizing, each cluster in $\A_{SL_3,\Sigma}$ is (conjecturally) colored by dosps at punctures, and the seed mutations induce dosp mutations. 

Now we are going to relate our construction to this theory. Extending the conjectures by Fomin--Pylyavskyy \cite{FP14,FP16}, Fraser--Pylyavskyy give conjectures \cite[Section 8.5]{FP21} that each cluster is represented by a certain collection of pseudo-tagged diagrams. We then expect that such a collection corresponds to a collection of signed webs. 

\begin{dfn}
The \emph{lamination cluster} associated to $v \in \bExch_{\fsl_3,\Sigma}$ is the collection $\{W_j\}_{j \in I_\uf}$ of signed webs on $\Sigma$ satisfying $\sfx^{(v)}_i(W_j)=\delta_{ij}$ for $i,j \in I_\uf$.
\end{dfn}
In particular, each lamination cluster $\{W_j\}_{j \in I_\uf}$ gives rise to a \emph{$P$-cluster} $\{\theta_p(W_j)^\ast\}_{j \in I_\uf} \subset \mathsf{P}$ in the sense of \cite{FP21}, which determines a dosp. 

Let us consider the punctured disk $D^\ast$ with two special points. Recall the classification of spiralling diagrams in \cref{prop:classification}. 
In this case, their conjectures are shown to be true \cite[Proposition 10.3]{FP21}. 
Following the proof of \cite[Theorem 7.3]{FP21}, we first move on to the cluster that gives the initial seed in the corresponding \emph{Grassmannian moduli space} $\A'_{SL_3,D^\ast}$ \cite[Definition 3.3]{FP21} (cluster type $D_4$), up to the deletion of cluster variables with zero weight. 
The mutation sequence starting from our initial cluster is shown in \cref{fig:transf_Grassmannian}. 

\begin{figure}[ht]
    \centering
\begin{tikzpicture}[>=latex,scale=0.9]
\draw[blue] (0,0) circle(2cm);
\draw[blue] (0,2) -- (0,-2);
\node[fill,circle,inner sep=1pt] at (0,2) {};
\node[fill,circle,inner sep=1pt] at (0,-2) {};
\filldraw[fill=white](0,0) circle(2pt);
{\color{mygreen}
\draw(0,0.667) circle(2pt) coordinate(A1) node[above right,scale=0.8]{$1$};
\draw(0,1.333) circle(2pt) coordinate(A2) node[above right,scale=0.8]{$3$};
\draw(0,-0.667) circle(2pt) coordinate(B1) node[above right=0.2em,scale=0.8]{$2$};
\draw(0,-1.333) circle(2pt) coordinate(B2) node[below right,scale=0.8]{$5$};
\draw(1.333,0) circle(2pt) coordinate(C) node[right,scale=0.8]{$4$};
\draw(-1.333,0) circle(2pt) coordinate(D) node[left,scale=0.8]{$6$};
\qarrow{A2}{C}
\qarrow{C}{B2}
\qarrow{B2}{D}
\qarrow{D}{A2}
\qarrow{B1}{C}
\qarrow{C}{A1}
\qarrow{A1}{D}
\qarrow{D}{B1}
}
\draw[->,thick] (2.5,0) --node[midway,above]{$\mu_5$} (3.5,0);
\begin{scope}[xshift=6cm]
\draw[blue] (0,0) circle(2cm);
\draw[blue] (0,2) -- (0,-2);
\node[fill,circle,inner sep=1pt] at (0,2) {};
\node[fill,circle,inner sep=1pt] at (0,-2) {};
\filldraw[fill=white](0,0) circle(2pt);
{\color{mygreen}
\draw(0,0.667) circle(2pt) coordinate(A1) node[above right,scale=0.8]{$1$};
\draw(0,1.333) circle(2pt) coordinate(A2) node[above right,scale=0.8]{$3$};
\draw(0,-0.667) circle(2pt) coordinate(B1) node[above right=0.2em,scale=0.8]{$2$};
\draw(0,-1.333) circle(2pt) coordinate(B2) node[below right,scale=0.8]{$5$};
\draw(1.333,0) circle(2pt) coordinate(C) node[right,scale=0.8]{$4$};
\draw(-1.333,0) circle(2pt) coordinate(D) node[left,scale=0.8]{$6$};
\qarrow{A2}{C}
\qarrow{B2}{C}
\qarrow{D}{B2}
\qarrow{D}{A2}
\qarrow{C}{D}
\qarrow{B1}{C}
\qarrow{C}{A1}
\qarrow{A1}{D}
\qarrow{D}{B1}
}
\draw[->,thick] (2.5,0) --node[midway,above]{$\mu_3$} (3.5,0);
\end{scope}
\begin{scope}[xshift=12cm]
\draw[blue] (0,0) circle(2cm);
\draw[blue] (0,2) -- (0,-2);
\node[fill,circle,inner sep=1pt] at (0,2) {};
\node[fill,circle,inner sep=1pt] at (0,-2) {};
\filldraw[fill=white](0,0) circle(2pt);
{\color{mygreen}
\draw(0,0.667) circle(2pt) coordinate(A1) node[above right,scale=0.8]{$1$};
\draw(0,1.333) circle(2pt) coordinate(A2) node[above right,scale=0.8]{$3$};
\draw(0,-0.667) circle(2pt) coordinate(B1) node[above right=0.2em,scale=0.8]{$2$};
\draw(0,-1.333) circle(2pt) coordinate(B2) node[below right,scale=0.8]{$5$};
\draw(1.333,0) circle(2pt) coordinate(C) node[right,scale=0.8]{$4$};
\draw(-1.333,0) circle(2pt) coordinate(D) node[left,scale=0.8]{$6$};
\qarrow{C}{A2}
\qarrow{B2}{C}
\qarrow{D}{B2}
\qarrow{A2}{D}
\qarrow{B1}{C}
\qarrow{C}{A1}
\qarrow{A1}{D}
\qarrow{D}{B1}
}
\end{scope}
\end{tikzpicture}
    \caption{Transformation from the initial seed in $\A_{SL_3,D^\ast}$ to the initial seed in the Grassmannian moduli space $\A'_{SL_3,D^\ast}$. }
    \label{fig:transf_Grassmannian}
\end{figure}
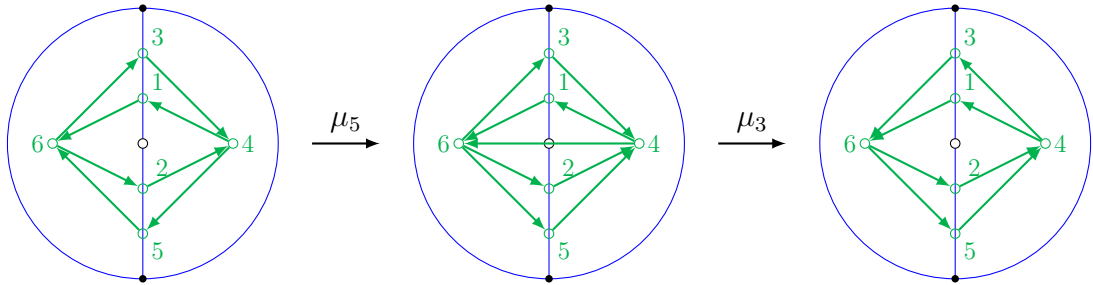

\begin{figure}[ht]
    \centering
\begin{tikzpicture}
\node[scale=0.7] at (0,1.2) {
$\tikz{\draw(0,0) circle(1cm);\foreach \i in {80,260} \fill(\i:1) circle(1.5pt);\draw[red,thick,->-](0,1)--(0,0) node[above left]{$+$};\filldraw[fill=white](0,0) circle(2pt);}$
};
\node[scale=0.7] at (0,3) {
$\tikz{\draw(0,0) circle(1cm);\foreach \i in {80,260} \fill(\i:1) circle(1.5pt);\draw[red,thick,-<-](0,1)--(0,0) node[above left]{$+$};\filldraw[fill=white](0,0) circle(2pt);}$
};
\node[scale=0.7] at (-1.5,0) {
$\tikz{\draw(0,0) circle(1cm);\foreach \i in {80,260} \fill(\i:1) circle(1.5pt);
\draw[red,thick,->-={0.3}{},-<-={0.7}{}] (0,1) to[bend right=50pt] node[pos=0.5,inner sep=0](A){} (0,-1);
\draw[red,thick,-<-](A)--(0,0) node[above]{$+$};
\filldraw[fill=white](0,0) circle(2pt);}$
};
\node[scale=0.7] at (1.5,0) {
$\tikz{\draw(0,0) circle(1cm);\foreach \i in {80,260} \fill(\i:1) circle(1.5pt);
\draw[red,thick,->-={0.3}{},-<-={0.7}{}] (0,1) to[bend left=50pt] node[pos=0.5,inner sep=0](A){} (0,-1);
\draw[red,thick,-<-](A)--(0,0) node[above]{$+$};
\filldraw[fill=white](0,0) circle(2pt);}$
};
\node[scale=0.7] at (0,-1.2) {
$\tikz{\draw(0,0) circle(1cm);\foreach \i in {80,260} \fill(\i:1) circle(1.5pt);\draw[red,thick,->-](0,-1)--(0,0) node[below left]{$+$};\filldraw[fill=white](0,0) circle(2pt);}$
};
\node[scale=0.7] at (0,-3) {
$\tikz{\draw(0,0) circle(1cm);\foreach \i in {80,260} \fill(\i:1) circle(1.5pt);\draw[red,thick,-<-](0,-1)--(0,0) node[below left]{$+$};\filldraw[fill=white](0,0) circle(2pt);}$
};
\node[scale=0.8] at (1,1.2){$\varpi_1$};
\node[scale=0.8] at (1,3){$\varpi_2$};
\node[scale=0.8] at (1,-1.2){$\varpi_1$};
\node[scale=0.8] at (1,-3){$\varpi_2$};
\node[scale=0.8] at (-0.5,0){$\varpi_2$};
\node[scale=0.8] at (2.5,0){$\varpi_2$};
\draw[thick,->] (3,0) --node[midway,above]{$\mu_5$} (4,0);

\begin{scope}[xshift=6.5cm]
\node[scale=0.7] at (0,1.2) {
$\tikz{\draw(0,0) circle(1cm);\foreach \i in {80,260} \fill(\i:1) circle(1.5pt);\draw[red,thick,->-](0,1)--(0,0) node[above left]{$+$};\filldraw[fill=white](0,0) circle(2pt);}$
};
\node[scale=0.7] at (0,3) {
$\tikz{\draw(0,0) circle(1cm);\foreach \i in {80,260} \fill(\i:1) circle(1.5pt);\draw[red,thick,-<-](0,1)--(0,0) node[above left]{$+$};\filldraw[fill=white](0,0) circle(2pt);}$
};
\node[scale=0.7] at (-1.5,0) {
$\tikz{\draw(0,0) circle(1cm);\foreach \i in {80,260} \fill(\i:1) circle(1.5pt);
\draw[red,thick,->-={0.3}{},-<-={0.7}{}] (0,1) to[bend right=50pt] node[pos=0.5,inner sep=0](A){} (0,-1);
\draw[red,thick,-<-](A)--(0,0) node[above]{$+$};
\filldraw[fill=white](0,0) circle(2pt);}$
};
\node[scale=0.7] at (1.5,0) {
$\tikz{\draw(0,0) circle(1cm);\foreach \i in {80,260} \fill(\i:1) circle(1.5pt);
\draw[red,thick,->-={0.3}{},-<-={0.7}{}] (0,1) to[bend left=50pt] node[pos=0.5,inner sep=0](A){} (0,-1);
\draw[red,thick,-<-](A)--(0,0) node[above]{$+$};
\filldraw[fill=white](0,0) circle(2pt);}$
};
\node[scale=0.7] at (0,-1.2) {
$\tikz{\draw(0,0) circle(1cm);\foreach \i in {80,260} \fill(\i:1) circle(1.5pt);\draw[red,thick,->-](0,-1)--(0,0) node[below left]{$+$};\filldraw[fill=white](0,0) circle(2pt);}$
};
\node[scale=0.7] at (0,-3) {
$\tikz{\draw(0,0) circle(1cm);\foreach \i in {80,260} \fill(\i:1) circle(1.5pt);\draw[red,thick,-<-](0,-1)--(0,-0.3); \draw[red,thick,-<-={0.3}{},->-={0.75}{}] (0,1) to[out=-135,in=180] (0,-0.3) to[out=0,in=-45] (0,1);\filldraw[fill=white](0,0) circle(2pt);}$
};
\node[scale=0.8] at (1,1.2){$\varpi_1$};
\node[scale=0.8] at (1,3){$\varpi_2$};
\node[scale=0.8] at (1,-1.2){$\varpi_1$};
\node[scale=0.8] at (1,-3){$0$};
\node[scale=0.8] at (-0.5,0){$\varpi_2$};
\node[scale=0.8] at (2.5,0){$\varpi_2$};
\draw[thick,->] (3,0) --node[midway,above]{$\mu_3$} (4,0);
\end{scope}

\begin{scope}[xshift=13cm]
\node[scale=0.7] at (0,1.2) {
$\tikz{\draw(0,0) circle(1cm);\foreach \i in {80,260} \fill(\i:1) circle(1.5pt);\draw[red,thick,->-](0,1)--(0,0) node[above left]{$+$};\filldraw[fill=white](0,0) circle(2pt);}$
};
\node[scale=0.7] at (0,3) {
$\tikz{\draw(0,0) circle(1cm);\foreach \i in {80,260} \fill(\i:1) circle(1.5pt);\draw[red,thick,-<-](0,1)--(0,0.3); \draw[red,thick,-<-={0.3}{},->-={0.75}{}] (0,-1) to[out=135,in=180] (0,0.3) to[out=0,in=45] (0,-1);\filldraw[fill=white](0,0) circle(2pt);}$
};
\node[scale=0.7] at (-1.5,0) {
$\tikz{\draw(0,0) circle(1cm);\foreach \i in {80,260} \fill(\i:1) circle(1.5pt);
\draw[red,thick,->-={0.3}{},-<-={0.7}{}] (0,1) to[bend right=50pt] node[pos=0.5,inner sep=0](A){} (0,-1);
\draw[red,thick,-<-](A)--(0,0) node[above]{$+$};
\filldraw[fill=white](0,0) circle(2pt);}$
};
\node[scale=0.7] at (1.5,0) {
$\tikz{\draw(0,0) circle(1cm);\foreach \i in {80,260} \fill(\i:1) circle(1.5pt);
\draw[red,thick,->-={0.3}{},-<-={0.7}{}] (0,1) to[bend left=50pt] node[pos=0.5,inner sep=0](A){} (0,-1);
\draw[red,thick,-<-](A)--(0,0) node[above]{$+$};
\filldraw[fill=white](0,0) circle(2pt);}$
};
\node[scale=0.7] at (0,-1.2) {
$\tikz{\draw(0,0) circle(1cm);\foreach \i in {80,260} \fill(\i:1) circle(1.5pt);\draw[red,thick,->-](0,-1)--(0,0) node[below left]{$+$};\filldraw[fill=white](0,0) circle(2pt);}$
};
\node[scale=0.7] at (0,-3) {
$\tikz{\draw(0,0) circle(1cm);\foreach \i in {80,260} \fill(\i:1) circle(1.5pt);\draw[red,thick,-<-](0,-1)--(0,-0.3); \draw[red,thick,-<-={0.3}{},->-={0.75}{}] (0,1) to[out=-135,in=180] (0,-0.3) to[out=0,in=-45] (0,1);\filldraw[fill=white](0,0) circle(2pt);}$
};
\node[scale=0.8] at (1,1.2){$\varpi_1$};
\node[scale=0.8] at (1,3){$0$};
\node[scale=0.8] at (1,-1.2){$\varpi_1$};
\node[scale=0.8] at (1,-3){$0$};
\node[scale=0.8] at (-0.5,0){$\varpi_2$};
\node[scale=0.8] at (2.5,0){$\varpi_2$};
\end{scope}
\end{tikzpicture}
    \caption{Transformation of the lamination clusters corresponding to \cref{fig:transf_Grassmannian}. Here the values of $\theta_p^\ast$ are indicated. During the process, the associated dosp is $1|2|3$.}
    \label{fig:transf_Grassmannian_web}
\end{figure}
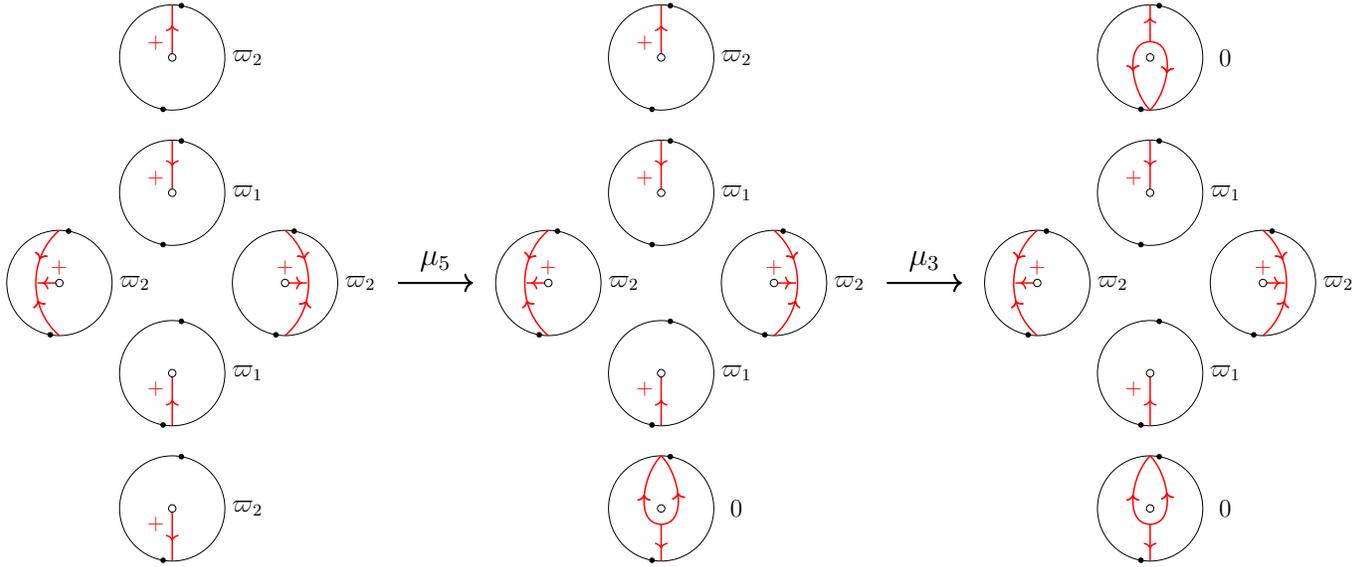

Deleting the vertices $3,5$ in the right-most picture in \cref{fig:transf_Grassmannian}, we get 
the initial quiver in \cite[Example 5.7]{FP21}. The corresponding sequence of lamination clusters is shown in \cref{fig:transf_Grassmannian_web}. Observe that the signed webs associated to the vertices $3,5$ in the right-most picture have $\theta_p^\ast=0$, and thus do not contribute to the dosp at $p$. During this sequence, the associated dosp is $1|2|3$ (dominant chamber). 

Then one can walk around the exchange graph of type $D_4$ (freezing the vertices $3,5$) to get all the dosps following the algorithm illustrated in \cite[Example 7.7 and Lemma 7.8]{FP21}. See also \cite[Figure 2]{FP21}. 
For example, the mutation sequence and the corresponding lamination clusters that realize the dosp mutation $1|2|3 \to 12|3 \to 123^+$ are shown in \cref{fig:seq_dosp,fig:seq_dosp_web}, respectively.

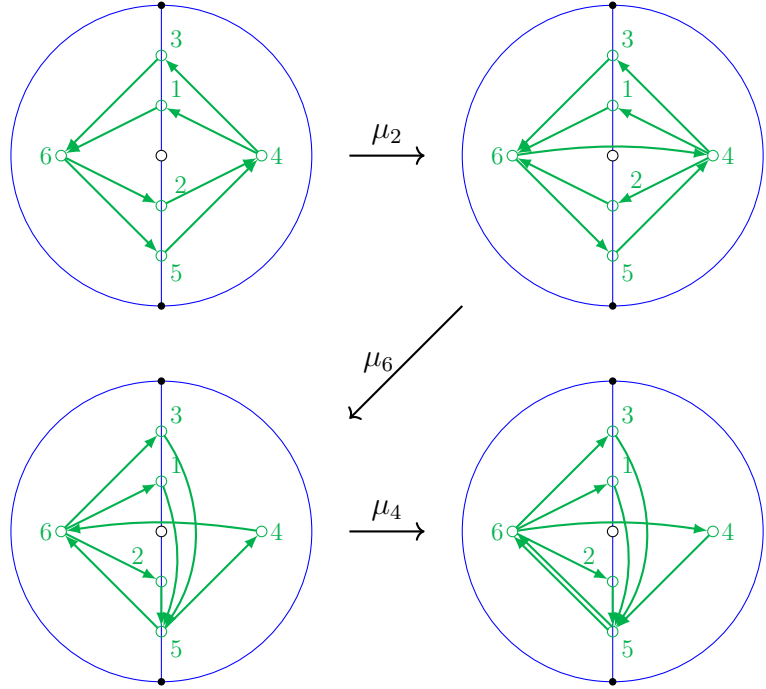
\begin{figure}[ht]
    \centering
\begin{tikzpicture}
\draw[blue] (0,0) circle(2cm);
\draw[blue] (0,2) -- (0,-2);
\node[fill,circle,inner sep=1pt] at (0,2) {};
\node[fill,circle,inner sep=1pt] at (0,-2) {};
\filldraw[fill=white](0,0) circle(2pt);
{\color{mygreen}
\draw(0,0.667) circle(2pt) coordinate(A1) node[above right,scale=0.8]{$1$};
\draw(0,1.333) circle(2pt) coordinate(A2) node[above right,scale=0.8]{$3$};
\draw(0,-0.667) circle(2pt) coordinate(B1) node[above right=0.2em,scale=0.8]{$2$};
\draw(0,-1.333) circle(2pt) coordinate(B2) node[below right,scale=0.8]{$5$};
\draw(1.333,0) circle(2pt) coordinate(C) node[right,scale=0.8]{$4$};
\draw(-1.333,0) circle(2pt) coordinate(D) node[left,scale=0.8]{$6$};
\qarrow{C}{A2}
\qarrow{B2}{C}
\qarrow{D}{B2}
\qarrow{A2}{D}
\qarrow{B1}{C}
\qarrow{C}{A1}
\qarrow{A1}{D}
\qarrow{D}{B1}
}
\draw[->,thick] (2.5,0) --node[midway,above]{$\mu_2$} (3.5,0);
\begin{scope}[xshift=6cm]
\draw[blue] (0,0) circle(2cm);
\draw[blue] (0,2) -- (0,-2);
\node[fill,circle,inner sep=1pt] at (0,2) {};
\node[fill,circle,inner sep=1pt] at (0,-2) {};
\filldraw[fill=white](0,0) circle(2pt);
{\color{mygreen}
\draw(0,0.667) circle(2pt) coordinate(A1) node[above right,scale=0.8]{$1$};
\draw(0,1.333) circle(2pt) coordinate(A2) node[above right,scale=0.8]{$3$};
\draw(0,-0.667) circle(2pt) coordinate(B1) node[above right=0.4em,scale=0.8]{$2$};
\draw(0,-1.333) circle(2pt) coordinate(B2) node[below right,scale=0.8]{$5$};
\draw(1.333,0) circle(2pt) coordinate(C) node[right,scale=0.8]{$4$};
\draw(-1.333,0) circle(2pt) coordinate(D) node[left,scale=0.8]{$6$};
\qarrow{C}{A2}
\qarrow{B2}{C}
\qarrow{D}{B2}
\qarrow{A2}{D}
\qarrow{C}{B1}
\qarrow{C}{A1}
\qarrow{A1}{D}
\qarrow{B1}{D}
\uniarrow{D}{C}{bend left=0.3cm,shorten >=2pt,shorten <=2pt}
}
\draw[->,thick] (-2,-2) --node[midway,left]{$\mu_6$} ++(-1.5,-1.5);
\end{scope}
\begin{scope}[yshift=-5cm]
\draw[blue] (0,0) circle(2cm);
\draw[blue] (0,2) -- (0,-2);
\node[fill,circle,inner sep=1pt] at (0,2) {};
\node[fill,circle,inner sep=1pt] at (0,-2) {};
\filldraw[fill=white](0,0) circle(2pt);
{\color{mygreen}
\draw(0,0.667) circle(2pt) coordinate(A1) node[above right,scale=0.8]{$1$};
\draw(0,1.333) circle(2pt) coordinate(A2) node[above right,scale=0.8]{$3$};
\draw(0,-0.667) circle(2pt) coordinate(B1) node[above left=0.4em,scale=0.8]{$2$};
\draw(0,-1.333) circle(2pt) coordinate(B2) node[below right,scale=0.8]{$5$};
\draw(1.333,0) circle(2pt) coordinate(C) node[right,scale=0.8]{$4$};
\draw(-1.333,0) circle(2pt) coordinate(D) node[left,scale=0.8]{$6$};
\qarrow{B2}{C}
\qarrow{B2}{D}
\qarrow{D}{A2}
\qarrow{D}{A1}
\qarrow{D}{B1}
\uniarrow{C}{D}{bend right=0.3cm,shorten >=2pt,shorten <=2pt}
\uniarrow{B1}{B2}{shorten >=2pt,shorten <=2pt}
\uniarrow{A1}{B2}{bend left=0.7cm,shorten >=2pt,shorten <=2pt}
\uniarrow{A2}{B2}{bend left=1.2cm,shorten >=2pt,shorten <=2pt}
}
\draw[->,thick] (2.5,0) --node[midway,above]{$\mu_4$} (3.5,0);
\end{scope}

\begin{scope}[xshift=6cm,yshift=-5cm]
\draw[blue] (0,0) circle(2cm);
\draw[blue] (0,2) -- (0,-2);
\node[fill,circle,inner sep=1pt] at (0,2) {};
\node[fill,circle,inner sep=1pt] at (0,-2) {};
\filldraw[fill=white](0,0) circle(2pt);
{\color{mygreen}
\draw(0,0.667) circle(2pt) coordinate(A1) node[above right,scale=0.8]{$1$};
\draw(0,1.333) circle(2pt) coordinate(A2) node[above right,scale=0.8]{$3$};
\draw(0,-0.667) circle(2pt) coordinate(B1) node[above left=0.4em,scale=0.8]{$2$};
\draw(0,-1.333) circle(2pt) coordinate(B2) node[below right,scale=0.8]{$5$};
\draw(1.333,0) circle(2pt) coordinate(C) node[right,scale=0.8]{$4$};
\draw(-1.333,0) circle(2pt) coordinate(D) node[left,scale=0.8]{$6$};
\qarrow{C}{B2}
\qarrow{D}{A2}
\qarrow{D}{A1}
\qarrow{D}{B1}
\uniarrow{D}{C}{bend left=0.3cm,shorten >=2pt,shorten <=2pt}
\uniarrow{B1}{B2}{shorten >=2pt,shorten <=2pt}
\uniarrow{A1}{B2}{bend left=0.7cm,shorten >=2pt,shorten <=2pt}
\uniarrow{A2}{B2}{bend left=1.2cm,shorten >=2pt,shorten <=2pt}
\node[inner sep=1.5pt] (B2) at (B2) {};
\node[inner sep=1.5pt] (D) at (D) {};
\uniarrow{B2.90}{D.-20}{}
\uniarrow{B2.160}{D.-80}{}
}
\end{scope}
\end{tikzpicture}
    \caption{Mutation sequence that realizes the dosp mutation $1|2|3 \to 12|3 \to 123^+$.}
    \label{fig:seq_dosp}
\end{figure}

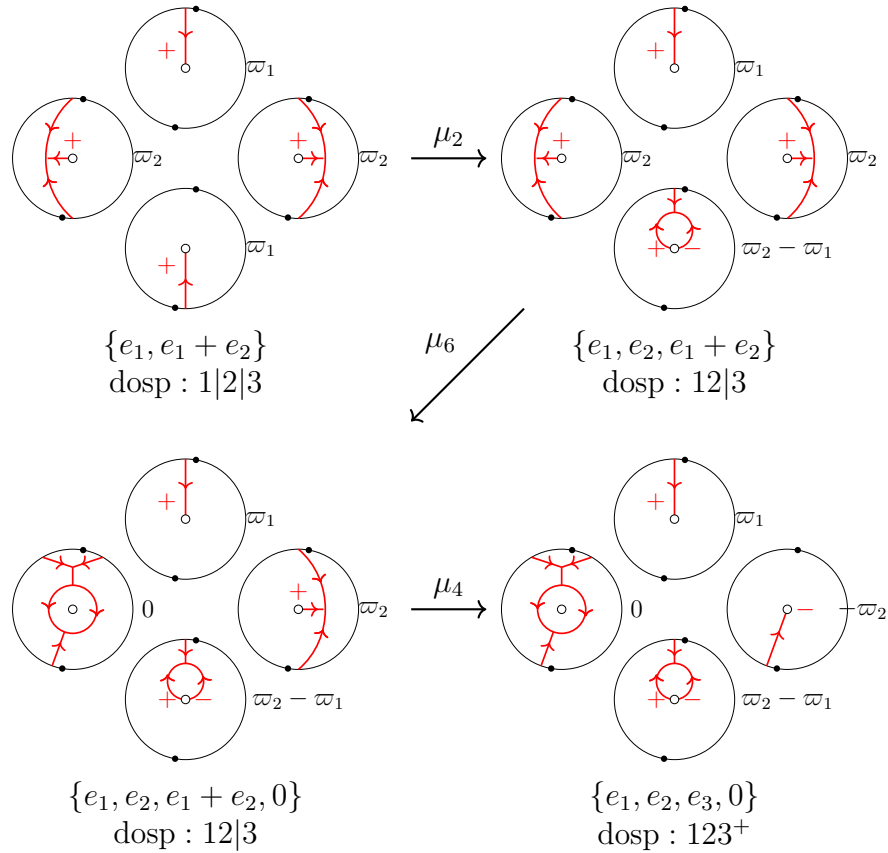
\begin{figure}[ht]
    \centering
\begin{tikzpicture}
\node[scale=0.8] at (0,1.2) {
$\tikz{\draw(0,0) circle(1cm);\foreach \i in {80,260} \fill(\i:1) circle(1.5pt);
\draw[red,thick,->-](0,1)--(0,0) node[above left]{$+$};\filldraw[fill=white](0,0) circle(2pt);}$
};
\node[scale=0.8] at (-1.5,0) {
$\tikz{\draw(0,0) circle(1cm);\foreach \i in {80,260} \fill(\i:1) circle(1.5pt);
\draw[red,thick,->-={0.3}{},-<-={0.7}{}] (0,1) to[bend right=50pt] node[pos=0.5,inner sep=0](A){} (0,-1);
\draw[red,thick,-<-](A)--(0,0) node[above]{$+$};
\filldraw[fill=white](0,0) circle(2pt);}$
};
\node[scale=0.8] at (1.5,0) {
$\tikz{\draw(0,0) circle(1cm);\foreach \i in {80,260} \fill(\i:1) circle(1.5pt);
\draw[red,thick,->-={0.3}{},-<-={0.7}{}] (0,1) to[bend left=50pt] node[pos=0.5,inner sep=0](A){} (0,-1);
\draw[red,thick,-<-](A)--(0,0) node[above]{$+$};
\filldraw[fill=white](0,0) circle(2pt);}$
};
\node[scale=0.8] at (0,-1.2) {
$\tikz{\draw(0,0) circle(1cm);\foreach \i in {80,260} \fill(\i:1) circle(1.5pt);
\draw[red,thick,->-](0,-1)--(0,0) node[below left]{$+$};\filldraw[fill=white](0,0) circle(2pt);}$
};
\node[scale=0.8] at (1,1.2){$\varpi_1$};
\node[scale=0.8] at (1,-1.2){$\varpi_1$};
\node[scale=0.8] at (-0.5,0){$\varpi_2$};
\node[scale=0.8] at (2.5,0){$\varpi_2$};
\node at (0,-2.5) {$\{e_1,e_1+e_2\}$};
\node at (0,-3) {$\mathrm{dosp}: 1|2|3$};
\draw[thick,->] (3,0) --node[midway,above]{$\mu_2$} (4,0);

\begin{scope}[xshift=6.5cm]
\node[scale=0.8] at (0,1.2) {
$\tikz{\draw(0,0) circle(1cm);\foreach \i in {80,260} \fill(\i:1) circle(1.5pt);
\draw[red,thick,->-](0,1)--(0,0) node[above left]{$+$};\filldraw[fill=white](0,0) circle(2pt);}$
};
\node[scale=0.8] at (-1.5,0) {
$\tikz{\draw(0,0) circle(1cm);\foreach \i in {80,260} \fill(\i:1) circle(1.5pt);
\draw[red,thick,->-={0.3}{},-<-={0.7}{}] (0,1) to[bend right=50pt] node[pos=0.5,inner sep=0](A){} (0,-1);
\draw[red,thick,-<-](A)--(0,0) node[above]{$+$};
\filldraw[fill=white](0,0) circle(2pt);}$
};
\node[scale=0.8] at (1.5,0) {
$\tikz{\draw(0,0) circle(1cm);\foreach \i in {80,260} \fill(\i:1) circle(1.5pt);
\draw[red,thick,->-={0.3}{},-<-={0.7}{}] (0,1) to[bend left=50pt] node[pos=0.5,inner sep=0](A){} (0,-1);
\draw[red,thick,-<-](A)--(0,0) node[above]{$+$};
\filldraw[fill=white](0,0) circle(2pt);}$
};
\node[scale=0.8] at (0,-1.2) {
$\tikz{\draw(0,0) circle(1cm);\foreach \i in {80,260} \fill(\i:1) circle(1.5pt);
\draw[red,thick,->-={0.3}{},-<-={0.75}{}](0,0) arc(-90:270:0.3); \draw[red,thick,->-={0.7}{}] (0,1) -- (0,0.6);\node[red] at (-0.3,0) {$+$};\node[red] at (0.3,0) {$-$};\filldraw[fill=white](0,0) circle(2pt);}$
};
\node[scale=0.8] at (1,1.2){$\varpi_1$};
\node[scale=0.8] at (1.5,-1.2){$\varpi_2-\varpi_1$};
\node[scale=0.8] at (-0.5,0){$\varpi_2$};
\node[scale=0.8] at (2.5,0){$\varpi_2$};
\node at (0,-2.5) {$\{e_1,e_2,e_1+e_2\}$};
\node at (0,-3) {$\mathrm{dosp}: 12|3$};
\draw[thick,->] (-2,-2) --node[midway,above left]{$\mu_6$} ++(-1.5,-1.5);
\end{scope}

\begin{scope}[yshift=-6cm]
\node[scale=0.8] at (0,1.2) {
$\tikz{\draw(0,0) circle(1cm);\foreach \i in {80,260} \fill(\i:1) circle(1.5pt);
\draw[red,thick,->-](0,1)--(0,0) node[above left]{$+$};\filldraw[fill=white](0,0) circle(2pt);}$
};
\node[scale=0.8] at (-1.5,0) {
$\tikz{\draw(0,0) circle(1cm);\foreach \i in {80,260} \fill(\i:1) circle(1.5pt);
\draw[red,thick,-<-={0}{},->-={0.5}{}] (0,0) circle(0.4cm);
\draw[red,thick] (0,0.4) -- (0,0.7);
\draw[red,thick,-<-] (0,0.7) -- (60:1);
\draw[red,thick,-<-] (0,0.7) -- (120:1);
\draw[red,thick,-<-] (-110:0.4) -- (-110:1);
\filldraw[fill=white](0,0) circle(2pt);}$
};
\node[scale=0.8] at (1.5,0) {
$\tikz{\draw(0,0) circle(1cm);\foreach \i in {80,260} \fill(\i:1) circle(1.5pt);
\draw[red,thick,->-={0.3}{},-<-={0.7}{}] (0,1) to[bend left=50pt] node[pos=0.5,inner sep=0](A){} (0,-1);
\draw[red,thick,-<-](A)--(0,0) node[above]{$+$};
\filldraw[fill=white](0,0) circle(2pt);}$
};
\node[scale=0.8] at (0,-1.2) {
$\tikz{\draw(0,0) circle(1cm);\foreach \i in {80,260} \fill(\i:1) circle(1.5pt);
\draw[red,thick,->-={0.3}{},-<-={0.75}{}](0,0) arc(-90:270:0.3); \draw[red,thick,->-={0.7}{}] (0,1) -- (0,0.6);\node[red] at (-0.3,0) {$+$};\node[red] at (0.3,0) {$-$};\filldraw[fill=white](0,0) circle(2pt);}$
};
\node[scale=0.8] at (1,1.2){$\varpi_1$};
\node[scale=0.8] at (1.5,-1.2){$\varpi_2-\varpi_1$};
\node[scale=0.8] at (-0.5,0){$0$};
\node[scale=0.8] at (2.5,0){$\varpi_2$};
\node at (0,-2.5) {$\{e_1,e_2,e_1+e_2,0\}$};
\node at (0,-3) {$\mathrm{dosp}: 12|3$};
\draw[thick,->] (3,0) --node[midway,above]{$\mu_4$} (4,0);
\end{scope}

\begin{scope}[xshift=6.5cm,yshift=-6cm]
\node[scale=0.8] at (0,1.2) {
$\tikz{\draw(0,0) circle(1cm);\foreach \i in {80,260} \fill(\i:1) circle(1.5pt);
\draw[red,thick,->-](0,1)--(0,0) node[above left]{$+$};\filldraw[fill=white](0,0) circle(2pt);}$
};
\node[scale=0.8] at (-1.5,0) {
$\tikz{\draw(0,0) circle(1cm);\foreach \i in {80,260} \fill(\i:1) circle(1.5pt);
\draw[red,thick,-<-={0}{},->-={0.5}{}] (0,0) circle(0.4cm);
\draw[red,thick] (0,0.4) -- (0,0.7);
\draw[red,thick,-<-] (0,0.7) -- (60:1);
\draw[red,thick,-<-] (0,0.7) -- (120:1);
\draw[red,thick,-<-] (-110:0.4) -- (-110:1);
\filldraw[fill=white](0,0) circle(2pt);}$
};
\node[scale=0.8] at (1.5,0) {
$\tikz{\draw(0,0) circle(1cm);\foreach \i in {80,260} \fill(\i:1) circle(1.5pt);
\draw[red,thick,-<-] (0,0) -- (-110:1);
\node[red] at (0.3,0){$-$};
\filldraw[fill=white](0,0) circle(2pt);}$
};
\node[scale=0.8] at (0,-1.2) {
$\tikz{\draw(0,0) circle(1cm);\foreach \i in {80,260} \fill(\i:1) circle(1.5pt);
\draw[red,thick,->-={0.3}{},-<-={0.75}{}](0,0) arc(-90:270:0.3); \draw[red,thick,->-={0.7}{}] (0,1) -- (0,0.6);\node[red] at (-0.3,0) {$+$};\node[red] at (0.3,0) {$-$};\filldraw[fill=white](0,0) circle(2pt);}$
};
\node[scale=0.8] at (1,1.2){$\varpi_1$};
\node[scale=0.8] at (1.5,-1.2){$\varpi_2-\varpi_1$};
\node[scale=0.8] at (-0.5,0){$0$};
\node[scale=0.8] at (2.5,0){$-\varpi_2$};
\node at (0,-2.5) {$\{e_1,e_2,e_3,0\}$};
\node at (0,-3) {$\mathrm{dosp}: 123^+$};
\end{scope}
\end{tikzpicture}
    \caption{The sequence of lamination clusters corresponding to \cref{fig:seq_dosp}. The associated $P$-clusters and the dosps are also shown.}
    \label{fig:seq_dosp_web}
\end{figure}

\section{A brief discussion on higher rank generalizations}\label{sec:higher}
Here we propose a possible direction of higher rank generalizations of our formulation of unbounded laminations. See also the first arXiv version of \cite{Kim21}. 

We propose to use the approach of \cite{MOY} and \cite{CKM}. 
For a complex semisimple Lie algebra $\mathfrak{g}$, a $\mathfrak{g}$-web on $\Sigma$ is an immersed oriented uni-trivalent graph $W$ on $\Sigma$, where each edge is colored by a vertex $s \in S$ of the Dynkin diagram of $\mathfrak{g}$, corresponding to the $s$-th fundamental weight $\varpi_s$. 
The boundary condition on the univalent vertices is the same as in the $\fsl_3$-case. 
At each trivalent vertex, the colors $(s,t,u)$ are required to satisfy the condition
\begin{align*}
    \dim_\bC (V(\varpi_s) \otimes V(\varpi_t) \otimes V(\varpi_u))^G =1,
\end{align*}
where $V(\lambda)$ denotes the irreducible representation with the highest weight $\lambda$. 

A signed $\mathfrak{g}$-web is a $\mathfrak{g}$-web together with a sign ($+$ or $-$) assigned to each end incident to a puncture. Then a rational unbounded $\mathfrak{g}$-laminations should be defined as a certain equivalence class of a signed $\mathfrak{g}$-web equipped with rational weights on its components. A correct description of equivalence relations as given in \cref{sec:lamination} will require additional elaboration in the higher rank cases. Assume that the space $\mathcal{L}_\mathfrak{g}^x(\Sigma,\bQ)$ of rational unbounded $\mathfrak{g}$-laminations on $\Sigma$ is appropriately defined. 

For each puncture $p \in \bM_\circ$, we should have the tropicalized Casimir map
\begin{align*}
    \theta_p: \mathcal{L}_\mathfrak{g}^x(\Sigma,\bQ) \to \mathsf{P}_\bQ^\vee.
\end{align*}
The contribution of each signed end to $\theta_p$ should be as shown in \cref{fig:weight-contribution-general}. Here $\ast: S \to S$, $s \mapsto s^\ast$ is the Dynkin involution defined by $w_0\varpi_s=-\varpi_{s^\ast}$, where $w_0 \in W(\mathfrak{g})$ is the longest element of the Weyl group. 

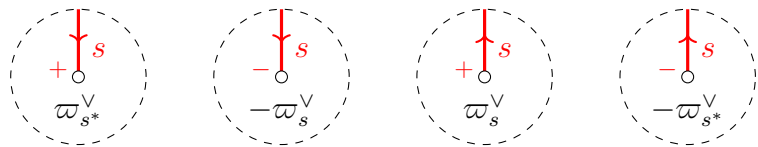
\begin{figure}[ht]
\begin{tikzpicture}[scale=0.9]
    \draw[dashed, fill=white] (0,0) circle [radius=1];
    \draw[red,very thick,->-] (0,1) -- (0,0);
    \filldraw[fill=white](0,0) circle(2.5pt);
    \node[red,scale=0.8] at (-0.3,0.1) {$+$};
    \node[red] at (0.3,0.4) {$s$};
    \node at (0,-0.5) {$\varpi^\vee_{s^\ast}$};
    
    \begin{scope}[xshift=3cm]
    \draw[dashed, fill=white] (0,0) circle [radius=1];
    \draw[red,very thick,->-] (0,1) -- (0,0);
    \filldraw[fill=white](0,0) circle(2.5pt);
    \node[red,scale=0.8] at (-0.3,0.1) {$-$};
    \node[red] at (0.3,0.4) {$s$};
    \node at (0,-0.5) {$-\varpi^\vee_{s}$};
    \end{scope}
    
    \begin{scope}[xshift=6cm]
    \draw[dashed, fill=white] (0,0) circle [radius=1];
    \draw[red,very thick,-<-] (0,1) -- (0,0);
    \filldraw[fill=white](0,0) circle(2.5pt);
    \node[red,scale=0.8] at (-0.3,0.1) {$+$};
    \node[red] at (0.3,0.4) {$s$};
    \node at (0,-0.5) {$\varpi^\vee_s$};
    \end{scope}
    
    \begin{scope}[xshift=9cm]
    \draw[dashed, fill=white] (0,0) circle [radius=1];
    \draw[red,very thick,-<-] (0,1) -- (0,0);
    \filldraw[fill=white](0,0) circle(2.5pt);
    \node[red,scale=0.8] at (-0.3,0.1) {$-$};
    \node[red] at (0.3,0.4) {$s$};
    \node at (0,-0.5) {$-\varpi^\vee_{s^\ast}$};
    \end{scope}
\end{tikzpicture}
    \caption{Contribution to $\theta_p(\hL)$ from each end incident to the puncture $p$.}
    \label{fig:weight-contribution-general}
\end{figure}
Observe that the data of signs and the fundamental weights are enough for the values of $\theta_p$ to exhaust the coweight lattice $\mathsf{P}^\vee$, so we expect no additional data assigned to the ends. 

Recall the construction of cluster variables $\Delta_{\lambda,\mu,\nu}$ on $\A_{G^\vee,\Sigma}$ from \cite[Section 9]{GS19}. Here $G^\vee$ is the simply-connected group with the Langlands dual Lie algebra $\mathfrak{g}^\vee$. In particular, they are determined by the triple $(\lambda,\mu,\nu)$ of weights for $\mathfrak{g}^\vee$ (=coweights for $\mathfrak{g}$) up to a scalar. 
Based on the quantum duality conjecture \cite{FG09}, the $\mathfrak{g}$-webs $W$ should correspond to functions $F_W$ on $\A_{G^\vee,\Sigma}$ in a similar manner as in \cite{IKar,IK22}, at least when $\bM_\circ=\emptyset$. Some of the $\mathfrak{g}$-webs corresponding to cluster variables on the dual side are shown in \cref{fig:elementary laminations}, together with the $\mathfrak{g}^\vee$-weights (=$\mathfrak{g}$-coweights) of the cluster variables.

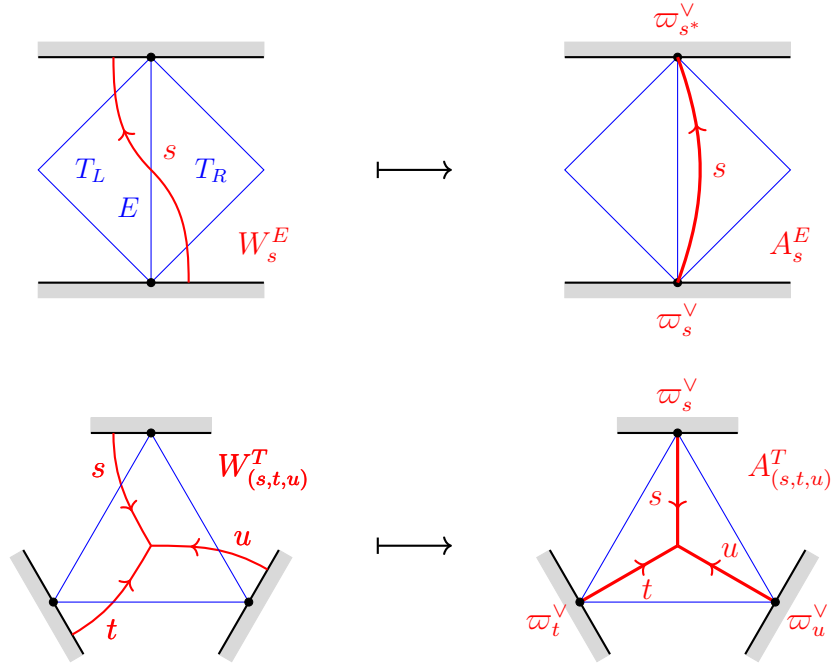
\begin{figure}[htbp]
\centering
\begin{tikzpicture}
\bline{-1.5,-1.5}{1.5,-1.5}{0.2};
\tline{-1.5,1.5}{1.5,1.5}{0.2};
\draw[blue] (-1.5,0) -- (0,-1.5) -- (1.5,0) -- (0,1.5) -- cycle;
\draw[blue] (0,1.5) -- (0,-1.5);
\node[blue,scale=0.9] at (-0.3,-0.5) {$E$};
\node[blue,scale=0.9] at (-0.8,-0) {$T_L$};
\node[blue,scale=0.9] at (0.8,-0) {$T_R$};
\filldraw (0,1.5) circle(1.5pt);
\filldraw (0,-1.5) circle(1.5pt);
\draw[red,thick,->-={0.7}{}] (0.5,-1.5) to[out=90,in=-45] (0,0) node[above right]{$s$} to[out=135,in=-90] (-0.5,1.5);
\node[red] at (1.5,-1.0) {$W^E_s$};

{\begin{scope}[xshift=7cm]
\bline{-1.5,-1.5}{1.5,-1.5}{0.2};
\tline{-1.5,1.5}{1.5,1.5}{0.2};
\draw[blue] (-1.5,0) -- (0,-1.5) -- (1.5,0) -- (0,1.5) -- cycle;
\draw[blue] (0,1.5) -- (0,-1.5);
\filldraw (0,1.5) circle(1.5pt);
\filldraw (0,-1.5) circle(1.5pt);
\draw[red,very thick,->-={0.7}{}] (0,-1.5) to[bend right=20] node[midway,right] {$s$} (0,1.5);
\node[red] at (1.5,-1.0) {$A^E_s$};
\node[red] at (0,-2) {$\varpi_s^\vee$};
\node[red] at (0,2) {$\varpi_{s^\ast}^\vee$};
\end{scope}}
\draw[thick,|->] (3,0) -- (4,0);

{\begin{scope}[yshift=-5cm]
\foreach \i in {0,120,240}
{\begin{scope}[rotate=\i]
\tline{-0.8,1.5}{0.8,1.5}{0.2};
\draw[red,thick,-<-={0.4}{}] (0,0) to[out=120,in=-90] (-0.5,1.5);
\end{scope}
\node[red] at (-0.7,1) {$s$};
\begin{scope}[rotate=120]
\node[red] at (-0.7,1) {$t$};
\end{scope}
\begin{scope}[rotate=240]
\node[red] at (-0.7,1) {$u$};
\end{scope}
\node[red] at (1.5,1.0) {$W^T_{(s,t,u)}$};
}
\draw[blue] (-30:1.5) -- (90:1.5) -- (210:1.5) --cycle;
\foreach \i in {-30,90,210}
\filldraw (\i:1.5) circle(1.5pt);
\draw[thick,|->] (3,0) -- (4,0);
\end{scope}}

{\begin{scope}[xshift=7cm,yshift=-5cm]
\foreach \i in {0,120,240}
{\begin{scope}[rotate=\i]
\tline{-0.8,1.5}{0.8,1.5}{0.2};
\draw[red,very thick,-<-={0.4}{}] (0,0) -- (0,1.5);
\end{scope}}
\node[red] at (1.5,1.0) {$A^T_{(s,t,u)}$};
\node[red] at (115:0.7) {$s$};
\node[red] at (115+120:0.7) {$t$};
\node[red] at (115+240:0.7) {$u$};
\node[red] at (90:2) {$\varpi_s^\vee$};
\node[red] at (90+120:2) {$\varpi_t^\vee$};
\node[red] at (90+240:2) {$\varpi_u^\vee$};
\draw[blue] (-30:1.5) -- (90:1.5) -- (210:1.5) --cycle;
\foreach \i in {-30,90,210}
\filldraw (\i:1.5) circle(1.5pt);
\end{scope}}
\end{tikzpicture}
    \caption{Some of the `elementary' $\mathfrak{g}$-webs (Left) and the corresponding cluster varialbes on $\A_{G^\vee,\Sigma}$ with $\mathfrak{g}^\vee$-weights (Right). When some of the marked points are punctures and the signs of webs are $+$, the correspondence is still the same.}
    \label{fig:elementary laminations}
\end{figure}
In general, the shear coordinates of $W$ should be the same as the $g$-vector of the `pointed' element $F_W$ (cf. \cite{Qin21}). In particular, the shear coordinates of the $\mathfrak{g}$-webs shown in \cref{fig:elementary laminations} are readily determined by this requirement (as they correspond to cluster variables): 
\begin{align*}
    F_W = \prod_{i \in I} A_i^{\check{\sfx}_i(W)},
\end{align*}
where we choose the cluster chart containing the cluster variable $F_W$\footnote{Here we should slightly modify the shear coordinates on the boundary to be the `Langlands dual' version: see \cite[Section 5]{IK22}.}. 
For example, the web $W_s^E$ has exactly one $+1$ shear coordinate at the vertex on the edge $E$ labeled by $s \in S$, while the others are $0$. Observe that the assignment of  the (co-)weights and the web description of cluster variables are consistent with the works \cite{GS19,IYsl3,IYsp4}.

Finally, the Weyl group action on the unbounded $\mathfrak{g}$-laminations should be defined so that the tropicalized Casimir map is equivariant with respect to the natural action on the coweight lattice. This requirement determines the action on a single signed end similarly to \cref{def:Weyl_action}. However, the check for the global consistency will require a hard elaboration as in \cref{subsec:Weyl_action}.

We remark that in the case where $\mathfrak{g}=\mathfrak{sl}_3$, the intersection pairing between the bounded $\fsl_3$-webs and certain unbounded $\fsl_3$-webs is studied by Shen--Sun--Weng \cite{SSW}. It should be the same as the tropicalization of the quantum duality map. 
Intersection coordinates of bounded $\mathfrak{sp}_4$-webs will be studied in the work of Ishibashi--Sun--Yuasa \cite{ISY}.
The higher rank version of shear coordinates for a general $\mathfrak{g}$-web should be defined so that the compatibility relation \cite[Proposition 4.10]{IK22} holds with these intersection coordinates. 

\appendix

\section{Cluster varieties associated with the pair \texorpdfstring{$(\fsl_3,\Sigma)$}{(sl3,Sigma)}}\label{sec:appendix}
Here we briefly recall the general theory of cluster varieties \cite{FG09}, and the construction of the seed pattern $\bs(\fsl_3,\Sigma)$ that encodes the cluster structures of the spaces of $\fsl_3$-laminations in consideration. 

\subsection{Cluster varieties}\label{subsec:seeds}

Fix a finite set $I=\{1,\dots,N\}$ of indices, and let $\cF_A$ and $\cF_X$ be fields both isomorphic to the field $\bQ(z_1,\dots,z_N)$ of rational functions on $N$ variables. 
We also fix a subset $I_\uf \subset I$ (\lq\lq unfrozen'') and let $I_\f:=I \setminus I_\uf$ (\lq\lq frozen''). 
Recall that a \emph{(labeled, skew-symmetric) seed} in $(\cF_A,\cF_X)$ is a triple $(\ve,\mathbf{A},\mathbf{X})$, where
\begin{itemize}
    \item $\ve=(\ve_{ij})_{i,j \in I}$ is a skew-symmetric matrix (\emph{exchange matrix}) with values in $\frac{1}{2}\bZ$ such that $\ve_{ij} \in \bZ$ unless $(i,j) \in I_\f \times I_\f$. 
    \item $\mathbf{A}=(A_i)_{i \in I}$ and $\mathbf{X}=(X_i)_{i \in I}$ are tuples of algebraically independent elements (\emph{cluster $\A$-} and \emph{$\X$-variables}) in $\cF_A$ and $\cF_X$, respectively.
\end{itemize}
We say that two seeds in $(\cF_A,\cF_X)$ are \emph{mutation-equivalent} if they are transformed to each other by a finite sequence of mutations and permutations. The equivalence class is called a \emph{mutation class}. Given a mutation class $\sfs$ of seeds, we get:
\begin{description}
    \item[The labeled exchange graph] a graph $\bExch_\sfs$ that parametrizes the seeds in $\sfs$. When no confusion can occur, we simply denote its vertices by $v \in \bExch_\sfs$ instead of $v \in V(\bExch_\sfs)$. 
    For $v \in V(\bExch_\sfs)$, we denote by $\sfs^{(v)}=(\ve^{(v)},\mathbf{A}^{(v)},\mathbf{X}^{(v)})$ the corresponding seed.
    \item[The cluster varieties] schemes
    \begin{align*}
    \X_\sfs:= \bigcup_{v \in \bExch_\sfs} \X_{(v)}, \quad  \A_\sfs:= \bigcup_{v \in \bExch_\sfs} \A_{(v)}.
    \end{align*}
    Here $\X_{(v)} := \bT_{N^{(v)}}
    $ and $\A_{(v)} := \bT_{M^{(v)}}
    $; $N^{(v)}=\bigoplus_{i \in I}\bZ e_i^{(v)}$ and $M^{(v)}=\bigoplus_{i \in I}\bZ f_i^{(v)}$ are lattices dual to each other and $T_\Lambda:=\Hom(\Lambda^*, \bG_m)$ denotes the algebraic torus corresponding to a lattice $\Lambda$.
    The tori $\X_{(v)}$, $v \in \bExch_\sfs$ are patched together according to the cluster $\X$-transformations, and similarly for the $\A$-side. 
    We also have the cluster variety $\X_\sfs^\uf:=\bigcup_{v \in \bExch_\sfs} \X^\uf_{(v)}$ without frozen coordinates.
    Here $\X_{(v)}^\uf:=\bT_{N^{(v)}_\uf} 
    $ and $N_\uf^{(v)} \subset N^{(v)}$ is the sub-lattice spanned by $e_i^{(v)}$ for $i \in I_\uf$. 
    \item[The ensemble map] a morphism $p: \A_\sfs \to \X_\sfs^\uf$ given by the linear map 
    \begin{align}\label{eq:ensemble_map}
    p_{(v)}^{*}: N^{(v)}_\uf \to M^{(v)}, \quad e_i^{(v)} \mapsto \sum_{j \in I}\ve_{ij}^{(v)}f_j^{(v)}
    \end{align}
    on each torus at $v \in \bExch_\sfs$. 
    \item[The cluster modular group] a subgroup $\Gamma_\sfs \subset \mathrm{Aut}(\bExch_\sfs)$ consisting of graph automorphisms $\phi$ which preserve the exchange matrices $\ve^{(v)}$ assigned to vertices and the labels on the edges. 
    The cluster modular group acts on the cluster varieties $\A_\sfs$ and $\X_\sfs$ so that $\phi^*Z_i^{(v)}=Z_i^{(\phi^{-1}(v))}$ for all $\phi \in \Gamma_\bs$, $v \in \bExch_\bs$ and $i \in I$, where $(Z,\cZ) \in \{(A,\A),(X,\X)\}$. These actions commute with the ensemble map. 
\end{description}
The triple $(\A_\sfs,\X_\sfs^\uf,p)$ is called a \emph{cluster ensemble}. In this paper, we skip the discussion on the relation between $\A_\sfs$ and $\X_\sfs$. See \cite[Appendix A]{IK22} for a detail.

\bigskip
\paragraph{\textbf{Cluster exact sequence}}

For $v,v' \in \bExch_\sfs$, there are canonical identifications $\ker p_{(v)}^* \cong \ker p_{(v')}^*$ and $\coker p_{(v)}^* \cong \coker p_{(v')}^*$ given by the seed mutations in the sense of Fock--Goncharov \cite[(7),(8)]{FG09} and permutations.
Under these identifications, 
let 
\begin{align*}
    K := \ker p^*_{(v)}, \qquad
    K^\vee := (\coker p^*_{(v)})^*,
\end{align*}
and $H_\X:=\bT_{K}$, $H_\A:= \bT_{(K^\vee)^\ast} (=\bT_{\coker p_{(v)}^\ast})$.
The short exact sequences 
for the maps \eqref{eq:ensemble_map} commute with the cluster transformations by \cite[Lemmas 2.7 and 2.10]{FG09}, and thus combine to give the following \emph{cluster exact sequence}:
\begin{align}\label{eq:cluster_exact_sequence}
    1 \to H_\A \to \A_\sfs \xrightarrow{p} \X^\uf_\sfs \xrightarrow{\theta} H_\X \to 1,
\end{align}
by which we mean that the ensemble map $p$ is a principal $H_\A$-bundle over its image, and the image coincides with the fiber $\theta^{-1}(1)$ of the map $\theta: \X_\sfs \to H_\X$ induced by the inclusion $K \to N_\uf^{(v)}$.
By taking the character lattice $X^*$, we recover the exact sequence of \eqref{eq:ensemble_map} and by taking the cocharacter lattice $X_*$, we get its dual:
\begin{align*}
X^* \eqref{eq:cluster_exact_sequence}&: \qquad 
0 \ot (K^\vee)^* \ot M^{(v)} \xleftarrow{p^*} N^{(v)}_\uf \ot K \ot 0,\\
X_* \eqref{eq:cluster_exact_sequence}&: \qquad
0 \to K^\vee \to N^{(v)} \xrightarrow{(p^*)^\tr} M^{(v)}_\uf \to K^* \to 0.
\end{align*}
The sub-scheme $\cU^\uf_\sfs:=\theta^{-1}(1)=p(\A_\sfs)$ is a symplectic leaf for the canonical Poisson structure determined by the exchange matrices. The cluster modular group also acts on the tori $H_\X$, $H_\A$ by monomial automorphisms, making the cluster exact sequence \eqref{eq:cluster_exact_sequence} $\Gamma_\sfs$-equivariant. 

An explicit description of the action of $H_\A$ on $\A_\sfs$ is as follows (\cite[Section 2.3]{FG09}). On the level of cocharacter lattices, it is just given by the additive action $K^\vee \otimes N^{(v)} \to N^{(v)}$.
Given an element
$\beta=\sum_i \beta_i^{(v)}e_i^{(v)} \in K^\vee$, let $\bG_m \to H_\A$, $t \mapsto \beta(t)$ be the corresponding one-parameter subgroup. Then the action $\beta(t):\A_{(v)} \to \A_{(v)}$ is written in terms of the cluster coordinates as
\begin{align}\label{eq:action_flow_form}
    \beta(t)^* A_i^{(v)} = t^{\beta_i^{(v)}} A_i^{(v)}, \quad t \in \bG_m.
\end{align}

\bigskip
\paragraph{\textbf{Tropicalizations.}}
The positive structures on the cluster varieties allow us to consider their semifield-valued points. For $\bA=\bZ,\bQ$ or $\bR$, let $\bA^T:=(\bA,\max,+)$ denote the corresponding \emph{tropical semifield} (or the \emph{max-plus semifield}). For an algebraic torus $H$, let $H(\bA^T):=X_\ast(H)\otimes_\bZ (\bA,+)$. A positive rational map $f:H \to H'$ between algebraic tori naturally induces a piecewise-linear (PL for short) map $f^T: H(\bA^T) \to H'(\bA^T)$. We call $f^T$ the \emph{tropicalized map}. 
In particular we have the tropicalized cluster transformations $\mu_k^T:\cZ_{(v)}(\bA^T) \to \cZ_{(v')}(\bA^T)$ for $(\mathsf{z},\cZ) \in \{(\sfa,\A),(\sfx,\X)\}$
, explicitly given as:
\begin{align}
    (\mu_k^T)^* \sfx_i^{(v')}&= \begin{cases}
    -\sfx_k^{(v)} & \mbox{if $i=k$},\\
    \sfx_i^{(v)} - \ve_{ik}^{(v)}[ -\sgn(\ve_{ik}^{(v)})\sfx_k^{(v)}]_+ & \mbox{if $i\neq k$}, 
    \end{cases} \label{eq:tropical x-transf}\\
    (\mu_k^T)^* \sfa_i^{(v')}&= \begin{cases}
    -\sfa_k^{(v)} + \max\left\{\sum_{j \in I} [\ve_{kj}^{(v)}]_+\sfa_j^{(v)},\sum_{j \in I} [-\ve_{kj}^{(v)}]_+\sfa_j^{(v)}\right\}  & \mbox{if $i=k$},\\
    \sfa_i^{(v)} & \mbox{if $i\neq k$}.
    \end{cases} \label{eq:tropical a-transf}
\end{align}
Here $\sfx_i^{(v)}$ and $\sfa_i^{(v)}$ are the coordinate functions induced by the basis vectors $e_i^{(v)}$ and $f_i^{(v)}$ respectively, and $[u]_+:=\max\{0,u\}$ for $u \in \bA$. 
We can use them to define the \emph{tropical cluster varieties}
\begin{align*}
    \X_\sfs(\bA^T) := \bigcup_{v \in \bExch_\sfs} \X_{(v)}(\bA^T), \quad \A_\sfs(\bA^T) := \bigcup_{v \in \bExch_\sfs} \A_{(v)}(\bA^T),
\end{align*}
which are naturally equipped with canonical PL structures. Since the PL maps are equivariant for the scaling action of $\bA_{>0}$, the tropical cluster varieties inherit this $\bA_{>0}$-action. We also have the tropical $\X$-varieties $\X^\uf_\sfs(\bA^T)$ without frozen coordinates. 

Since the above mentioned structures only involve positive rational maps, they are naturally inherited by the tropical cluster varieties. Thus we have:
\begin{itemize}
    \item the cluster exact sequence 
    \begin{align}\label{eq:cluster_exact_seq_general}
        0 \to H_\A(\bA^T) \to \A_\sfs(\bA^T) \xrightarrow{p} \X^\uf_\sfs(\bA^T) \xrightarrow{\theta} H_\X(\bA^T) \to 0
    \end{align}
    in the PL category, and
    \item the actions of $\Gamma_\sfs$ on $\A_\sfs(\bA^T)$ and $\X_\sfs(\bA^T)$ by PL automorphisms and on $H_\A(\bA^T)$ and $H_\X(\bA^T)$ by linear automorphisms, which make the cluster exact sequence $\Gamma_\sfs$-equivariant. Moreover, these actions commute with the rescaling action of $\bA_{>0}$.
\end{itemize}


\subsection{The cluster ensemble associated with the pair \texorpdfstring{$(\fsl_3,\Sigma)$}{(sl(3),S)}}\label{subsec:cluster_sl3}
Here we quickly recall the cluster structures on the moduli spaces $\A_{SL_3,\Sigma}$ and $\X_{PGL_3,\Sigma}$ constructed in \cite{FG03}. We are going to recall the \emph{Fock--Goncharov atlas} associated with ideal triangulations of $\Sigma$ and their mutation-equivalences, since it is typically difficult to describe the entire cluster structure.

Let $\tri$ be an ideal triangulation of $\Sigma$. Then we construct a quiver $Q_\tri$ with the vertex set $I(\tri)$ by drawing the quiver
\begin{align*}
\begin{tikzpicture}[scale=0.7]
\draw[blue] (210:3) -- (-30:3) -- (90:3) --cycle;
\foreach \x in {-30,90,210}
\path(\x:3) node [fill, circle, inner sep=1.2pt]{};
\begin{scope}[color=mygreen,>=latex]
\quiverplus{210:3}{-30:3}{90:3};
\qdlarrow{x122}{x121}
\qdlarrow{x232}{x231}
\qdlarrow{x312}{x311}
\end{scope}
\end{tikzpicture}
\end{align*}
on each triangle, and glue them by the \emph{amalgamation} construction \cite{FG06a}. In our case, this just means that we glue the quivers on adjacent triangles by identifying the two vertices on the shared edge and eliminating the pair of opposite dashed arrows. The vertices on the boundary intervals of $\Sigma$ are declared to be frozen, forming the subset $I_\f(\tri) \subset I(\tri)$ as in \cref{subsec:notation_marked_surface}. Let $\ve^\tri=(\ve_{ij}^\tri)_{i,j \in I(\tri)}$ be the corresponding exchange matrix. 

These quivers $Q^\tri$ (or the exchange matrices $\ve^\tri$) associated with ideal triangulations of $\Sigma$ are mutation-equivalent to each other. Indeed, the quivers $Q^\tri$, $Q^{\tri'}$ associated with two triangulations $\tri$, $\tri'$ connected by a single flip $f_E:\tri \to \tri'$ are transformed to each other via one of the mutation sequences shown in \cref{fig:flip sequence}. Then the assertion follows from the classical fact that any two ideal triangulations of the same marked surface can be transformed to each other by a finite sequence of flips. 

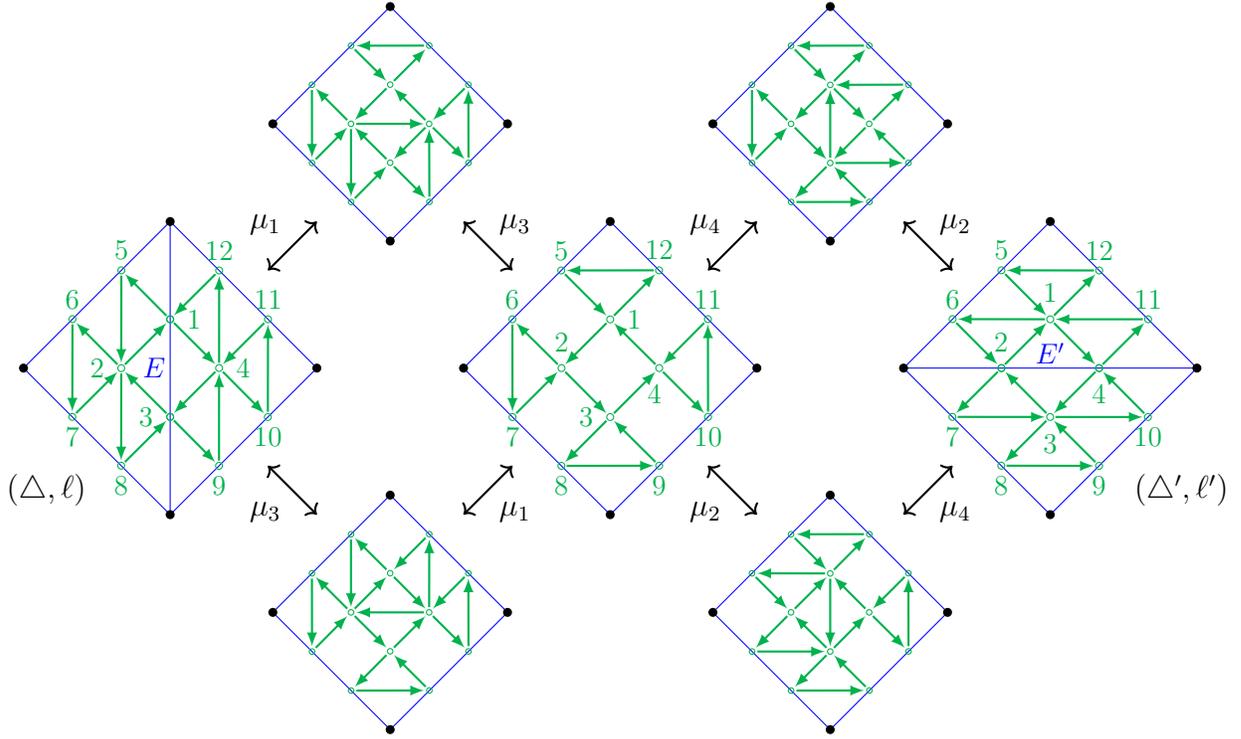
\begin{figure}
\centering
\hspace{-5mm}
\begin{tikzpicture}[scale=0.65]
{\color{blue}
\draw (3,0) -- (0,3) -- (-3,0) -- (0,-3) --cycle;
\draw[blue] (0,-3) --node[midway,left=-0.2em]{\scalebox{0.9}{$E$}} (0,3);
}
\draw (-1.5,-2.5) node[anchor=east]{$(\tri,\ell)$};
\foreach \x in {0,90,180,270}
\path(\x:3) node [fill, circle, inner sep=1.2pt]{};
\quiverplus{3,0}{0,3}{0,-3}
\draw[mygreen](x231) node[right=0.2em]{\scalebox{0.9}{$1$}};
\draw[mygreen](G) node[right=0.2em]{\scalebox{0.9}{$4$}};
\draw[mygreen](x232) node[left=0.2em]{\scalebox{0.9}{$3$}};
\draw[mygreen](x311) node[below]{\scalebox{0.9}{$9$}};
\draw[mygreen](x312) node[below]{\scalebox{0.9}{$10$}};
\draw[mygreen](x121) node[above]{\scalebox{0.9}{$11$}};
\draw[mygreen](x122) node[above]{\scalebox{0.9}{$12$}};
\quiverplus{-3,0}{0,-3}{0,3}
\draw[mygreen](G) node[left=0.2em]{\scalebox{0.9}{$2$}};
\draw[mygreen](x311) node[above]{\scalebox{0.9}{$5$}};
\draw[mygreen](x312) node[above]{\scalebox{0.9}{$6$}};
\draw[mygreen](x121) node[below]{\scalebox{0.9}{$7$}};
\draw[mygreen](x122) node[below]{\scalebox{0.9}{$8$}};

\begin{scope}[xshift=4.5cm,yshift=-5cm,scale=0.8,>=latex,yscale=-1]
\draw[blue] (3,0) -- (0,3) -- (-3,0) -- (0,-3) --cycle;
\foreach \x in {0,90,180,270}
\path(\x:3) node [fill, circle, inner sep=1.2pt]{};
{\color{mygreen}
\quiversquare{-3,0}{0,-3}{3,0}{0,3}
    \qarrow{x241}{x122}
				\qarrow{x122}{x131}
				\qarrow{x131}{x121}
				\qarrow{x121}{x412}
				\qarrow{x412}{x131}
				\qarrow{x131}{x241}
				\qarrow{x241}{x132}
				\qarrow{x132}{x341}
				\qarrow{x341}{x232}
				\qarrow{x232}{x132}
				\qarrow{x132}{x231}
				\qarrow{x231}{x241}
				
				\qarrow{x342}{x242}
				\qarrow{x242}{x411}
				\qarrow{x411}{x342}
				\qarrow{x131}{x242}
				\qarrow{x242}{x132}
				\qarrow{x132}{x131}
}

\end{scope}

\begin{scope}[xshift=4.5cm,yshift=5cm,scale=0.8,>=latex,yscale=-1]
\draw[blue] (3,0) -- (0,3) -- (-3,0) -- (0,-3) --cycle;
\foreach \x in {0,90,180,270}
\path(\x:3) node [fill, circle, inner sep=1.2pt]{};
{\color{mygreen}
\quiversquare{-3,0}{0,-3}{3,0}{0,3}
    \qarrow{x242}{x342}
				\qarrow{x342}{x132}
				\qarrow{x132}{x341}
				\qarrow{x341}{x232}
				\qarrow{x232}{x132}
				\qarrow{x132}{x242}
				\qarrow{x242}{x131}
				\qarrow{x131}{x121}
				\qarrow{x121}{x412}
				\qarrow{x412}{x131}
				\qarrow{x131}{x411}
				\qarrow{x411}{x242}
				
				\qarrow{x132}{x241}
				\qarrow{x241}{x131}
				\qarrow{x131}{x132}
				\qarrow{x122}{x241}
				\qarrow{x241}{x231}
				\qarrow{x231}{x122}
}
\end{scope}

\begin{scope}[xshift=9cm,>=latex,xscale=-1]
\draw[blue] (3,0) -- (0,3) -- (-3,0) -- (0,-3) --cycle;
\foreach \x in {0,90,180,270}
\path(\x:3) node [fill, circle, inner sep=1.2pt]{};
{\color{mygreen}
\quiversquare{-3,0}{0,-3}{3,0}{0,3}
    \qarrow{x122}{x241}
				\qarrow{x131}{x121}
				\qarrow{x121}{x412}
				\qarrow{x412}{x131}
				\qarrow{x241}{x131}
				\qarrow{x132}{x241}
				\qarrow{x132}{x341}
				\qarrow{x341}{x232}
				\qarrow{x232}{x132}
				\qarrow{x241}{x231}
				\qarrow{x231}{x122}
				
				\qarrow{x342}{x242}
				\qarrow{x242}{x411}
				\qarrow{x411}{x342}
				\qarrow{x131}{x242}
				\qarrow{x242}{x132}
}
\draw[mygreen](x242) node[right=0.2em]{\scalebox{0.9}{$1$
}};
\draw[mygreen](x241) node[left=0.2em]{\scalebox{0.9}{$3$}};
\draw[mygreen](x131) node[below=0.2em]{\scalebox{0.9}{$4$
}};
\draw[mygreen](x132) node[above=0.2em]{\scalebox{0.9}{$2$}};
\draw[mygreen](x342) node[above]{\scalebox{0.9}{$5$}};
\draw[mygreen](x341) node[above]{\scalebox{0.9}{$6$}};
\draw[mygreen](x232) node[below]{\scalebox{0.9}{$7$}};
\draw[mygreen](x231) node[below]{\scalebox{0.9}{$8$}};
\draw[mygreen](x122) node[below]{\scalebox{0.9}{$9$}};
\draw[mygreen](x121) node[below]{\scalebox{0.9}{$10$}};
\draw[mygreen](x412) node[above]{\scalebox{0.9}{$11$}};
\draw[mygreen](x411) node[above]{\scalebox{0.9}{$12$}};

\end{scope}

\begin{scope}[xshift=13.5cm,yshift=5cm,scale=0.8,xscale=-1,rotate=90,>=latex]
\draw[blue] (3,0) -- (0,3) -- (-3,0) -- (0,-3) --cycle;
\foreach \x in {0,90,180,270}
\path(\x:3) node [fill, circle, inner sep=1.2pt]{};
{\color{mygreen}
\quiversquare{-3,0}{0,-3}{3,0}{0,3}
    \qarrow{x242}{x342}
				\qarrow{x342}{x132}
				\qarrow{x132}{x341}
				\qarrow{x341}{x232}
				\qarrow{x232}{x132}
				\qarrow{x132}{x242}
				\qarrow{x242}{x131}
				\qarrow{x131}{x121}
				\qarrow{x121}{x412}
				\qarrow{x412}{x131}
				\qarrow{x131}{x411}
				\qarrow{x411}{x242}
				
				\qarrow{x132}{x241}
				\qarrow{x241}{x131}
				\qarrow{x131}{x132}
				\qarrow{x122}{x241}
				\qarrow{x241}{x231}
				\qarrow{x231}{x122}
}
\end{scope}

\begin{scope}[xshift=13.5cm,yshift=-5cm,scale=0.8,yscale=-1,rotate=-90,>=latex]
\draw[blue] (3,0) -- (0,3) -- (-3,0) -- (0,-3) --cycle;
\foreach \x in {0,90,180,270}
\path(\x:3) node [fill, circle, inner sep=1.2pt]{};
{\color{mygreen}
\quiversquare{-3,0}{0,-3}{3,0}{0,3}
    \qarrow{x241}{x122}
				\qarrow{x122}{x131}
				\qarrow{x131}{x121}
				\qarrow{x121}{x412}
				\qarrow{x412}{x131}
				\qarrow{x131}{x241}
				\qarrow{x241}{x132}
				\qarrow{x132}{x341}
				\qarrow{x341}{x232}
				\qarrow{x232}{x132}
				\qarrow{x132}{x231}
				\qarrow{x231}{x241}
				
				\qarrow{x342}{x242}
				\qarrow{x242}{x411}
				\qarrow{x411}{x342}
				\qarrow{x131}{x242}
				\qarrow{x242}{x132}
				\qarrow{x132}{x131}
}
\end{scope}

\begin{scope}[xshift=18cm]
{\color{blue}
\draw (3,0) -- (0,3) -- (-3,0) -- (0,-3) --cycle;
\draw (3,0) --node[midway,above=-0.2em]{\scalebox{0.9}{$E'$}} (-3,0);
}
\foreach \x in {0,90,180,270}
\path(\x:3) node [fill, circle, inner sep=1.2pt]{};
\quiverplus{-3,0}{3,0}{0,3}
\draw[mygreen](G) node[above=0.2em]{\scalebox{0.9}{$1$}};
\draw[mygreen](x121) node[above=0.2em]{\scalebox{0.9}{$2$}};
\draw[mygreen](x122) node[below=0.2em]{\scalebox{0.9}{$4$}};
\draw[mygreen](x311) node[above]{\scalebox{0.9}{$5$}};
\draw[mygreen](x312) node[above]{\scalebox{0.9}{$6$}};
\draw[mygreen](x231) node[above]{\scalebox{0.9}{$11$}};
\draw[mygreen](x232) node[above]{\scalebox{0.9}{$12$}};
\quiverplus{-3,0}{0,-3}{3,0}
\draw[mygreen](G) node[below=0.2em]{\scalebox{0.9}{$3$}};
\draw[mygreen](x231) node[below]{\scalebox{0.9}{$9$}};
\draw[mygreen](x232) node[below]{\scalebox{0.9}{$10$}};
\draw[mygreen](x121) node[below]{\scalebox{0.9}{$7$}};
\draw[mygreen](x122) node[below]{\scalebox{0.9}{$8$}};
\draw (1.5,-2.5) node[anchor=west]{$(\tri',\ell')$};
\end{scope}

\draw[thick,<->] (2,2) --node[midway,above left]{$\mu_1$} (3,3);
\draw[thick,<->] (2,-2) --node[midway,below left]{$\mu_3$} (3,-3);
\draw[thick,<->] (6,3) --node[midway,above right]{$\mu_3$} (7,2);
\draw[thick,<->] (6,-3) --node[midway,below right]{$\mu_1$} (7,-2);
\draw[thick,<->] (16,2) --node[midway,above right]{$\mu_2$} (15,3);
\draw[thick,<->] (16,-2) --node[midway,below right]{$\mu_4$} (15,-3);
\draw[thick,<->] (12,3) --node[midway,above left]{$\mu_4$} (11,2);
\draw[thick,<->] (12,-3) --node[midway,below left]{$\mu_2$} (11,-2);
\end{tikzpicture}
    \caption{Some of the sequences of mutations that realize the flip $f_E: \tri \to \tri'$. Here we partially fix labelings $\ell,\ell'$ of vertices in $I(\tri)$, $I(\tri')$, respectively.}
    \label{fig:flip sequence}
\end{figure}

Then there exists a unique mutation class $\sfs(\fsl_3,\Sigma)$ containing the seeds associated with any ideal triangulations $\tri$. 
More precisely, a \emph{labeled $\fsl_3$-triangulation} $(\tri,\ell)$, namely an ideal triangulation $\tri$ together with a bijection $\ell:I(\tri) \to \{1,\dots,N\}$, give rise to vertices of the labeled exchange graph $\bExch_{\sfs(\fsl_3,\Sigma)}$. \cref{fig:flip sequence} describes a subgraph containing $(\tri,\ell)$ and $(\tri',\ell')$, where the labels $\ell$, $\ell'$ are consistently chosen. Let us simply denote the objects related to $\sfs(\fsl_3,\Sigma)$ by
\begin{align*}
    \A_{\fsl_3,\Sigma}:=\A_{\sfs(\fsl_3,\Sigma)},\quad \X_{\fsl_3,\Sigma}:=\X_{\sfs(\fsl_3,\Sigma)}, \quad \bExch_{\fsl_3,\Sigma}:=\bExch_{\sfs(\fsl_3,\Sigma)}, \quad 
    \Gamma_{\fsl_3,\Sigma}:=\Gamma_{\sfs(\fsl_3,\Sigma)},
\end{align*}
and so on.

\bigskip
\paragraph{\textbf{Cluster modular group.}}
Although the entire structure of the cluster modular group $\Gamma_{\fsl_3,\Sigma}$ is yet unknown, it is known to include the subgroup $(MC(\Sigma) \times \mathrm{Out}(SL_3)) \ltimes W(\fsl_3)^{\bP} \subset \Gamma_{\fsl_3,\Sigma}$ \cite{GS18}. Here $MC(\Sigma)$ denotes the mapping class group of the marked surface $\Sigma$, $\mathrm{Out}(SL_3))=\mathrm{Aut}(SL_3)/\mathrm{Inn}(SL_3)$ is the outer automorphism group of $SL_3$, and $W(\fsl_3)$ is the Weyl group of the Lie algebra $\fsl_3$. The group $\mathrm{Out}(SL_3)$ has order $2$, and generated by the \emph{Dynkin involution} $\ast: G \to G$, $g \mapsto (g^{-1})^\mathsf{T}$. 

Recall that the Weyl group $W(\fsl_3)$ is the group generated by two involutions $r_1,r_2$, subject to the braid relation $r_1 r_2 r_1 = r_2 r_1 r_2$. The inclusion of $W(\fsl_3)^{\bP}$ into the cluster modular group is given in \cite[Section 8]{GS18}. 
For simplicity, let us choose a labeled triangulation $(\tri,\ell)$ such that its local picture around $p$ is as shown in \cref{fig:puncture_quiver}. Let $r_s^{(p)}$ denote the action of each generator assigned at $p$ for $s=1,2$. Then the edge path $\gamma_s^{(p)}$ from $(\tri,\ell)$ to $r_s^{(p)}.(\tri,\ell)$ is described by the sequence
\begin{align}\label{eq:Weyl_action_cluster}
    &\mu_{\gamma_1^{(p)}}:=\mu_3 \mu_4 \mu_5 (5\ 6) \mu_5 \mu_4 \mu_3,\\
    &\mu_{\gamma_2^{(p)}}:=\mu_1 (1\ 2) \mu_1.
\end{align}
It turns out that the orders of mutations along the cycles $\{3,4,5,6\}$ and $\{1,2\}$ do not matter, and all define the same mutation loops \cite[Theorem 7.1]{GS18}. 

For each element $\phi$ in this subgroup, let us call the induced PL action $\phi:\cZ_{\fsl_3,\Sigma}(\bQ^T) \to \cZ_{\fsl_3,\Sigma}(\bQ^T)$ the \emph{cluster action}, in comparison to the geometric action we introduce in the body of this paper in terms of signed $\fsl_3$-webs. 

\bigskip
\paragraph{\textbf{Cluster exact sequence.}}
We are going to investigate the tori $H_\X$ and $H_\A$ for the cluster ensemble associated with $\sfs(\fsl_3,\Sigma)$. 

For each puncture $p \in \bP$, there exists an ideal triangulation $\tri(p)$ of $\Sigma$ such that the star neighborhood of $p$ is a punctured disk with two marked points on its boundary. See, for instance, \cite[Lemma 5.7 (2)]{IIO21}. The corresponding quiver is shown in \cref{fig:puncture_quiver}, and we fix a labeling $\ell(p):I(\tri(p)) \to \{1,\dots,N\}$ as partially shown there. Then it can be easily seen from the quiver that the vectors
\begin{align*}
\alpha_1^{(p)}&:=e_3^{(v)}+e_4^{(v)}+e_5^{(v)}+e_6^{(v)},\\
\alpha_2^{(p)}&:=e_1^{(v)}+e_2^{(v)}
\end{align*}
lie in $\ker p^* \subset N_\uf^{(v)}$, where $v=(\tri(p),\ell(p)) \in \bExch_{\fsl_3,\Sigma}$. The vectors $\alpha_s^{(p)}$ will behave as simple roots under the Weyl group actions (see below).

\begin{figure}[ht]
\begin{tikzpicture}[>=latex]
\draw[blue] (0,0) circle(2cm);
\draw[blue] (0,2) -- (0,-2);
\node[fill,inner sep=1.5pt] at (0,2) {};
\node[fill,inner sep=1.5pt] at (0,-2) {};
\filldraw[fill=white](0,0) circle(2pt);
{\color{mygreen}
\draw(0,0.667) circle(2pt) coordinate(A1) node[above right,scale=0.8]{$1$};
\draw(0,1.333) circle(2pt) coordinate(A2) node[above right,scale=0.8]{$3$};
\draw(0,-0.667) circle(2pt) coordinate(B1) node[above right=0.2em,scale=0.8]{$2$};
\draw(0,-1.333) circle(2pt) coordinate(B2) node[below right,scale=0.8]{$5$};
\draw(1.333,0) circle(2pt) coordinate(C) node[right,scale=0.8]{$4$};
\draw(-1.333,0) circle(2pt) coordinate(D) node[left,scale=0.8]{$6$};
\draw(45:2) circle(2pt) coordinate(E1);
\draw(-45:2) circle(2pt) coordinate(E2);
\draw(135:2) circle(2pt) coordinate(F1);
\draw(-135:2) circle(2pt) coordinate(F2);
\qarrow{A2}{C}
\qarrow{C}{B2}
\qarrow{B2}{D}
\qarrow{D}{A2}
\qarrow{B1}{C}
\qarrow{C}{A1}
\qarrow{A1}{D}
\qarrow{D}{B1}
\qarrow{C}{E1}
\qarrow{E1}{A2}
\qarrow{B2}{E2}
\qarrow{E2}{C}
\qarrow{A2}{F1}
\qarrow{F1}{D}
\qarrow{D}{F2}
\qarrow{F2}{B2}
}
\end{tikzpicture}
    \caption{The quiver $Q^{\tri(p)}$ around a puncture $p$. Here the possible (half-)arrows between boundary vertices are omitted.}
    \label{fig:puncture_quiver}
\end{figure}

\begin{lem}[cf.~{\cite[Theorem 2.10 (2)]{GS19}}]\label{lem:tropical_torus_X}
The set $\{\alpha_s^{(p)} \mid p \in \bP,~s=1,2\}$ is a $\bQ$-basis of the vector space $X^\ast(H_\X)_\bQ=(\ker p^*)_\bQ$. In particular, we have $H_\X(\bQ^T) \cong \bigoplus_{p \in \bP}\mathsf{P}_\bQ^\vee$ by identifying the dual basis of $\alpha_s^{(p)}$ with the fundamental coweight $\varpi^\vee_s \in \mathsf{P}^\vee$.  
\end{lem}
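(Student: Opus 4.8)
The plan is to establish the three claims packaged into the statement: that each $\alpha_s^{(p)}$ lies in $\ker p^\ast$, that the $2|\bP|$ vectors $\{\alpha_s^{(p)}\}_{p,s}$ are linearly independent, and that $\dim_\bQ(\ker p^\ast)_\bQ = 2|\bP|$; the identification with $\mathsf{Q}^{\bP}$ and the dual statement then follow formally. First I would record the membership $\alpha_s^{(p)} \in \ker p^\ast$. By \eqref{eq:ensemble_map} the map $p^\ast$ sends $e_i^{(v)}$ to the $i$-th row $\sum_j \ve_{ij}^{(v)} f_j^{(v)}$ of the exchange matrix, so $\alpha_s^{(p)} \in \ker p^\ast$ amounts to the vanishing of the corresponding sum of rows. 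Reading the arrows incident to the vertices $1,\dots,6$ off the quiver $Q^{\tri(p)}$ in \cref{fig:puncture_quiver}, one checks directly that $\ve_{3j}+\ve_{4j}+\ve_{5j}+\ve_{6j}=0$ and $\ve_{1j}+\ve_{2j}=0$ for every $j \in I(\tri(p))$; this is the routine verification already announced before the statement.

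For linear independence I would pass to the lamination side. By the formula of \cref{prop:Casimir}, in the shear chart $\tri(p)$ the composite $\langle \alpha_s, \theta_p \rangle$ is precisely the linear functional $\sum_i (\alpha_s^{(p)})_i \sfx_i$ determined by $\alpha_s^{(p)}$. Since the Casimir coordinates of $H_\X$ transform monomially under mutations (equivalently, the canonical identifications $\ker p^\ast_{(v)} \cong \ker p^\ast_{(v')}$ are linear), each $\langle \alpha_s, \theta_p \rangle$ is an intrinsic linear functional on $\cL^x(\Sigma,\bQ)$ lying in $\ker p^\ast$, and the collection $\{\langle \alpha_s, \theta_p\rangle\}_{p,s}$ is exactly the set of components of the single map $\theta = (\theta_p)_p \colon \cL^x(\Sigma,\bQ) \to (\mathsf{P}^\vee_\bQ)^{\bP}$ in the basis dual to the simple roots. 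By \cref{prop:exact_seq_lamination} this $\theta$ is surjective onto a space of dimension $2|\bP|$; hence the $2|\bP|$ functionals $\langle \alpha_s,\theta_p\rangle$ are linearly independent, and therefore so are the $\alpha_s^{(p)}$ as elements of $\ker p^\ast$. In particular $\dim_\bQ(\ker p^\ast)_\bQ \geq 2|\bP|$.

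The remaining and genuinely substantial point is the reverse inequality $\dim_\bQ(\ker p^\ast)_\bQ \leq 2|\bP|$, i.e.\ that there are no Casimir directions beyond those coming from the punctures; this is the main obstacle, since it is exactly the computation of the corank of the surface quiver $Q^\tri$. I would obtain it by citing the structure of $H_\X$ for $\X_{PGL_3,\Sigma}$ from \cite[Theorem 2.10 (2)]{GS19}, which gives $\dim_\bQ H_\X(\bQ^T) = 2|\bP|$ directly. Alternatively, to keep the argument internal to the paper, one can combine the companion computation $\dim_\bQ H_\A(\bQ^T) = 2|\bM|$ of \cref{lem:tropical_torus_A} with the rank--nullity identity $\dim \ker p^\ast - \dim \coker p^\ast = |I_\uf(\tri)| - |I(\tri)| = -2|\bM_\partial|$ read off from \eqref{eq:ensemble_map}, together with $|\bM| = |\bP| + |\bM_\partial|$, to deduce $\dim_\bQ H_\X(\bQ^T) = \dim_\bQ H_\A(\bQ^T) - 2|\bM_\partial| = 2|\bP|$.

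Finally, with $\dim_\bQ(\ker p^\ast)_\bQ = 2|\bP|$ in hand, the $2|\bP|$ independent vectors $\{\alpha_s^{(p)}\}$ form a $\bQ$-basis. Sending $\alpha_s \mapsto \alpha_s^{(p)}$ on the $p$-th factor gives an isomorphism $\mathsf{Q}_\bQ^{\bP} \xrightarrow{\sim} X^\ast(H_\X)_\bQ = (\ker p^\ast)_\bQ$, and dualizing---using $\mathsf{P}^\vee = \Hom(\mathsf{Q},\bZ)$ and $\langle \alpha_s, \varpi_t^\vee\rangle = \delta_{st}$ from \cref{subsec:Lie_theory}---identifies the dual basis of $\{\alpha_s^{(p)}\}$ with the fundamental coweights $\varpi_s^\vee$ in each factor, whence $H_\X(\bQ^T) \cong \bigoplus_{p \in \bP} \mathsf{P}^\vee_\bQ$.
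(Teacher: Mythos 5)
Your proof is correct, and it is considerably more self-contained than the paper's, which disposes of the lemma in one line by observing that kernel membership is read off the quiver and then ``counting the dimensions of the relevant moduli spaces'' with a citation to \cite[(2.10)]{FG03} (the ``cf.~\cite{GS19}'' in the header). You differ in two places. First, you supply an explicit linear-independence argument by transporting the vectors to the lamination side: by \cref{prop:Casimir} the pairings $\langle \alpha_s,\theta_p\rangle$ are exactly the components of the tropicalized Casimir map $\theta$ in the basis $(\varpi_1^\vee,\varpi_2^\vee)$, and surjectivity of $\theta$ is part of \cref{prop:exact_seq_lamination}; since both of those results are established earlier and without reference to this lemma, there is no circularity, and this fills a step the paper's dimension count leaves implicit. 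Second, for the upper bound $\dim(\ker p^*)_\bQ\le 2|\bP|$ you offer, besides the same external citation, an internal derivation from \cref{lem:tropical_torus_A} via rank--nullity, $\dim\ker p^*-\dim\coker p^*=|I_\uf(\tri)|-|I(\tri)|=-2|\bM_\partial|$, which is arithmetically correct; the only caveat is that \cref{lem:tropical_torus_A} is itself justified in the paper by the same kind of moduli-space dimension count, so this option relocates rather than eliminates the external input. The final dualization step agrees with the paper.
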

Indeed, the statement is verified by counting the dimensions of the relevant moduli spaces: see \cite[(2.10)]{FG03}. 
The torus $H_\X$ covers the product ${H'}^{\bP}$ of copies of the Cartan subgroup $H' \subset PGL_3$. Via the birational isomorphism $\X^\uf_{\fsl_3,\Sigma} \cong \X_{PGL_3,\Sigma}$ provided by the cluster structure, each Cartan subgroup parametrizes the semisimple part of the monodromy around a puncture.

The torus $H_\A$ is described as follows. For each puncture $p$, choose $v=(\tri(p),\ell(p))$ as above and consider 
\begin{align*}
(\alpha_1^\vee)^{(p)}&:=e_3^{(v)}+e_4^{(v)}+e_5^{(v)}+e_6^{(v)},\\
(\alpha_2^\vee)^{(p)}&:=e_1^{(v)}+e_2^{(v)},
\end{align*}
which are now viewed as vectors in $K^\vee=
(M^{(v)}/p^*(N_\uf^{(v)}))^\ast 
\subset N^{(v)}$. For each special point $m \in M_\partial$, choose an ideal triangulation $\tri(m)$ so that $m$ belongs to a unique triangle, together with a labeling $\ell(m)$ as shown in \cref{fig:special_quiver}. Then the vectors 
\begin{align*}
(\alpha_1^\vee)^{(m)}&:=e_3^{(v)}+e_4^{(v)}+e_5^{(v)},\\
(\alpha_2^\vee)^{(m)}&:=e_1^{(v)}+e_2^{(v)}
\end{align*}
lie in $K^\vee$, where $v=(\tri(m),\ell(m))$. The vectors $(\alpha_s^\vee)^{(m)}$ are viewed as simple coroots.

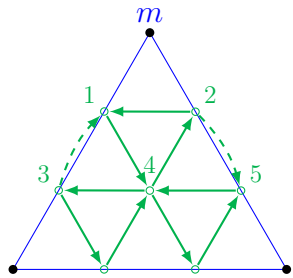
\begin{figure}[ht]
\begin{tikzpicture}[scale=0.7]
\draw[blue] (210:3) -- (-30:3) -- (90:3) node[above]{$m$} --cycle;
\foreach \x in {-30,90,210}
\path(\x:3) node [fill, circle, inner sep=1.2pt]{};
\begin{scope}[color=mygreen,>=latex]
\quiverplus{210:3}{-30:3}{90:3};
\node[above left,scale=0.8] at (x311) {$1$};
\node[above left,scale=0.8] at (x312) {$3$};
\node[above right,scale=0.8] at (x232) {$2$};
\node[above right,scale=0.8] at (x231) {$5$};
\node[above=0.3em,scale=0.8] at (G) {$4$};
\qdlarrow{x232}{x231}
\qdlarrow{x312}{x311}
\end{scope}
\end{tikzpicture}
    \caption{The quiver $Q^{\tri(m)}$ around a special point $m$. Here possible (half-)arrows on the bottom edge are omitted.}
    \label{fig:special_quiver}
\end{figure}

\begin{lem}\label{lem:tropical_torus_A}
The set $\{(\alpha_s^\vee)^{(m)} \mid m \in \bM,~s=1,2\}$ is a $\bQ$-basis of the vector space $X_\ast(H_\A)_\bQ=K^\vee_\bQ$. In particular, we have $H_\A(\bQ^T) \cong \bigoplus_{p \in \bP}\mathsf{Q}^\vee_\bQ$ by identifying the basis vector $(\alpha_s^\vee)^{(p)}$ with the simple coroot $\alpha_s^\vee \in \mathsf{Q}^\vee$.  
\end{lem}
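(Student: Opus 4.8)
The plan is to follow the same strategy as for \cref{lem:tropical_torus_X}: first verify that each proposed vector lies in $K^\vee_\bQ$, then pin down $\dim_\bQ K^\vee_\bQ$, and finally check that the $2|\bM|$ proposed vectors are linearly independent, so that they form a $\bQ$-basis. First I would verify membership. Recall that $K^\vee_\bQ=(\coker p^\ast_{(v)})^\ast_\bQ$ is identified with the annihilator $\{n\in N^{(v)}_\bQ : \sum_j \ve^{(v)}_{ij}n_j=0 \text{ for all } i\in I_\uf\}$ of $\im p^\ast_{(v)}$ inside $N^{(v)}_\bQ$. For a puncture $p$ with $v=(\tri(p),\ell(p))$ as in \cref{fig:puncture_quiver}, and for a special point $m$ with $v=(\tri(m),\ell(m))$ as in \cref{fig:special_quiver}, this amounts to checking, for each unfrozen vertex $i$, that the entries $\ve^{(v)}_{ij}$ summed over the support $\{3,4,5,6\}$ (resp. $\{3,4,5\}$) or $\{1,2\}$ of $(\alpha^\vee_s)^{(m)}$ vanish; this is the direct inspection of the two quivers asserted just before the statement.

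Next comes the dimension count. From the linear map defining the ensemble map we have $\dim_\bQ\coker p^\ast_{(v)}-\dim_\bQ\ker p^\ast_{(v)}=|I(\tri)|-|I_\uf(\tri)|=|I_\f(\tri)|$, since $p^\ast_{(v)}$ has domain of dimension $|I_\uf(\tri)|$ and codomain of dimension $|I(\tri)|$. By \cref{lem:tropical_torus_X} we know $\dim_\bQ\ker p^\ast_{(v)}=\dim_\bQ K_\bQ=2|\bP|$, while $|I_\f(\tri)|=2|\bM_\partial|$ from the cardinalities recorded in \cref{subsec:notation_marked_surface} (equivalently, two frozen vertices sit on each boundary interval, and the number of boundary intervals equals $|\bM_\partial|$). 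Hence $\dim_\bQ K^\vee_\bQ=\dim_\bQ\coker p^\ast_{(v)}=2|\bP|+2|\bM_\partial|=2|\bM|$, which is exactly the number of proposed vectors $\{(\alpha^\vee_s)^{(m)}\}_{m\in\bM,\,s=1,2}$.

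Finally, linear independence, which I expect to be the main obstacle. The difficulty is that each $(\alpha^\vee_s)^{(m)}$ is defined in its own adapted seed $(\tri(m),\ell(m))$ and the vectors are compared only after transport along mutations, so one cannot simply read off independence from disjoint supports in a single seed. For a fixed $m$ the two vectors $(\alpha^\vee_1)^{(m)},(\alpha^\vee_2)^{(m)}$ do have disjoint supports in their own seed and are thus independent; the issue is separating contributions from distinct marked points. I would resolve this exactly as in the $\X$-case, via the moduli interpretation: the torus $H_\A=\bT_{(K^\vee)^\ast}$ is (a finite cover of) the product $H^{\bM}$ of Cartan subgroups of $SL_3$ acting on $\A_{SL_3,\Sigma}$ by rescaling the decoration at each marked point, and these rescalings at distinct marked points are independent. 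Under this splitting the one-parameter subgroup determined by $(\alpha^\vee_s)^{(m)}$ through \eqref{eq:action_flow_form} is precisely the simple coroot direction $\alpha^\vee_s\in X_\ast(H)=\mathsf{Q}^\vee$ of the factor at $m$; this simultaneously furnishes the linear independence and identifies $H_\A(\bQ^T)\cong\bigoplus_{m\in\bM}\mathsf{Q}^\vee_\bQ$ by sending $(\alpha^\vee_s)^{(m)}$ to the simple coroot $\alpha^\vee_s$ at $m$. Together with the dimension count, the $2|\bM|$ vectors then form a $\bQ$-basis.

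Alternatively, independence can in principle be extracted purely combinatorially by pairing a hypothetical vanishing relation against the seed-independent pairing $\coker p^\ast\otimes K^\vee\to\bQ$ using functionals adapted to each $m$ in its own seed; but confirming that the resulting cross-terms vanish is exactly the delicate transport-across-mutations computation that the moduli picture circumvents, so I would present the moduli argument as the primary route and keep the combinatorial splitting only as a cross-check of membership in \cref{fig:puncture_quiver} and \cref{fig:special_quiver}.
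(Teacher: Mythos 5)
Your proposal is correct and follows essentially the same route as the paper: the paper's own argument is just the terse version "verified by counting the dimension" together with the observation that $H_\A$ covers $H^{\bM}$ and acts by rescaling the decorations at the marked points, which is exactly the dimension count and moduli-theoretic independence argument you spell out. The extra details you supply (the annihilator description of $K^\vee$, the index computation $\dim\coker p^\ast-\dim\ker p^\ast=|I_\f(\tri)|=2|\bM_\partial|$, and the use of \cref{lem:tropical_torus_X} for $\dim K_\bQ=2|\bP|$) are all consistent with what the paper leaves implicit.
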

The statement is again verified by counting the dimension. The torus $H_\A$ covers the product $H^{\bM}$ of copies of the Cartan subgroup $H \subset SL_3$. Via the birational isomorphism $\A_{\fsl_3,\Sigma} \cong \A_{SL_3,\Sigma}$ provided by the cluster structure, the action of each Cartan subgroup rescales the decoration assigned at a marked point. 

The relations among the vector spaces are summarized as:
\begin{equation*}
    \begin{tikzcd}
      & \bigoplus_{m \in \bM} \mathsf{P}_\bQ & & \bigoplus_{p \in \bP} \mathsf{Q}_\bQ \\
    H_\A \ar[ur,mapsto,"X^\ast(\bullet)_\bQ"] \ar[dr,mapsto,"X_\ast(\bullet)_\bQ"']& & H_\X \ar[ur,mapsto,"X^\ast(\bullet)_\bQ"] \ar[dr,mapsto,"X_\ast(\bullet)_\bQ"']& \\
      & \bigoplus_{m \in \bM} \mathsf{Q}^\vee_\bQ & & \bigoplus_{p \in \bP} \mathsf{P}^\vee_\bQ.
    \end{tikzcd}
\end{equation*}
In particular, we get the cluster exact sequence of the form 
\begin{align}\label{eq:cluster_exact_seq_sl3}
    0 \to \bigoplus_{m \in \bM} \mathsf{Q}^\vee_\bQ \to \A_{\fsl_3,\Sigma}(\bQ^T) \xrightarrow{p} \X^\uf_{\fsl_3,\Sigma}(\bQ^T) \xrightarrow{\theta} \bigoplus_{p \in \bP} \mathsf{P}^\vee_\bQ \to 0.
\end{align}

\section{Lamination clusters along the mutation sequence representing $r_{p,s}$}
We take the initial cluster as in \cref{fig:puncture_quiver}. Recall that the mutation sequences representing the actions $r_{p,2}$ and $r_{p,1}$ are given by
\begin{align*}
    \mu_{\gamma_{p,2}}:= (1\ 2) \mu_2 \mu_1,\quad 
    \mu_{\gamma_{p,1}}:=(5\ 6)\mu_3 \mu_4 \mu_6 \mu_5 \mu_4 \mu_3,
\end{align*}
respectively. We show the lamination clusters along these mutation sequences in \cref{fig:seq_r2} and \cref{fig:seq_r1}, respectively. Observe that the final cluster is indeed obtained from the initial one by applying the corresponding geometric actions together with the transpositions $(1\ 2)$ and $(5\ 6)$, respectively.

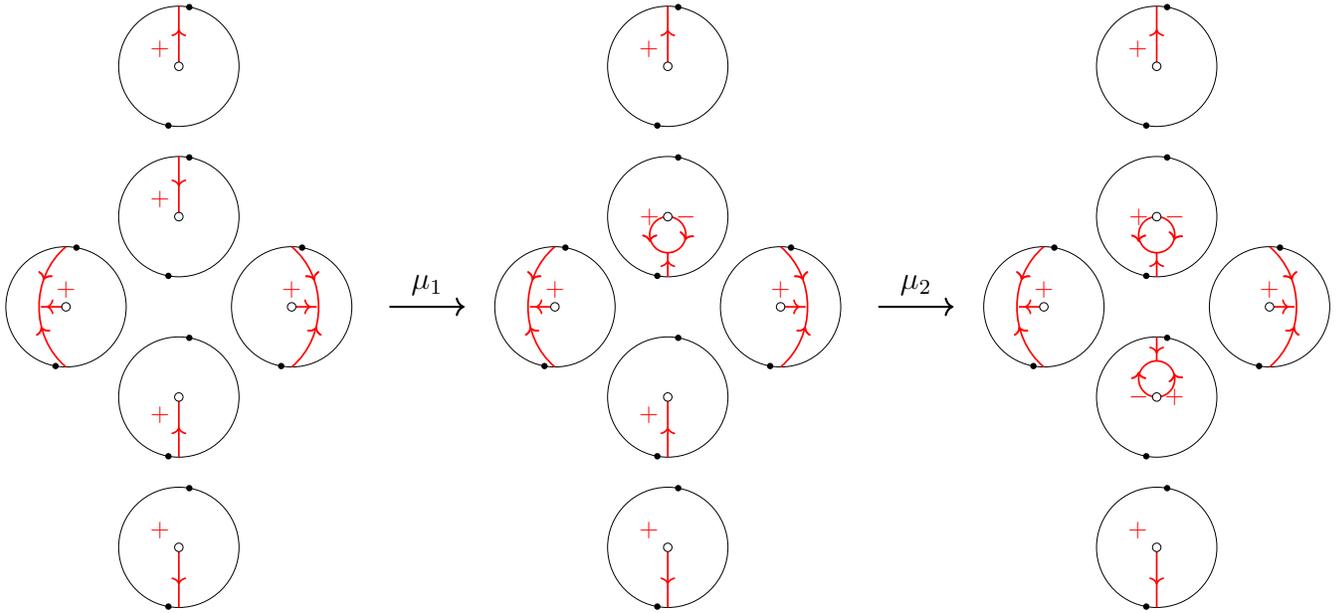
\begin{figure}[ht]
    \centering
\begin{tikzpicture}
\node[scale=0.8] at (0,3.2) {
$\tikz{\draw(0,0) circle(1cm);\foreach \i in {80,260} \fill(\i:1) circle(1.5pt);
\draw[red,thick,-<-](0,1)--(0,0) node[above left]{$+$};\filldraw[fill=white](0,0) circle(2pt);}$
};
\node[scale=0.8] at (0,1.2) {
$\tikz{\draw(0,0) circle(1cm);\foreach \i in {80,260} \fill(\i:1) circle(1.5pt);
\draw[red,thick,->-](0,1)--(0,0) node[above left]{$+$};\filldraw[fill=white](0,0) circle(2pt);}$
};
\node[scale=0.8] at (-1.5,0) {
$\tikz{\draw(0,0) circle(1cm);\foreach \i in {80,260} \fill(\i:1) circle(1.5pt);
\draw[red,thick,->-={0.3}{},-<-={0.7}{}] (0,1) to[bend right=50pt] node[pos=0.5,inner sep=0](A){} (0,-1);
\draw[red,thick,-<-](A)--(0,0) node[above]{$+$};
\filldraw[fill=white](0,0) circle(2pt);}$
};
\node[scale=0.8] at (1.5,0) {
$\tikz{\draw(0,0) circle(1cm);\foreach \i in {80,260} \fill(\i:1) circle(1.5pt);
\draw[red,thick,->-={0.3}{},-<-={0.7}{}] (0,1) to[bend left=50pt] node[pos=0.5,inner sep=0](A){} (0,-1);
\draw[red,thick,-<-](A)--(0,0) node[above]{$+$};
\filldraw[fill=white](0,0) circle(2pt);}$
};
\node[scale=0.8] at (0,-1.2) {
$\tikz{\draw(0,0) circle(1cm);\foreach \i in {80,260} \fill(\i:1) circle(1.5pt);
\draw[red,thick,->-](0,-1)--(0,0) node[below left]{$+$};\filldraw[fill=white](0,0) circle(2pt);}$
};
\node[scale=0.8] at (0,-3.2) {
$\tikz{\draw(0,0) circle(1cm);\foreach \i in {80,260} \fill(\i:1) circle(1.5pt);
\draw[red,thick,-<-](0,-1)--(0,0) node[above left]{$+$};\filldraw[fill=white](0,0) circle(2pt);}$
};
\draw[thick,->] (2.8,0) --node[midway,above]{$\mu_1$} ++(1,0);

\begin{scope}[xshift=6.5cm]
\node[scale=0.8] at (0,3.2) {
$\tikz{\draw(0,0) circle(1cm);\foreach \i in {80,260} \fill(\i:1) circle(1.5pt);
\draw[red,thick,-<-](0,1)--(0,0) node[above left]{$+$};\filldraw[fill=white](0,0) circle(2pt);}$
};
\node[scale=0.8] at (0,1.2) {
$\tikz{\draw(0,0) circle(1cm);\foreach \i in {80,260} \fill(\i:1) circle(1.5pt);
\draw[red,thick,->-={0.3}{},-<-={0.75}{}](0,0) arc(90:-270:0.3); \draw[red,thick,->-={0.7}{}] (0,-1) -- (0,-0.6);\node[red] at (-0.3,0) {$+$};\node[red] at (0.3,0) {$-$};\filldraw[fill=white](0,0) circle(2pt);}$
};
\node[scale=0.8] at (-1.5,0) {
$\tikz{\draw(0,0) circle(1cm);\foreach \i in {80,260} \fill(\i:1) circle(1.5pt);
\draw[red,thick,->-={0.3}{},-<-={0.7}{}] (0,1) to[bend right=50pt] node[pos=0.5,inner sep=0](A){} (0,-1);
\draw[red,thick,-<-](A)--(0,0) node[above]{$+$};
\filldraw[fill=white](0,0) circle(2pt);}$
};
\node[scale=0.8] at (1.5,0) {
$\tikz{\draw(0,0) circle(1cm);\foreach \i in {80,260} \fill(\i:1) circle(1.5pt);
\draw[red,thick,->-={0.3}{},-<-={0.7}{}] (0,1) to[bend left=50pt] node[pos=0.5,inner sep=0](A){} (0,-1);
\draw[red,thick,-<-](A)--(0,0) node[above]{$+$};
\filldraw[fill=white](0,0) circle(2pt);}$
};
\node[scale=0.8] at (0,-1.2) {
$\tikz{\draw(0,0) circle(1cm);\foreach \i in {80,260} \fill(\i:1) circle(1.5pt);
\draw[red,thick,->-](0,-1)--(0,0) node[below left]{$+$};\filldraw[fill=white](0,0) circle(2pt);}$
};
\node[scale=0.8] at (0,-3.2) {
$\tikz{\draw(0,0) circle(1cm);\foreach \i in {80,260} \fill(\i:1) circle(1.5pt);
\draw[red,thick,-<-](0,-1)--(0,0) node[above left]{$+$};\filldraw[fill=white](0,0) circle(2pt);}$
};
\draw[thick,->] (2.8,0) --node[midway,above]{$\mu_2$} ++(1,0);
\end{scope}

\begin{scope}[xshift=13cm]
\node[scale=0.8] at (0,3.2) {
$\tikz{\draw(0,0) circle(1cm);\foreach \i in {80,260} \fill(\i:1) circle(1.5pt);
\draw[red,thick,-<-](0,1)--(0,0) node[above left]{$+$};\filldraw[fill=white](0,0) circle(2pt);}$
};
\node[scale=0.8] at (0,1.2) {
$\tikz{\draw(0,0) circle(1cm);\foreach \i in {80,260} \fill(\i:1) circle(1.5pt);
\draw[red,thick,->-={0.3}{},-<-={0.75}{}](0,0) arc(90:-270:0.3); \draw[red,thick,->-={0.7}{}] (0,-1) -- (0,-0.6);\node[red] at (-0.3,0) {$+$};\node[red] at (0.3,0) {$-$};\filldraw[fill=white](0,0) circle(2pt);}$
};
\node[scale=0.8] at (-1.5,0) {
$\tikz{\draw(0,0) circle(1cm);\foreach \i in {80,260} \fill(\i:1) circle(1.5pt);
\draw[red,thick,->-={0.3}{},-<-={0.7}{}] (0,1) to[bend right=50pt] node[pos=0.5,inner sep=0](A){} (0,-1);
\draw[red,thick,-<-](A)--(0,0) node[above]{$+$};
\filldraw[fill=white](0,0) circle(2pt);}$
};
\node[scale=0.8] at (1.5,0) {
$\tikz{\draw(0,0) circle(1cm);\foreach \i in {80,260} \fill(\i:1) circle(1.5pt);
\draw[red,thick,->-={0.3}{},-<-={0.7}{}] (0,1) to[bend left=50pt] node[pos=0.5,inner sep=0](A){} (0,-1);
\draw[red,thick,-<-](A)--(0,0) node[above]{$+$};
\filldraw[fill=white](0,0) circle(2pt);}$
};
\node[scale=0.8] at (0,-1.2) {
$\tikz{\draw(0,0) circle(1cm);\foreach \i in {80,260} \fill(\i:1) circle(1.5pt);
\draw[red,thick,->-={0.3}{},-<-={0.75}{}](0,0) arc(-90:270:0.3); \draw[red,thick,->-={0.7}{}] (0,1) -- (0,0.6);\node[red] at (0.3,0) {$+$};\node[red] at (-0.3,0) {$-$};\filldraw[fill=white](0,0) circle(2pt);}$
};
\node[scale=0.8] at (0,-3.2) {
$\tikz{\draw(0,0) circle(1cm);\foreach \i in {80,260} \fill(\i:1) circle(1.5pt);
\draw[red,thick,-<-](0,-1)--(0,0) node[above left]{$+$};\filldraw[fill=white](0,0) circle(2pt);}$
};
\end{scope}
\end{tikzpicture}
    \caption{The sequence of lamination clusters along the mutation sequence for $r_{p,2}$.}
    \label{fig:seq_r2}
\end{figure}

\begin{figure}[htbp]
    \centering
\begin{tikzpicture}[scale=0.95]
\node[scale=0.8] at (0,3.2) {
$\tikz{\draw(0,0) circle(1cm);\foreach \i in {80,260} \fill(\i:1) circle(1.5pt);
\draw[red,thick,-<-](0,1)--(0,0) node[above left]{$+$};\filldraw[fill=white](0,0) circle(2pt);}$
};
\node[scale=0.8] at (0,1.2) {
$\tikz{\draw(0,0) circle(1cm);\foreach \i in {80,260} \fill(\i:1) circle(1.5pt);
\draw[red,thick,->-](0,1)--(0,0) node[above left]{$+$};\filldraw[fill=white](0,0) circle(2pt);}$
};
\node[scale=0.8] at (-1.5,0) {
$\tikz{\draw(0,0) circle(1cm);\foreach \i in {80,260} \fill(\i:1) circle(1.5pt);
\draw[red,thick,->-={0.3}{},-<-={0.7}{}] (0,1) to[bend right=50pt] node[pos=0.5,inner sep=0](A){} (0,-1);
\draw[red,thick,-<-](A)--(0,0) node[above]{$+$};
\filldraw[fill=white](0,0) circle(2pt);}$
};
\node[scale=0.8] at (1.5,0) {
$\tikz{\draw(0,0) circle(1cm);\foreach \i in {80,260} \fill(\i:1) circle(1.5pt);
\draw[red,thick,->-={0.3}{},-<-={0.7}{}] (0,1) to[bend left=50pt] node[pos=0.5,inner sep=0](A){} (0,-1);
\draw[red,thick,-<-](A)--(0,0) node[above]{$+$};
\filldraw[fill=white](0,0) circle(2pt);}$
};
\node[scale=0.8] at (0,-1.2) {
$\tikz{\draw(0,0) circle(1cm);\foreach \i in {80,260} \fill(\i:1) circle(1.5pt);
\draw[red,thick,->-](0,-1)--(0,0) node[below left]{$+$};\filldraw[fill=white](0,0) circle(2pt);}$
};
\node[scale=0.8] at (0,-3.2) {
$\tikz{\draw(0,0) circle(1cm);\foreach \i in {80,260} \fill(\i:1) circle(1.5pt);
\draw[red,thick,-<-](0,-1)--(0,0) node[above left]{$+$};\filldraw[fill=white](0,0) circle(2pt);}$
};
\draw[thick,->] (2.8,0) --node[midway,above]{$\mu_3$} ++(1,0);

\begin{scope}[xshift=6.5cm]
\node[scale=0.8] at (0,3.2) {
$\tikz{\draw(0,0) circle(1cm);\foreach \i in {80,260} \fill(\i:1) circle(1.5pt);
\draw[red,thick,-<-](0,1)--(0,0.4);\draw[red,thick,->-] (0,0.4) to[out=0,in=45] (0,-1);\draw[red,thick,->-] (0,0.4) to[out=180,in=135] (0,-1);
\filldraw[fill=white](0,0) circle(2pt);}$
};
\node[scale=0.8] at (0,1.2) {
$\tikz{\draw(0,0) circle(1cm);\foreach \i in {80,260} \fill(\i:1) circle(1.5pt);
\draw[red,thick,->-](0,1)--(0,0) node[above left]{$+$};\filldraw[fill=white](0,0) circle(2pt);}$
};
\node[scale=0.8] at (-1.5,0) {
$\tikz{\draw(0,0) circle(1cm);\foreach \i in {80,260} \fill(\i:1) circle(1.5pt);
\draw[red,thick,->-={0.3}{},-<-={0.7}{}] (0,1) to[bend right=50pt] node[pos=0.5,inner sep=0](A){} (0,-1);
\draw[red,thick,-<-](A)--(0,0) node[above]{$+$};
\filldraw[fill=white](0,0) circle(2pt);}$
};
\node[scale=0.8] at (1.5,0) {
$\tikz{\draw(0,0) circle(1cm);\foreach \i in {80,260} \fill(\i:1) circle(1.5pt);
\draw[red,thick,->-={0.3}{},-<-={0.7}{}] (0,1) to[bend left=50pt] node[pos=0.5,inner sep=0](A){} (0,-1);
\draw[red,thick,-<-](A)--(0,0) node[above]{$+$};
\filldraw[fill=white](0,0) circle(2pt);}$
};
\node[scale=0.8] at (0,-1.2) {
$\tikz{\draw(0,0) circle(1cm);\foreach \i in {80,260} \fill(\i:1) circle(1.5pt);
\draw[red,thick,->-](0,-1)--(0,0) node[below left]{$+$};\filldraw[fill=white](0,0) circle(2pt);}$
};
\node[scale=0.8] at (0,-3.2) {
$\tikz{\draw(0,0) circle(1cm);\foreach \i in {80,260} \fill(\i:1) circle(1.5pt);
\draw[red,thick,-<-](0,-1)--(0,0) node[above left]{$+$};\filldraw[fill=white](0,0) circle(2pt);}$
};
\draw[thick,->] (2.8,0) --node[midway,above]{$\mu_4$} ++(1,0);
\end{scope}

\begin{scope}[xshift=13cm]
\node[scale=0.8] at (0,3.2) {
$\tikz{\draw(0,0) circle(1cm);\foreach \i in {80,260} \fill(\i:1) circle(1.5pt);
\draw[red,thick,-<-](0,1)--(0,0.4);\draw[red,thick,->-] (0,0.4) to[out=0,in=45] (0,-1);\draw[red,thick,->-] (0,0.4) to[out=180,in=135] (0,-1);
\filldraw[fill=white](0,0) circle(2pt);}$
};
\node[scale=0.8] at (0,1.2) {
$\tikz{\draw(0,0) circle(1cm);\foreach \i in {80,260} \fill(\i:1) circle(1.5pt);
\draw[red,thick,->-](0,1)--(0,0) node[above left]{$+$};\filldraw[fill=white](0,0) circle(2pt);}$
};
\node[scale=0.8] at (-1.5,0) {
$\tikz{\draw(0,0) circle(1cm);\foreach \i in {80,260} \fill(\i:1) circle(1.5pt);
\draw[red,thick,->-={0.3}{},-<-={0.7}{}] (0,1) to[bend right=50pt] node[pos=0.5,inner sep=0](A){} (0,-1);
\draw[red,thick,-<-](A)--(0,0) node[above]{$+$};
\filldraw[fill=white](0,0) circle(2pt);}$
};
\node[scale=0.8] at (1.5,0) {
$\tikz{\draw(0,0) circle(1cm);\foreach \i in {80,260} \fill(\i:1) circle(1.5pt);
\draw[red,thick,-<-](0,1)--(0,0.4);\draw[red,thick,-<-] (0,0.4) to[out=0,in=90] (0.6,0);\draw[red,thick,->-](0.6,0)--(0,0);\draw[red,thick,->-](0.6,0) to[out=-90,in=45] (0,-1);
\draw[red,thick,->-] (0,0.4) to[out=180,in=135] (0,-1);\node[red] at (0,-0.3){$+$};
\filldraw[fill=white](0,0) circle(2pt);}$
};
\node[scale=0.8] at (0,-1.2) {
$\tikz{\draw(0,0) circle(1cm);\foreach \i in {80,260} \fill(\i:1) circle(1.5pt);
\draw[red,thick,->-](0,-1)--(0,0) node[below left]{$+$};\filldraw[fill=white](0,0) circle(2pt);}$
};
\node[scale=0.8] at (0,-3.2) {
$\tikz{\draw(0,0) circle(1cm);\foreach \i in {80,260} \fill(\i:1) circle(1.5pt);
\draw[red,thick,-<-](0,-1)--(0,0) node[above left]{$+$};\filldraw[fill=white](0,0) circle(2pt);}$
};
\end{scope}

\begin{scope}[xshift=3.4cm,yshift=-6.5cm]
\draw[thick,->] (-3.8,0) --node[midway,above]{$\mu_5$} ++(1,0);
\node[scale=0.8] at (0,3.2) {
$\tikz{\draw(0,0) circle(1cm);\foreach \i in {80,260} \fill(\i:1) circle(1.5pt);
\draw[red,thick,-<-](0,1)--(0,0.4);\draw[red,thick,->-] (0,0.4) to[out=0,in=45] (0,-1);\draw[red,thick,->-] (0,0.4) to[out=180,in=135] (0,-1);
\filldraw[fill=white](0,0) circle(2pt);}$
};
\node[scale=0.8] at (0,1.2) {
$\tikz{\draw(0,0) circle(1cm);\foreach \i in {80,260} \fill(\i:1) circle(1.5pt);
\draw[red,thick,->-](0,1)--(0,0) node[above left]{$+$};\filldraw[fill=white](0,0) circle(2pt);}$
};
\node[scale=0.8] at (-1.5,0) {
$\tikz{\draw(0,0) circle(1cm);\foreach \i in {80,260} \fill(\i:1) circle(1.5pt);
\draw[red,thick,->-={0.3}{},-<-={0.7}{}] (0,1) to[bend right=50pt] node[pos=0.5,inner sep=0](A){} (0,-1);
\draw[red,thick,-<-](A)--(0,0) node[above]{$+$};
\filldraw[fill=white](0,0) circle(2pt);}$
};
\node[scale=0.8] at (1.5,0) {
$\tikz{\draw(0,0) circle(1cm);\foreach \i in {80,260} \fill(\i:1) circle(1.5pt);
\draw[red,thick,-<-](0,1)--(0,0.4);\draw[red,thick,-<-] (0,0.4) to[out=0,in=90] (0.6,0);\draw[red,thick,->-](0.6,0)--(0,0);\draw[red,thick,->-](0.6,0) to[out=-90,in=45] (0,-1);
\draw[red,thick,->-] (0,0.4) to[out=180,in=135] (0,-1);\node[red] at (0,-0.3){$+$};
\filldraw[fill=white](0,0) circle(2pt);}$
};
\node[scale=0.8] at (0,-1.2) {
$\tikz{\draw(0,0) circle(1cm);\foreach \i in {80,260} \fill(\i:1) circle(1.5pt);
\draw[red,thick,->-](0,-1)--(0,0) node[below left]{$+$};\filldraw[fill=white](0,0) circle(2pt);}$
};
\node[scale=0.8] at (0,-3.2) {
$\tikz{\draw(0,0) circle(1cm);\foreach \i in {80,260} \fill(\i:1) circle(1.5pt);
\draw[red,thick,-<-={0.3}{},->-={0.75}{}](0,0) arc(0:-360:0.2); \draw[red,thick,-<-={0.7}{}] (-0.8,0) -- (-0.4,0);\node[red] at (0,0.3) {$+$};\node[red] at (0,-0.3) {$-$};\draw[red,thick,-<-={0.7}{}] (-0.8,0) to[out=90,in=-150] (0,1);\draw[red,thick,-<-={0.7}{}] (-0.8,0) to[out=-90,in=150] (0,-1);\filldraw[fill=white](0,0) circle(2pt);}$
};
\end{scope}

\begin{scope}[xshift=10.2cm,yshift=-6.5cm]
\draw[thick,->] (-3.8,0) --node[midway,above]{$\mu_6$} ++(1,0);
\node[scale=0.8] at (0,3.2) {
$\tikz{\draw(0,0) circle(1cm);\foreach \i in {80,260} \fill(\i:1) circle(1.5pt);
\draw[red,thick,-<-](0,1)--(0,0.4);\draw[red,thick,->-] (0,0.4) to[out=0,in=45] (0,-1);\draw[red,thick,->-] (0,0.4) to[out=180,in=135] (0,-1);
\filldraw[fill=white](0,0) circle(2pt);}$
};
\node[scale=0.8] at (0,1.2) {
$\tikz{\draw(0,0) circle(1cm);\foreach \i in {80,260} \fill(\i:1) circle(1.5pt);
\draw[red,thick,->-](0,1)--(0,0) node[above left]{$+$};\filldraw[fill=white](0,0) circle(2pt);}$
};
\node[scale=0.8] at (-1.5,0) {
$\tikz{\draw(0,0) circle(1cm);\foreach \i in {80,260} \fill(\i:1) circle(1.5pt);
\draw[red,thick,-<-={0.3}{},->-={0.75}{}](0,0) arc(90:-270:0.3); \draw[red,thick,-<-={0.7}{}] (0,-1) -- (0,-0.6);\node[red] at (-0.3,0) {$+$};\node[red] at (0.3,0) {$-$};
\filldraw[fill=white](0,0) circle(2pt);}$
};
\node[scale=0.8] at (1.5,0) {
$\tikz{\draw(0,0) circle(1cm);\foreach \i in {80,260} \fill(\i:1) circle(1.5pt);
\draw[red,thick,-<-](0,1)--(0,0.4);\draw[red,thick,-<-] (0,0.4) to[out=0,in=90] (0.6,0);\draw[red,thick,->-](0.6,0)--(0,0);\draw[red,thick,->-](0.6,0) to[out=-90,in=45] (0,-1);
\draw[red,thick,->-] (0,0.4) to[out=180,in=135] (0,-1);\node[red] at (0,-0.3){$+$};
\filldraw[fill=white](0,0) circle(2pt);}$
};
\node[scale=0.8] at (0,-1.2) {
$\tikz{\draw(0,0) circle(1cm);\foreach \i in {80,260} \fill(\i:1) circle(1.5pt);
\draw[red,thick,->-](0,-1)--(0,0) node[below left]{$+$};\filldraw[fill=white](0,0) circle(2pt);}$
};
\node[scale=0.8] at (0,-3.2) {
$\tikz{\draw(0,0) circle(1cm);\foreach \i in {80,260} \fill(\i:1) circle(1.5pt);
\draw[red,thick,-<-={0.3}{},->-={0.75}{}](0,0) arc(0:-360:0.2); \draw[red,thick,-<-={0.7}{}] (-0.8,0) -- (-0.4,0);\node[red] at (0,0.3) {$+$};\node[red] at (0,-0.3) {$-$};\draw[red,thick,-<-={0.7}{}] (-0.8,0) to[out=90,in=-150] (0,1);\draw[red,thick,-<-={0.7}{}] (-0.8,0) to[out=-90,in=150] (0,-1);\filldraw[fill=white](0,0) circle(2pt);}$
};
\end{scope}

\begin{scope}[xshift=6.5cm,yshift=-13cm]
\draw[thick,->] (-3.8,0) --node[midway,above]{$\mu_4$} ++(1,0);
\node[scale=0.8] at (0,3.2) {
$\tikz{\draw(0,0) circle(1cm);\foreach \i in {80,260} \fill(\i:1) circle(1.5pt);
\draw[red,thick,-<-](0,1)--(0,0.4);\draw[red,thick,->-] (0,0.4) to[out=0,in=45] (0,-1);\draw[red,thick,->-] (0,0.4) to[out=180,in=135] (0,-1);
\filldraw[fill=white](0,0) circle(2pt);}$
};
\node[scale=0.8] at (0,1.2) {
$\tikz{\draw(0,0) circle(1cm);\foreach \i in {80,260} \fill(\i:1) circle(1.5pt);
\draw[red,thick,->-](0,1)--(0,0) node[above left]{$+$};\filldraw[fill=white](0,0) circle(2pt);}$
};
\node[scale=0.8] at (-1.5,0) {
$\tikz{\draw(0,0) circle(1cm);\foreach \i in {80,260} \fill(\i:1) circle(1.5pt);
\draw[red,thick,-<-={0.3}{},->-={0.75}{}](0,0) arc(90:-270:0.3); \draw[red,thick,-<-={0.7}{}] (0,-1) -- (0,-0.6);\node[red] at (-0.3,0) {$+$};\node[red] at (0.3,0) {$-$};
\filldraw[fill=white](0,0) circle(2pt);}$
};
\node[scale=0.8] at (1.5,0) {
$\tikz{\draw(0,0) circle(1cm);\foreach \i in {80,260} \fill(\i:1) circle(1.5pt);
\draw[red,thick,-<-={0.3}{},->-={0.75}{}](0,0) arc(180:-180:0.2); \draw[red,thick,-<-={0.7}{}] (0.8,0) -- (0.4,0);\node[red] at (0,-0.3) {$+$};\node[red] at (0,0.3) {$-$};\draw[red,thick,-<-={0.7}{}] (0.8,0) to[out=90,in=-30] (0,1);\draw[red,thick,-<-={0.7}{}] (0.8,0) to[out=-90,in=30] (0,-1);
\filldraw[fill=white](0,0) circle(2pt);}$
};
\node[scale=0.8] at (0,-1.2) {
$\tikz{\draw(0,0) circle(1cm);\foreach \i in {80,260} \fill(\i:1) circle(1.5pt);
\draw[red,thick,->-](0,-1)--(0,0) node[below left]{$+$};\filldraw[fill=white](0,0) circle(2pt);}$
};
\node[scale=0.8] at (0,-3.2) {
$\tikz{\draw(0,0) circle(1cm);\foreach \i in {80,260} \fill(\i:1) circle(1.5pt);
\draw[red,thick,-<-={0.3}{},->-={0.75}{}](0,0) arc(0:-360:0.2); \draw[red,thick,-<-={0.7}{}] (-0.8,0) -- (-0.4,0);\node[red] at (0,0.3) {$+$};\node[red] at (0,-0.3) {$-$};\draw[red,thick,-<-={0.7}{}] (-0.8,0) to[out=90,in=-150] (0,1);\draw[red,thick,-<-={0.7}{}] (-0.8,0) to[out=-90,in=150] (0,-1);\filldraw[fill=white](0,0) circle(2pt);}$
};
\end{scope}

\begin{scope}[xshift=13cm,yshift=-13cm]
\draw[thick,->] (-3.8,0) --node[midway,above]{$\mu_3$} ++(1,0);
\node[scale=0.8] at (0,3.2) {
$\tikz{\draw(0,0) circle(1cm);\foreach \i in {80,260} \fill(\i:1) circle(1.5pt);
\draw[red,thick,-<-={0.3}{},->-={0.75}{}](0,0) arc(-90:270:0.3); \draw[red,thick,-<-={0.7}{}] (0,1) -- (0,0.6);\node[red] at (0.3,0) {$+$};\node[red] at (-0.3,0) {$-$};
\filldraw[fill=white](0,0) circle(2pt);}$
};
\node[scale=0.8] at (0,1.2) {
$\tikz{\draw(0,0) circle(1cm);\foreach \i in {80,260} \fill(\i:1) circle(1.5pt);
\draw[red,thick,->-](0,1)--(0,0) node[above left]{$+$};\filldraw[fill=white](0,0) circle(2pt);}$
};
\node[scale=0.8] at (-1.5,0) {
$\tikz{\draw(0,0) circle(1cm);\foreach \i in {80,260} \fill(\i:1) circle(1.5pt);
\draw[red,thick,-<-={0.3}{},->-={0.75}{}](0,0) arc(90:-270:0.3); \draw[red,thick,-<-={0.7}{}] (0,-1) -- (0,-0.6);\node[red] at (-0.3,0) {$+$};\node[red] at (0.3,0) {$-$};
\filldraw[fill=white](0,0) circle(2pt);}$
};
\node[scale=0.8] at (1.5,0) {
$\tikz{\draw(0,0) circle(1cm);\foreach \i in {80,260} \fill(\i:1) circle(1.5pt);
\draw[red,thick,-<-={0.3}{},->-={0.75}{}](0,0) arc(180:-180:0.2); \draw[red,thick,-<-={0.7}{}] (0.8,0) -- (0.4,0);\node[red] at (0,-0.3) {$+$};\node[red] at (0,0.3) {$-$};\draw[red,thick,-<-={0.7}{}] (0.8,0) to[out=90,in=-30] (0,1);\draw[red,thick,-<-={0.7}{}] (0.8,0) to[out=-90,in=30] (0,-1);
\filldraw[fill=white](0,0) circle(2pt);}$
};
\node[scale=0.8] at (0,-1.2) {
$\tikz{\draw(0,0) circle(1cm);\foreach \i in {80,260} \fill(\i:1) circle(1.5pt);
\draw[red,thick,->-](0,-1)--(0,0) node[below left]{$+$};\filldraw[fill=white](0,0) circle(2pt);}$
};
\node[scale=0.8] at (0,-3.2) {
$\tikz{\draw(0,0) circle(1cm);\foreach \i in {80,260} \fill(\i:1) circle(1.5pt);
\draw[red,thick,-<-={0.3}{},->-={0.75}{}](0,0) arc(0:-360:0.2); \draw[red,thick,-<-={0.7}{}] (-0.8,0) -- (-0.4,0);\node[red] at (0,0.3) {$+$};\node[red] at (0,-0.3) {$-$};\draw[red,thick,-<-={0.7}{}] (-0.8,0) to[out=90,in=-150] (0,1);\draw[red,thick,-<-={0.7}{}] (-0.8,0) to[out=-90,in=150] (0,-1);\filldraw[fill=white](0,0) circle(2pt);}$
};
\end{scope}
\end{tikzpicture}
    \caption{The sequence of lamination clusters along the mutation sequence for $r_{p,1}$.}
    \label{fig:seq_r1}
\end{figure}

\vfill

\end{document}